\documentclass[reqno,letter,11pt]{article}

\usepackage{epsf}
\usepackage{color}

\usepackage[latin1]{inputenc}
\usepackage[english]{babel}
\usepackage{amsmath,amssymb,amsfonts,amsthm,enumerate}
\usepackage{mathrsfs}

\setlength{\voffset}{-2 cm}
\setlength{\oddsidemargin}{0.5cm}
\setlength{\textwidth}{16cm}
\setlength{\textheight}{22.5cm}

\definecolor{refkeybis}{gray}{.65}
\definecolor{labelkeybis}{gray}{.65}
{\makeatletter
\def\SK@refcolor{\color{refkeybis}}%
\def\SK@labelcolor{\color{labelkeybis}}}

\numberwithin{equation}{section}

\newtheorem{theorem}{Theorem}[section]
\newtheorem{lemma}[theorem]{Lemma}

\newtheorem{remark}[theorem]{Remark}
\newtheorem{proposition}[theorem]{Proposition}
\newtheorem{corollary}[theorem]{Corollary}

\newcommand{\N}{\mathbb{N}}
\newcommand{\R}{\mathbb{R}}

\renewcommand{\S}{\mathbb{S}}
\newcommand{\Haus}[1]{{\mathscr H}^{#1}}

\newcommand{\<}{\langle}
\renewcommand{\>}{\rangle}
\renewcommand{\a}{\alpha}

\renewcommand{\d}{\delta}

\newcommand{\e}{\varepsilon}

\renewcommand{\l}{\lambda}

\newcommand{\var}{\varphi}

\renewcommand{\t}{\tau}
\renewcommand{\O}{\Omega}

\renewcommand{\i}{\infty}
\newcommand{\p}{\partial}

\newcommand{\vol}{\operatorname{vol}}

\newcommand{\dist}{\operatorname{dist}}

\newcommand{\co}{\operatorname{co}}

\newcommand{\Cut}{{\rm Cut}}

\newcommand{\cexp}{{\hbox{\rm $c$-Exp}}}
\newcommand{\cexpb}{{\hbox{\rm $\bar c$-Exp}}}
\newcommand{\tcexp}{{\hbox{\rm $\tilde c$-Exp}}}

\newcommand{\cCut}{{\hbox{\rm $c$-Cut}}}
\newcommand{\cCutb}{{\hbox{\rm $\bar c$-Cut}}}
\newcommand{\domc}{M^*}
\newcommand{\domcb}{\bar M^*}
\newcommand{\cl}{{\rm cl}}
\newcommand{\cd}{{\cdot}}
\newcommand{\cdd}{{\cdot \cdot}}
\newtheorem{assumption}[theorem]{Assumption}


\title{Regularity of optimal transport maps\\ on  multiple products of spheres
\thanks{The authors are 
grateful to
the Institute for Pure and Applied Mathematics at UCLA and the Institute for Advanced Study in Princeton
for their generous hospitality during various stages of this work.
AF is partially supported by NSF grant DMS-0969962.
RJM is supported in part by
NSERC grants 217006-08 and 
NSF grant DMS-0354729.
YHK is supported partly by 
NSF grant DMS-0635607
through the membership at Institute for Advanced Study at Princeton NJ, and also in part by NSERC grant 371642-09. Any opinions, findings
and conclusions or recommendations expressed in this material are those of authors and do not
 reflect the views of either the Natural Sciences and Engineering
Research Council of Canada (NSERC) or the United States National Science Foundation (NSF).
\hfill \copyright 2010 by the authors.
}}
\date{\today}
\author{Alessio Figalli\thanks{Department of Mathematics, The University of Texas at Austin, Austin TX, USA. {\tt figalli@math.utexas.edu}},
Young-Heon Kim\thanks{ Department of Mathematics, University of
British Columbia, Vancouver BC, Canada. {\tt yhkim@math.ubc.ca}
} \ and Robert J. McCann\thanks{Department of Mathematics,
University of Toronto, Toronto ON, Canada. {\tt
mccann@math.toronto.edu}}}

\begin{document}

\maketitle
\begin{abstract}
This article addresses regularity of optimal transport maps for
cost$=$``squared distance'' on Riemannian
manifolds that are products of arbitrarily many round spheres
with arbitrary sizes and dimensions. Such manifolds are known
to be non-negatively cross-curved \cite{KM2}. Under boundedness and non-vanishing assumptions
on the  transfered source and target densities we show that optimal maps stay away from the cut-locus (where the cost exhibits singularity), and obtain injectivity and continuity of optimal maps.  This together with the result of Liu, Trudinger and Wang \cite{LTW} also implies higher regularity ($C^{1, \a}/C^\infty$) of optimal maps  for more smooth ($C^\a/C^\infty$) densities.
These are  the first global regularity results which we are aware of concerning optimal maps on
non-flat Riemannian manifolds which possess some vanishing sectional curvatures.
Moreover, such product manifolds have potential relevance
in statistics (see \cite{S}) and in statistical mechanics
(where the state of a system consisting of many spins is classically modeled
 by a point in the phase space obtained by taking many products of spheres).
For the proof we apply  and extend the method
developed in \cite{FKM}, where we showed injectivity and continuity of optimal maps on domains
in $\R^n$ for smooth non-negatively cross-curved cost.
 The major obstacle in the present paper is to deal with the non-trivial cut-locus and the presence of flat directions.
\end{abstract}

\tableofcontents

\section{Introduction}

Let $M$ and $\bar M$ be $n$-dimensional
complete Riemannian manifolds, and let  $\mu=\rho \vol_{M}$ and $\nu=\bar\rho \vol_{\bar M}$
be two probability measures whose densities $\rho$ and $\bar \rho$ are bounded away from zero and infinity.
Given a cost function $c:M\times \bar M \to \R$, the optimal transport problem with cost $c(x,y)$
consist in finding a transport map $T:M\to \bar M$ which sends $\mu$ onto $\nu$ and minimizes the transportation cost
$$
\int_{M} c(x,T(x))\,d\mu(x).
$$
 As shown by McCann \cite{M} extending the result of Brenier \cite{B} on $\R^n$, if  $M=\bar M$ and $c=\dist^2/2$ then the optimal transport map  (or simply optimal map) exists and is
unique. More generally, the same result holds if the cost is semiconcave and satisfies the twist
condition in Assumption~\ref{A:twist}, see \cite{Levin99,MTW,FF,F}.

The optimal map  $T$ is uniquely characterized by the relation
$T(x)\in\partial^c\phi(x)$, where $\phi$ is a $c$-convex function (called \emph{potential}) and
$\partial^c\phi$ denotes its $c$-subdifferential (see Section \ref{S:notation} for the definitions).
Furthermore, the fact that $\rho$ and $\bar \rho$ are bounded away from zero and infinity ensures the existence of a constant $\l>0$ such that the following Monge-Amp\`ere type equation holds:
\begin{align*}
\l\,|\O|\leq |\p^c\phi(\O)| \leq \frac{1}{\l}\,|\O|\qquad \forall \, \O\subset M\text{ Borel},
\end{align*}
where $\p^c\phi(\O)=\cup_{x\in\O}\p^c\phi(x)$. (See for instance \cite[Lemma 3.1]{FKM}.)

The aim of this paper is to investigate the regularity issue of optimal maps  when $M=\bar M$ are multiple
product of spheres, i.e., $M=\bar M=\S^{n_1}_{r_1} \times \ldots \times \S^{n_k}_{r_k}$,
and $c(x,y)=f(\dist(x,y))$ for some function $f$,  including the case $f(t)=t^2/2$ of distance squared cost. For $k=1$ and  $f(t)=t^2/2$, smoothness of optimal maps
has been proved  by Loeper \cite{L2}. However, if $k>1$ the structure of the cut-locus  (the singular set of the cost function)
becomes more complicated,  and due to the product structure, the manifold has both flat and positively curved directions, thus  making the regularity issue much more delicate. Especially, the powerful H\"older regularity estimate of Loeper \cite{L} (see also \cite{Liu09}) as well as the a priori estimates of Ma, Trudinger and Wang \cite{MTW}, which are successfully applied to positively curved manifolds as in \cite{L2, KM2, LV, FR, DG, FRV-reg}, are not available any more in our setting. Our main results (Theorem~\ref{T:stay-away} and Corollary~\ref{C:regularity}) give the first global regularity results which  we are aware of concerning optimal maps on non-flat Riemannian manifolds which allow vanishing sectional curvature. For completely flat manifolds (with $c=dist^2/2$) the regularity of optimal maps is known as it reduces to the regularity theory of the classical Monge-Amp\'ere equation \cite{D, caff-loc, caffC1a, C, U, C2, Co, D2, gutierrez}.

 To describe our result more precisely,
first recall that in \cite{MTW}
Ma, Trudinger and Wang discovered condition {\bf (A3)} on the cost function,  whose weaker
variant {\bf (A3w)} \cite{TW}
turned out to be both necessary \cite{L} and sufficient \cite{TW}
for regularity when the solution $\phi$
is known to be {\em strictly} $c$-convex and the cost function is smooth.
When $M=\bar M=\S^{n}_{r}$, the particular structure of the cut-locus (for every point $x$, its cut-locus $\Cut (x)$ consists of its antipodal point)
allowed Delano\"e and Loeper \cite{DL} to deduce
that  optimal maps \textit{stay away} from cut-locus, namely,  $\p^c\phi(x)\cap \Cut(x)=\emptyset$ for all $x \in M$
; see   \cite{L2,DG, KM1, KimMcCannAppendices}  for alternate approaches.
Loeper \cite{L2} combined this observation with the fact that $c=\dist^2/2$ satisfies
{\bf (A3)} to show regularity of optimal maps;
for a simpler approach to continuity, see  \cite{KM1, KimMcCannAppendices}.  His result has been extended to variety of positively curved manifolds including the complex projective space \cite{KM2} and perturbation of the real projective space \cite{LV} and of sphere \cite{FR, DG, FRV-reg}, all of where {\bf (A3)} holds thus the strong H\"older regularity estimate of \cite{L} as well as the a priori estimate of \cite{MTW} applies.  Note that {\bf (A3)} (resp. {\bf (A3w)}) forces the sectional curvature to be positive (resp. nonnegative) \cite{L}, though the converse does not hold \cite{K}. 

On multiple products of spheres, taking $c=\dist^2/2$ leads to two main issues: first, only a degenerate strengthening of the
weak Ma-Trudinger-Wang condition holds (the so-called \textit{non-negative cross-curvature} condition in \cite{KM1, KM2}),
which although stronger than {\bf (A3w)} is not as useful as
{\bf (A3)} for proving regularity  due to lack of powerful estimates; (neither non-negative cross-curvature nor {\bf (A3)}
implies the other,  though either one separately implies {\bf (A3w)}).
Moreover, the cut-locus now has a non-trivial structure, which makes it much more difficult to
understand whether the stay-away property holds.
In \cite{FKM} we showed strict $c$-convexity and $C^1$ regularity of $\phi$,  or equivalently, injectivity and continuity of $T$,   when the cost
is smooth and non-negative cross-curvature holds.
Hence the only question left is whether $\p^c\phi$ avoids the cut-locus or not.

In this paper we answer this question positively:
by taking advantage of the fact that the cut-locus is given by  the union of certain sub-products of spheres
we prove in Theorem~\ref{T:stay-away}  the stay-away property that $\p^c\phi(x)\cap \Cut(x)=\emptyset$ for all $x \in M$. By compactness, these two sets are separated by a uniform distance that is dependent  on $\lambda$, but independent of the particular choice of $\phi$ and $x$; see Corollary~\ref{C:uniform stay-away}.  Once stay-away property is shown, one can localize the argument of \cite{FKM}
to obtain injectivity and continuity of the optimal map; then higher regularity  ($C^{2,\a}/C^\infty$) of $\phi$, thus $C^{1,\a}/C^\infty$-regularity of $T$, follows from
\cite{LTW} when the densities are smooth  ($C^\a / C^\infty$); see Corollary~\ref{C:regularity}. 

The multiple products of spheres is a model case for more general manifolds on which  the cost $c$ satisfies the necessary conditions \cite{L, FRV-reg} for regularity of optimal transport maps. The method we develop in this paper demonstrates one approach to handling complex singularities of the cost, especially the stay-away property of optimal maps.  Moreover, a general Alexandrov type estimate (Lemma~\ref{L:left Alex}) is obtained which has applications  beyond the products of spheres.

Our regularity result has potential relevance to statistics and
statistical mechanics. 
 For instance, recently T. Sei applied optimal transport theory for $c=dist^2$ to directional
statistics on the sphere. In his main result \cite[Theorem 1]{S}, he needed the
optimal map not to touch the cut-locus. Now, our stay-away property on
multiple products of  spheres  $M$ (Theorem~\ref{T:stay-away}) states that all optimal maps, obtained by transporting
 densities bounded away from zero and infinity onto each other, satisfy
this assumption. Hence, this provides a large family of $c$-convex
potentials that  could be used to create log-concave likelihood functions
as in \cite[Subsection 3.2]{S}, extending his theory to multiple products of spheres. Namely, as a direct consequence of \cite{KM2, FigalliKimMcCann-econ09p, S}, on multiple products of  spheres  a convex combination $\phi=\sum_{i=1}^k s_i \phi_i$, $s_i \ge 0, \sum s_i =1$, of $c$-convex functions $\phi_i$ is again $c$-convex,  thus a crucial requirement  in Sei's theory is satisfied. If  each $\phi_i$ is the $c$-potential of an optimal map between densities bounded away from zero and infinity,  by Theorem~\ref{T:stay-away} one sees  $\partial^c \phi_i$ stay away from  the cut-locus. One then can show that $\partial^c \phi$ also avoids the cut-locus, thus applying \cite[Theorem 1]{S} one obtains the log-concave  Jacobian inequality for this convex combination. To see this, for example, observe that  in the product of spheres the domain of exponential map is convex and  $\partial^c \phi$ satisfies $ \partial^c \phi(x) = \exp_x \partial \phi (x) = \exp_x \big{[}\sum_i \partial \phi_i (x) \big{]})$ for $x \in M$ (see Lemma~\ref{L:DASM for c-sub}). Since each $\partial^c \phi_i$ stays away from the  cut-locus, $\partial \phi_i (x)$ belongs to the domain of exponential map, so does $\partial \phi(x)$, showing $\partial^c \phi(x) \cap \Cut(x) = \emptyset$.

Concerning statistical mechanics, let us recall that the state of a spin system
is classically modeled as a point in the phase space  $M$ obtained by taking many products of spheres.
In such contexts, optimal transport may provide a useful change of variables.
More precisely, if $\mu$ and $\nu$ are two smooth densities and $T$ denotes the optimal transport
map from $\mu$ to $\nu$, then
$$
\int G(y) \,d\nu(y)=\int G(T(x))\,d\mu(x),
$$
for all bounded measurable functions $G:M \to \R$.
Then, if $\mu$ is a ``nice'' measure for which many statistical quantities are easily computable,
one may hope to exploit some qualitative/quantitative
properties of $T$ in order to estimate the integral
$\int G(y) \,d\nu(y)$ by studying $\int G(T(x))\,d\mu(x)$. 
We expect that regularity of optimal maps may play a crucial role in this direction.  For instance, in Euclidean spaces this is already the case, as Caffarelli \cite{caffGauss} used regularity of optimal maps to show that
suitable monotonicity and log-concavity properties of the densities imply
monotonicity and  contraction properties for  the optimal map, from which correlation and momentum inequalities may be deduced. 
\\

{\bf Organization of the paper:} 
Section~\ref{S:notation} sets up the notation and assumptions used
throughout the paper. In Section~\ref{S:prelim}, a few useful
preliminary results regarding convex sets and $c$-convex functions
are listed. Section~\ref{S:left Alex} is devoted to an Alexandrov
type inequality which is one of the main tools in the proof of our
main theorem. Until Section~\ref{S:left Alex}, we present the
theory under rather general assumptions. However, from
Section~\ref{S:setting spheres} we restrict to the multiple
products of spheres. In Section~\ref{S:setting spheres} we state
our main result about the stay-away property
of optimal maps, and give a sketch of the proof. Moreover we
explain how one can deduce regularity of optimal maps combining
this theorem with the results in \cite{FKM} and \cite{LTW}.
Finally, the details of the proof of the stay-away property are
given in Section~\ref{S:proof}.\\

{\bf Acknowledgement:} The authors are pleased to thank Neil Trudinger,
Tom Spencer, and C\'edric Villani for useful discussions.

\section{Notation and assumptions}\label{S:notation}
In this section and the next we recall notation and results which will be useful
in the sequel. Many of these results originated in or were inspired by
the work of Ma, Trudinger, Wang \cite{MTW} and Loeper \cite{L}.
Though the present paper mainly concerns the Riemannian distance squared cost $c=\dist^2/2$
on the product of round spheres, we will present our work in a rather general framework.
It requires only a small additional effort and may prove useful for further
development and applications of the theory.

Let $M$, $\bar M$ be $n$-dimensional complete Riemannian
manifolds, and let $c(x, \bar x)$ denote a cost function $c: M \times
\bar M \to \R$.
We will assume through the whole paper that $c$ is semiconcave in both variables, i.e., in  coordinate charts it can be
written as the sum of a concave and a smooth function. Let us remark that
since $\dist^2(x,y)$ is semiconcave on $M\times M$ (see for example \cite[Appendix B]{FF}), the above assumption is satisfied for instance by any cost function of the form
$f(\dist(x,y))$ on $M\times M$, with $f :\R\to \R$ smooth, even, and strongly convex
(meaning $f(d) = f(-d)$ and $f''(d)>0$ for all $d \ge 0$). Here and in the sequel
we use smooth as a synonymous of $C^\infty$ (though $C^4$ would be enough for all our purposes).

As for $\bar x$ and $\bar M$, we use the ``bar'' notation to specify
the second variable of the cost function. Also as a notation we use $\bar c
(\bar x, x) := c(x, \bar x)$. We denote by $D_x$ and $D_{\bar x}$ the
differentials with respect to the $x$ and $\bar x$ variable respectively.
(For instance, $D_x D_{\bar x} c(x_0,\bar x_0)$ denotes the
mixed partial derivative of $c$ at $(x_0,\bar x_0)$.) Let $\cCut (\bar x)$ denote the
\emph{$c$-cut-locus} of $\bar x \in \bar M$, that is
\begin{align*}
\cCut (\bar x ) := \{ x \in M \ | \ \hbox{$c$ is not smooth in a neighborhood of $(x, \bar x)$} \},
\end{align*}
and let $M(\bar x)$ denote the \emph{$c$-injectivity locus} $M \setminus \cCut(\bar x)$. Define $\cCutb (x)$, $\bar M (x)$ similarly.
These sets are open.

\begin{assumption}[\bf twist]\label{A:twist}
For each $(x, \bar x) \in M\times \bar M$, the maps $-D_xc
(x, \cdot) : \bar M (x) \to T^*_x M$ and $-D_{\bar x}c (\cdot, \bar x)
: M (\bar x) \to T^*_{\bar x} \bar M$ are smooth embeddings (thus
injective).
\end{assumption}
We remark that the above hypothesis from Levin \cite{Levin99} is equivalent to
condition {\bf (A1)} in \cite{MTW,L,KM1}, which together with the semiconcavity of the cost
ensures existence and uniqueness of optimal maps when
the source measure is absolutely continuous with respect to the volume measure
(see for instance \cite{Levin99,FF,F} or
\cite[Chapter 10]{V}).

The \emph{domain of the $c$-exponential} $\domcb(\bar x)$ in
$T^*_{\bar x } \bar M$ is defined as the image of $M(\bar x)$
under the map $-D_{\bar x}c(x, \cdot)$, i.e.,
\begin{align*}
\domcb(\bar x):= -D_{\bar x} c( M(\bar x),  \bar x) \subset T_{\bar
x}^*\bar M.
\end{align*}
Define $\domc(x)$ similarly.

As in  \cite{MTW, L},
we define the \emph{$c$-exponential maps} $\cexp_x : \domc(x) \subset T^*_x M
\to \bar M$ and $\cexpb_{\bar x}: \domcb (\bar x) \subset
T^*_{\bar x} \bar M \to M $ as the inverse maps of $-D_xc(x, \cdot)$
and $- D_{\bar x} c(\cdot, \bar x)$ respectively, i.e.,
\begin{align*}
 p=- D_x c(x, \cexp_x p) \hbox{ for $p \in \domc(x)$}, \qquad \bar p= - D_{\bar x} c(\cexpb_{\bar x} \bar p , \bar x) \hbox{ for $\bar p \in \domcb(\bar x)$}.
\end{align*}

Given a set $X$, we denote by $\cl (X)$ its closure.
 Define the subdifferential of a semiconvex funciton $\a : M \to \R$ at $x \in M$ by
\begin{align*}
\partial \a (x) := \{ p \in T^*_x M \ |& \
\a(\exp_x v)-\a(x) \ge \langle p, v \rangle + o(|v|_x) \quad {\rm as}\ v \to 0\ {\rm in}\ T_x M \}
\end{align*}
(This is  non-empty at every point.)
Here $\langle \cdot, \cdot \rangle$ denotes the paring of covectors and vectors.
\begin{assumption}\label{A:c-exp} For each $(x, \bar x) \in M\times \bar M$ the map $\cexp_x$ (resp. $\cexpb_{\bar x}$) extends to a smooth map from
$\cl (\domc(x))$ (resp. $\cl (\domcb(\bar x))$) \emph{onto} $\bar M$ (resp. $M$).  If we abuse notation to use $\cexp_x$, $\cexpb_{\bar x}$ to denote these extensions, then they satisfy  
\begin{align*}
\cexp_x p  & = \cexp_x \Big{(} \partial_x \big{(}-c(x, \cexp_x p) \big{)} \Big{)}, \forall p \in \cl(\domc(x)); \\
\cexpb_{\bar x} \bar p  & = \cexpb_{\bar x} \Big{(} \partial_{\bar x}\big{(}- c(\cexp_{\bar x} \bar p, \bar x) \big{)} \Big{)}, \forall \bar p \in \cl(\domcb(\bar x)).
\end{align*}
Here, $\partial_x$, $\partial_{\bar x}$ denote the subdifferentials with respect to the variables $x$, $\bar x$, respectively. 
\end{assumption}

Note that the above assumptions hold for instance when $M=\bar M$ and
$c=\dist^2/2$ (so that $\cexp_x$ coincides with the Riemannian
exponential map $\exp_x$). However, the following three assumptions
are much more restrictive, and not true for $c=\dist^2/2$ in
general \cite{MTW, L, KM1, LV}. They are  all crucial in this paper.

\begin{assumption}[\bf convexity of domains of $c$-exponentials]\label{A:domain convex}
For each $(x, \bar x) \in M\times \bar M$ the domains $\domc (x)$, $\domcb (\bar x)$ are convex.
\end{assumption}

As shown in \cite{FRV-reg}, the above assumption is necessary for continuity of optimal 
transport maps when the cost function is given by the squared distance.

A \emph{$c$-segment $\{\bar x(t)\}_{0 \le t \le 1}$ with respect to $x$} is the $c$-exponential image of a line segment in  $\cl ( \domc(x))$, i.e.,
\begin{align*}
\bar x(t):= \cexp_x ( (1-t) p_0 + t p_1 ) , \qquad \hbox{ for some
$p_0, p_1 \in T^*_x M$ }.
\end{align*}
Define similarly a $\bar c$-segment $\{ x(t)\}_{0 \le t \le 1}$ with
respect to $\bar x$. The notions of $c$- and $\bar c$-segments, due to Ma, Trudinger and Wang,
induce a  natural extension of the notion of convexity on sets in $M$, $\bar M$
called \textit{$c$-convexity} in \cite{MTW}. Let $U \subset
M$, $\bar x \in \bar M$. The set $U$ is said to be \emph{$\bar
c$-convex with respect to $\bar x$} if any two points in $U$ are
connected by a $\bar c$-segment with respect to $\bar x$ entirely
contained inside $U$. Similarly we define $ c$-convex sets in
$\bar M$. It is helpful to notice that $c$, $\bar c$-convex sets
(with respect to $x$, $\bar x$, respectively) are images of convex
sets under $\cexp_x$, $\cexpb_{\bar x}$, respectively.

Regarding $c$, $\bar c$-segments, here comes a key assumption in this paper:
\begin{assumption}[{\bf convex DASM}]\label{A:convex DASM} For every $(x, \bar x) \in M \times \bar M$,
let $\{\bar x(t)\}_{0 \le t \le 1}$, $\{ x(t) \}_{0\le t \le 1}$ be $c$, $\bar c$-segments with respect to $x$, $\bar x$, respectively. Define the functions
\begin{align*}
m_t (\cdot) := -c(\cdot, \bar x(t) ) + c(x, \bar x(t)), \qquad
\bar m_t (\cdot) := -c(x(t), \cdot ) + c(x(t),\cdot ), \qquad 0\le
t\le 1.
\end{align*}
Then
\begin{align}\label{E:convex DASM}
m_t  \le  (1-t) m_0 + t \, m_1, \qquad \bar m_t \le (1-t) \bar m_0 + t \, \bar m_1 \qquad 0 \le t \le 1.
\end{align}
\end{assumption}
\noindent
{When, instead of
\eqref{E:convex DASM}, only $m_t \le \max [ m_0, m_1]$ and $\bar
m_t \le \max [ \bar m_0, \bar m_1]$ are required, this property played
a key role in the work of Loeper \cite{L}. In \cite{KM1} we called it
{\em Loeper's maximum principle} ({\bf DASM}),
the acronym ({\bf DASM}) standing for ``Double Mountain Above Sliding Mountain'', a mnemonic which
describes how the graphs of the functions $m_{t}$, $\bar m_{t}$ behave as $t$ is varied.
For convenience we use this acronym in various places in the present paper.
The stronger property ({\bf convex  DASM}) was proved in \cite{KM2} to be a consequence of
the so-called nonnegative cross-curvature condition on the cost $c$.

We will also need a strict version of Loeper's maximum principle ({\bf DASM}):}
\begin{assumption}[{\bf DASM$^+$}]\label{A:DASM+}
With the same notation as in Assumption \ref{A:convex DASM},
$$
m_t(y) \le \max [ m_0(y), m_1(y)]\quad \forall \,y \in M, \qquad
\bar m_t(\bar y) \le \max [ \bar m_0(\bar y), \bar m_1(\bar y)] \quad \forall\,\bar y \in \bar M.
$$
Moreover,  when the $\bar c$-(resp. $c$-)segument in the definition of $m_t$ (resp. $\bar m_t$) is nonconstant, the equality holds if and only if $y=x$ (resp. $\bar y=\bar x$).
\end{assumption}
Assumptions \ref{A:convex DASM} and \ref{A:DASM+} correspond to a ``global''
version of the non-negative cross curvature assumption and of the {\bf (A3)} condition of the cost function $c$, respectively: see \cite{KM1} and
\cite{MTW} for the definition of nonnegative cross curvature and {\bf (A3)}, respectively.
Although the equivalence between ({\bf convex  DASM}) and non-negative cross curvature (resp. ({\bf DASM$^+$}) and {\bf (A3)}) is not known in general,
it holds true for  the squared distance cost function on a Riemannian manifold,
as shown in \cite{FV,FRV-reg}. Moreover,   Loeper's maximum principle ({\bf DASM}) is a necessary condition for regularity: this is originally shown \cite{L} on domains in $\R^n$ and later extended to the manifold case \cite{FRV-reg}.

Given two functions $\phi:M\to \R$ and $\bar \phi:\bar M\to \R$, we say that they are \emph{$c$-convex}  and dual with respect to each other if
\begin{align}\label{E:c-convex}
\phi(x) &= \sup_{\bar x \in \bar M} \{- c(x, \bar x) -\bar \phi (\bar x)\},\\\nonumber
\bar \phi(\bar x) &= \sup_{x \in M} \{- c(x, \bar x) -\phi (x)\}=\sup_{x \in M} \{- \bar c(\bar x, x) -\phi (x)\}.
\end{align}
Since by assumption $c$ is semiconcave, both functions above are semiconvex (see for instance \cite[Appendix A]{FF}).
This implies in particular that their subdifferentials, $\partial \phi (x)$, $\p\bar \phi (\bar x)$ 
are non-empty at every point.

We define the \emph{$c$-subdifferential} $\partial^c \phi$ at a point $x$ as follows:
\begin{align}\label{subdiff}
\partial^c \phi (x) := \{ \bar x \in \bar M \ | \ \phi (y) - \phi (x) \ge  - c(y, \bar x) + c(x, \bar x), \ \forall\, y \in M \}.
\end{align}
Analogously, we define $\partial^{\bar c} \bar \phi$ at every point $\bar x$. (Recall that $\bar c$ denotes
the function defined as $\bar c(\bar x, x) := c(x, \bar x)$.)
The following {well-known} reciprocity holds: 
\begin{lemma}[\bf Reciprocity]\label{L:reci} For $c$-convex functions $\phi$, $\bar \phi$ dual to each other as in \eqref{E:c-convex},
\begin{align}\label{reci}
\bar x \in \partial^c \phi (x)  \Longleftrightarrow 
\phi(x) + \bar\phi(\bar x) = -c(x, \bar x)
\Longleftrightarrow x \in \partial^{\bar c} \bar \phi (\bar x).
\end{align}
\end{lemma}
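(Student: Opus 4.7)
The plan is to prove the first equivalence
$\bar x \in \partial^c\phi(x) \Longleftrightarrow \phi(x) + \bar\phi(\bar x) = -c(x,\bar x)$;
the second equivalence then follows by an identical argument with the roles of $(M,\phi,c)$ and $(\bar M,\bar\phi,\bar c)$ interchanged, using that the two suprema in \eqref{E:c-convex} are completely symmetric in this sense thanks to $\bar c(\bar x,x)=c(x,\bar x)$.

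For the forward implication, I would rewrite the defining inequality \eqref{subdiff} as
\[
-c(y,\bar x)-\phi(y) \;\le\; -c(x,\bar x)-\phi(x) \qquad \forall\, y\in M,
\]
and take the supremum over $y\in M$. By the duality formula for $\bar\phi$ in \eqref{E:c-convex}, the left-hand side becomes $\bar\phi(\bar x)$, giving $\phi(x)+\bar\phi(\bar x)\le -c(x,\bar x)$. For the matching lower bound, I would evaluate the supremum defining $\phi(x)$ in \eqref{E:c-convex} at the particular point $\bar x$, which yields $\phi(x)\ge -c(x,\bar x)-\bar\phi(\bar x)$. Combining the two inequalities yields the desired equality.

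For the backward implication, given $\phi(x)+\bar\phi(\bar x)=-c(x,\bar x)$, I would use the trivial upper bound $\bar\phi(\bar x)\ge -c(y,\bar x)-\phi(y)$, valid for every $y\in M$ directly from the definition of $\bar\phi$ in \eqref{E:c-convex}, and substitute the equality to obtain
\[
-c(x,\bar x)-\phi(x)\;\ge\;-c(y,\bar x)-\phi(y)\qquad\forall\, y\in M,
\]
which after rearrangement is precisely \eqref{subdiff}, proving $\bar x\in \partial^c\phi(x)$.

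This is essentially a bookkeeping exercise in $c$-Legendre duality, and I do not anticipate any real obstacle: no hypothesis beyond the duality relations \eqref{E:c-convex} and the definition \eqref{subdiff} is actually used (in particular neither semiconcavity of $c$ nor any of Assumptions \ref{A:twist}--\ref{A:DASM+} enter). The one subtlety to keep in mind is that in the forward direction both equations in \eqref{E:c-convex} are invoked---one for the upper bound on $\phi(x)+\bar\phi(\bar x)$ and the other for the lower bound---so the hypothesis that $\phi$ and $\bar\phi$ are mutually $c$-conjugate, rather than merely that one is the $c$-transform of the other, is genuinely needed.
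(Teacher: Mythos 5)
Your proposal is correct and follows essentially the same route as the paper: the forward direction combines the supremum identity for $\bar\phi$ (from the subdifferential inequality) with the lower bound obtained by evaluating the supremum defining $\phi$ at $\bar x$, and the remaining implications are handled by the symmetry $\bar c(\bar x,x)=c(x,\bar x)$. The only cosmetic difference is that the paper closes the cycle $\bar x\in\partial^c\phi(x)\Rightarrow\text{equality}\Rightarrow x\in\partial^{\bar c}\bar\phi(\bar x)$ and then invokes symmetry, whereas you prove both directions of the first equivalence directly and defer the second equivalence to symmetry; the underlying manipulations are the same.
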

\begin{proof}
Suppose $\bar x \in \partial^c \phi (x)$. Then, by rearranging the inequality in \eqref{subdiff} we get
$$
-\phi (x) \ge c(x, \bar x) +\sup_{y \in M} \{ -c(y, \bar x) -\phi (y)\},
$$
and the supremum on the right hand side is exactly $\bar \phi (\bar
x)$. On the other hand, from the definition of $\phi$ and $\bar \phi$ we have
$$
\bar \phi (\bar y) + c(x, \bar y) \ge - \phi (x)
\qquad \forall\,\bar y \in \bar M,
$$
so that combining these two inequalities leads to $\phi(x) + \bar\phi(\bar x) = -c(x, \bar x)$,  and  $x \in \partial^{\bar c} \bar \phi (\bar x)$. The opposite implication follows by symmetry.
\end{proof}

Loeper \cite{L} deduced the following fundamental relation to be a consequence
of his maximum principle ({\bf DASM}).

\begin{lemma}[\bf Loeper's maximum principle ({\bf DASM})]\label{L:DASM for c-sub}
Let Assumptions~\ref{A:twist}, \ref{A:c-exp} and \ref{A:domain convex} hold. Suppose {\em Loeper's maximum principle ({\bf DASM})} holds. Let  $\phi$, $\bar \phi$ be $c$-convex functions dual to each other as in \eqref{E:c-convex}.
Then  for all $x \in M, \bar x \in \bar M$, 
$$
\cexp_{x}(\partial \phi (x) ) = \partial^c \phi (x), \qquad
\cexpb_{\bar x}(\partial \bar \phi (\bar x)) = \partial^{\bar c} \bar \phi (\bar x).
$$
\end{lemma}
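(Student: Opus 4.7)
The plan is to establish the first identity, $\cexp_x(\partial \phi(x)) = \partial^c \phi(x)$; the companion identity for $\bar\phi$ then follows from the symmetric set-up after interchanging the roles of $M$ and $\bar M$. My strategy is to introduce the auxiliary set
\[
V(x) := \cexp_x^{-1}(\partial^c \phi(x)) \cap \cl(\domc(x))
\]
in $T^*_x M$, and to prove $V(x) = \partial \phi(x)$ as closed convex sets; the desired equality then follows by applying $\cexp_x$, since by Assumption~\ref{A:c-exp} the map $\cexp_x$ sends $\cl(\domc(x))$ onto $\bar M$.

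I would first establish the containment $V(x) \subseteq \partial \phi(x)$, which simultaneously yields the easier inclusion $\partial^c \phi(x) \subseteq \cexp_x(\partial \phi(x))$. Given $p \in V(x)$ with $\bar x := \cexp_x p \in \partial^c \phi(x)$, Assumption~\ref{A:c-exp} places $p$ in the subdifferential $\partial_x(-c(x,\bar x))$, so that for $y = \exp_x v$ one has $-c(y,\bar x) + c(x,\bar x) \geq \langle p, v\rangle + o(|v|_x)$. Combining this with $\phi(y) - \phi(x) \geq -c(y,\bar x) + c(x,\bar x)$ (which holds since $\bar x \in \partial^c \phi(x)$) gives $\phi(y) - \phi(x) \geq \langle p, v\rangle + o(|v|_x)$, hence $p \in \partial \phi(x)$.

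The next step is to show $V(x)$ is convex, and here Loeper's maximum principle (DASM) enters essentially. For $p_0, p_1 \in V(x)$ with $\bar x_i := \cexp_x p_i \in \partial^c \phi(x)$, the segment $p_t := (1-t)p_0 + t p_1$ stays in $\cl(\domc(x))$ by Assumption~\ref{A:domain convex}, so $\bar x_t := \cexp_x p_t$ is a $c$-segment with respect to $x$. Setting $m_t(y) := -c(y,\bar x_t) + c(x,\bar x_t)$, DASM gives $m_t \leq \max[m_0, m_1]$ pointwise on $M$. Since $\bar x_i \in \partial^c \phi(x)$ yields $m_i \leq \phi - \phi(x)$ for $i = 0, 1$, we deduce $m_t \leq \phi - \phi(x)$ and hence $\bar x_t \in \partial^c \phi(x)$, showing $p_t \in V(x)$.

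For the reverse inclusion $\partial \phi(x) \subseteq V(x)$, I would invoke the standard convex-analytic fact that a closed convex set in $T^*_x M$ is determined by its support function $v \mapsto \sup_{p}\langle p, v\rangle$; it therefore suffices to show that $V(x)$ and $\partial \phi(x)$ share the same support function, both equal to the one-sided directional derivative $\phi'(x;v) = \lim_{t\to 0^+} t^{-1}[\phi(\exp_x tv) - \phi(x)]$. The identity for $\partial \phi(x)$ is standard semiconvex calculus. For $V(x)$ the lower bound on $\phi'(x;v)$ follows from $V(x) \subseteq \partial \phi(x)$, while the matching upper bound is obtained by picking $\bar x_t \in \partial^c \phi(\exp_x tv)$ attaining the sup in the $c$-convex representation of $\phi(\exp_x tv)$, writing $\phi(\exp_x tv) - \phi(x) \leq -c(\exp_x tv, \bar x_t) + c(x,\bar x_t)$, dividing by $t$, and passing to the limit $t \to 0^+$ along a subsequence with $\bar x_t \to \bar x_* \in \partial^c \phi(x)$ (guaranteed by compactness of $\bar M$ and closedness of the graph of $\partial^c \phi$). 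The main obstacle is to handle limit points $\bar x_* \in \partial^c \phi(x) \cap \cCutb(x)$ where $c$ fails to be smooth at $(x,\bar x_*)$: one must select the $c$-exponential preimages carefully and again use the subdifferential compatibility built into Assumption~\ref{A:c-exp} to extract a bona fide element of $V(x)$ realising the directional derivative in the limit.
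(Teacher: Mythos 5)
Your skeleton (introduce $V(x) := \cexp_x^{-1}(\partial^c\phi(x)) \cap \cl(\domc(x))$, prove $V(x)\subseteq\partial\phi(x)$, prove $V(x)$ convex via \textbf{(DASM)}, then prove $\partial\phi(x)\subseteq V(x)$) is the right strategy and matches the logic the paper compresses into two sentences with a citation to Loeper \cite{L}. Steps one through three are correct: the reverse inclusion indeed uses only Assumption~\ref{A:c-exp} and the definition \eqref{subdiff}, and the convexity of $V(x)$ is exactly the content of \textbf{(DASM)} together with Assumption~\ref{A:domain convex}. The difference is in your fourth step, and there is a genuine gap there.

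Where the paper (following Loeper) proves $\partial\phi(x)\subseteq V(x)$ by using the structure theorem for subdifferentials of semiconvex functions --- $\partial\phi(x)$ equals the closed convex hull of reachable gradients $\lim_k\nabla\phi(y_k)$ with $y_k\to x$ at differentiability points $y_k$, and at such $y_k$ the unique $\bar y_k\in\partial^c\phi(y_k)$ automatically avoids $\cCutb(y_k)$, so $\nabla\phi(y_k)=-D_xc(y_k,\bar y_k)$ exists and one passes to the limit cleanly --- you instead try to match support functions directly at $x$. The obstruction you flag is not a technicality: as written, you pick an \emph{arbitrary} $\bar x_t\in\partial^c\phi(\exp_x tv)$, and nothing forces $\bar x_t$ to stay outside $\cCutb(\exp_x tv)$ or $\cCutb(x)$; consequently the quotient $[-c(\exp_x tv,\bar x_t)+c(x,\bar x_t)]/t$ need not converge to $\langle p_*,v\rangle$ for an identifiable $p_*\in V(x)$, because $-c(\cdot,\bar x_t)$ may fail to be $C^1$ along the geodesic segment. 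The phrase ``select the $c$-exponential preimages carefully'' and an appeal to Assumption~\ref{A:c-exp} is precisely the step that has to be made rigorous, and the lemma is false without it: one really does need an element of $\cl(\domc(x))$, not merely of $\partial_x(-c(x,\bar x_*))$, since the latter can be strictly larger (e.g.\ on $\S^n$ the subdifferential of $-\dist^2(\cdot,-x)/2$ at $x$ is the closed ball $\bar B_\pi$, while $\cl(\domc(x))$ contributes only its boundary sphere).

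The repair is standard and short: restrict $t$ to the full-measure set along $s\mapsto\exp_x sv$ where $\phi$ is differentiable at $\exp_x tv$; then $\partial^c\phi(\exp_x tv)=\{\bar x_t\}$ is a singleton with $\bar x_t\notin\cCutb(\exp_x tv)$, so $p_t:=-D_xc(\exp_x tv,\bar x_t)$ is well defined and lies in $\domc(\exp_x tv)$. Semiconcavity of $c$ (which you have by hypothesis) gives $-c(\exp_x tv,\bar x_t)+c(x,\bar x_t)\le\langle p_t, tv_t'\rangle + Ct^2$, with $v_t'\to v$, uniformly in $t$; extracting convergent subsequences $p_t\to p_*$, $\bar x_t\to\bar x_*$, and using closedness of the graph of $\partial^c\phi$ plus continuity of the extended $c$-exponential in both arguments (implicit in Assumption~\ref{A:c-exp}), one obtains $p_*\in\cl(\domc(x))$, $\cexp_x p_*=\bar x_*\in\partial^c\phi(x)$, hence $p_*\in V(x)$, and $\langle p_*,v\rangle\ge\phi'(x;v)$. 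With that patch your proof closes; without it the support-function route does not go through.
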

\begin{proof}
  
The inclusions $\cexp_{x}(\partial \phi (x) ) \subset \partial^c \phi (x)$, $\cexpb_{\bar x}(\partial \bar \phi (\bar x)) \subset \partial^{\bar c} \bar \phi (\bar x)$ follow from the convexity of $\partial \phi (x)$ and the definition of {\em Loeper's maximum principle} ({\bf DASM}).
The other inclusions hold in general without Loeper's maximum principle.  Details can be found in \cite{L}.
 \end{proof}
\noindent In the following, we refer the conclusion of this lemma also as Loeper's maximum principle ({\bf DASM}).

For a set $\Omega \subset M$, the image $\partial^c \phi (\Omega)$ is defined as
\begin{align*}
\partial^c \phi (\Omega) := \bigcup_{ x \in \Omega} \partial^c \phi (x).
\end{align*}
For a $c$-convex function $\phi$ and an open set $U \in M$ with
$x_0 \in U$, we define the set $[\partial^c \phi (U)]_{x_0} \subset
\bar M$ as
\begin{align*}
 [\partial^c \phi (U)]_{x_0} := \{ \bar x \in \bar M \ | \ \phi(x)-\phi(x_0) \ge -c(x, \bar x) + c(x_0, \bar x) \hbox{ for all $x \in \partial U$} \}.
\end{align*}
Trivially, $\partial^c \phi (x_0) \subset [\partial^c \phi (U)]_{x_0} $.
This definition is justified by the following lemma, which is also very useful in later discussions.
\begin{lemma}\label{L:DASM for c-sub of a set}
Let Assumptions~\ref{A:twist}, \ref{A:c-exp} and \ref{A:domain convex} hold.  Suppose Loeper's maximum principle ({\bf DASM}) holds. Let $\phi$ be a $c$-convex function on $M$. Let $U \subset M$ be an open set, and let $x_0 \in U$. Then
\begin{itemize}
\item[(1)]  $[\partial^c \phi (U)]_{x_0}$ is $ c$-convex with
respect to $x_0$; \item[(2)]  $[\partial^c \phi (U)]_{x_0} \subset
\partial^c\phi (U)$; \item[(3)] If $U \to \{x_0\}$, then both
$\partial^c\phi (U)$,  $[\partial^c \phi (U)]_{x_0} \to \partial^c
\phi(x_0)$.
\end{itemize}
\end{lemma}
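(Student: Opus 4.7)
The plan is to handle the three claims in order, with Loeper's maximum principle (\textbf{DASM}) driving (1), an Alexandrov-type interior minimum argument driving (2), and a monotone squeeze finishing (3).

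For (1), I would pick $\bar x_0, \bar x_1 \in [\partial^c\phi(U)]_{x_0}$ and, using Assumption~\ref{A:c-exp}, select $p_i \in \cl(\domc(x_0))$ with $\cexp_{x_0}(p_i) = \bar x_i$. By the convexity of $\domc(x_0)$ (Assumption~\ref{A:domain convex}), the curve $\bar x(t) := \cexp_{x_0}((1-t)p_0 + tp_1)$ is a well-defined $c$-segment with respect to $x_0$. For each $x \in \partial U$, the function $m_t(x) := -c(x, \bar x(t)) + c(x_0, \bar x(t))$ satisfies $m_t(x) \le \max(m_0(x), m_1(x))$ by Loeper's maximum principle, and the right-hand side is at most $\phi(x) - \phi(x_0)$ because $\bar x_0, \bar x_1 \in [\partial^c\phi(U)]_{x_0}$. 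Hence $\bar x(t) \in [\partial^c\phi(U)]_{x_0}$, proving $c$-convexity with respect to $x_0$.

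For (2), I would reformulate the defining inequality of $[\partial^c\phi(U)]_{x_0}$ as $F(y) := \phi(y) + c(y,\bar x) \ge F(x_0)$ on $\partial U$, so any minimizer $x_1$ of $F$ on the (compact) $\cl(U)$ can be taken to lie in $U$. At this interior minimum, pick $p \in \partial_x(-c(\cdot,\bar x))(x_1)$, which is nonempty because $c$ is semiconcave. The semiconvex inequality for $-c(\cdot,\bar x)$ at $x_1$ combined with $F(y) \ge F(x_1)$ for $y$ near $x_1$ gives $\phi(y) - \phi(x_1) \ge \langle p, \exp_{x_1}^{-1}y\rangle + o(|\exp_{x_1}^{-1}y|)$, i.e., $p \in \partial\phi(x_1)$. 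Assumption~\ref{A:c-exp} then yields $\cexp_{x_1}(p) = \bar x$, and Lemma~\ref{L:DASM for c-sub} identifies $\bar x$ as an element of $\cexp_{x_1}(\partial\phi(x_1)) = \partial^c\phi(x_1) \subset \partial^c\phi(U)$.

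For (3), observe the chain $\partial^c\phi(x_0) \subset [\partial^c\phi(U)]_{x_0} \subset \partial^c\phi(U)$: the first inclusion is immediate from the definitions and the second is precisely (2). Since the graph of $\partial^c\phi$ is closed (this is built into \eqref{subdiff}) and $\bar M$ is compact, $\partial^c\phi(U)$ converges in the Hausdorff sense to $\partial^c\phi(x_0)$ as $U$ shrinks to $\{x_0\}$, and the squeeze forces $[\partial^c\phi(U)]_{x_0}$ to the same limit. The principal technical obstacle is the interior-minimum step of (2) when $\bar x \in \cCut(x_1)$: there $c(\cdot,\bar x)$ fails to be smooth, so $\partial_x(-c(\cdot,\bar x))(x_1)$ is a genuinely multi-valued convex set, and one needs the full extension of $\cexp_{x_1}$ to $\cl(\domc(x_1))$ provided by Assumption~\ref{A:c-exp} to guarantee that the selected $p$ is mapped back to $\bar x$; once this is granted, Lemma~\ref{L:DASM for c-sub} seamlessly converts the classical subdifferential relation into a $c$-subdifferential one.
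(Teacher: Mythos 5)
Your proof is correct and follows essentially the same approach as the paper: part (1) uses DASM on a $c$-segment through $x_0$ together with the defining inequality on $\partial U$, part (2) is the paper's ``lower the graph of $-c(\cdot,\bar x)$ and lift it until it touches $\phi$ inside $\cl(U)$'' argument, made explicit as an interior minimization of $F(y)=\phi(y)+c(y,\bar x)$ followed by an appeal to Assumption~\ref{A:c-exp} and Lemma~\ref{L:DASM for c-sub}, and part (3) is the same squeeze via the closed graph of $\partial^c\phi$. The only point you leave slightly implicit is why a minimizer of $F$ can be taken inside $U$: the reason (as the paper notes) is that if the minimum were attained on $\partial U$, then the defining inequality $F\ge F(x_0)$ on $\partial U$ forces $x_0\in U$ to be another minimizer.
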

\begin{proof}
Assertion (1) follows directly from the definitions of Loeper's maximum principle ({\bf DASM})
and of the set $[\partial^c \phi (U)]_{x_0}$.

To prove Assertion (2), fix $\bar x \in [\partial^c \phi (U)]_{x_0}$, and move first the graph of the function
$-c(\cdot,\bar x)$ down so that it lies below $\phi$ inside $U$, and then lift it up until it touches the graph of $\phi$ inside $\cl(U)$.
Thanks to the assumption $\bar x \in [\partial^c \phi (U)]_{x_0}$ there exists at least one touching point $x'$ which belongs to $U$
(indeed, if there is a touching point on $\p U$, then $x_0$ is another touching point),
and Lemma~\ref{L:DASM for c-sub} ensures that $\bar x \in \partial^c \phi(x')$.

For (3), the
convergence $\partial ^c \phi (U) \to \partial^c \phi (x_0)$
follows by continuity, and $[\partial^c \phi (U)]_{x_0} \to
\partial^c \phi(x_0)$ comes then from (2).
\end{proof}

For $\bar x \in \bar M$, let $S(\bar x)$ be the \emph{contact set}
\begin{align*}
S(\bar x) : = \{ x \in M  \ |  \ \bar x \in \partial^c \phi (x) \} = \partial^{\bar c} \bar \phi (\bar x).
\end{align*}
(The last identity follows from reciprocity, see
Lemma~\ref{L:reci}.) For any $x_0 \in S(\bar x)$ one can write
\begin{align*}
S(\bar x) = \{ x \in M  \ | \ \phi (x ) -\phi (x_0) = - c( x, \bar
x) +  c(x_0, \bar x) \}.
\end{align*}
A set $Z$ in $M$ is called a \emph{$c$-section} of $\phi$ with respect to $\bar x$ if
there is  $\lambda_{\bar x} \in \R$ such that
\begin{align*}
Z:= \{ z \in M \ | \ \phi(z) \le -c(z, \bar x) + \lambda_{\bar x}
\ \}.
\end{align*}

The following simple observation is very useful for studying regularity of $c$-convex functions.
It was originally made (implicitly) in \cite{FKM} and independently by Liu \cite{Liu09}.
\begin{lemma}[\bf $c$-convex $c$-sections]\label{L:c-convex c-section}
Let Assumptions~\ref{A:twist}, \ref{A:c-exp} and \ref{A:domain convex} hold. Suppose Loeper's maximum principle ({\bf DASM}) holds. Let $\phi$ be a  $c$-convex function on $M$, and fix $\bar x \in \bar M$. Every $c$-section $Z$ of $\phi$ with respect to $\bar x$ is $\bar c$-convex with respect to $\bar x$.
\end{lemma}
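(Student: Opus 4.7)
The plan is to unfold the definition of the $c$-section via the duality $\phi(z) = \sup_{\bar y \in \bar M}\{-c(z,\bar y)-\bar\phi(\bar y)\}$ and then apply Loeper's maximum principle ({\bf DASM}) along the $\bar c$-segment.

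First, I would pick any two points $z_0,z_1\in Z$ and produce a $\bar c$-segment $\{z(t)\}$ with respect to $\bar x$ joining them. By Assumption~\ref{A:c-exp}, $\cexpb_{\bar x}$ extends as a surjection from $\cl(\domcb(\bar x))$ onto $M$, so we may choose $\bar p_0,\bar p_1\in\cl(\domcb(\bar x))$ with $z_i=\cexpb_{\bar x}(\bar p_i)$. By Assumption~\ref{A:domain convex}, $\domcb(\bar x)$ is convex, hence so is $\cl(\domcb(\bar x))$; therefore $(1-t)\bar p_0+t\bar p_1$ lies in $\cl(\domcb(\bar x))$ and $z(t):=\cexpb_{\bar x}((1-t)\bar p_0+t\bar p_1)$ is a well-defined $\bar c$-segment with respect to $\bar x$ joining $z_0$ to $z_1$.

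Next I would quantify membership in $Z$. Setting $\bar m_t(\bar y):=-c(z(t),\bar y)+c(z(t),\bar x)$ as in Assumption~\ref{A:convex DASM}, the $c$-convexity of $\phi$ gives
\begin{equation*}
\phi(z(t))+c(z(t),\bar x)=\sup_{\bar y\in \bar M}\bigl\{-c(z(t),\bar y)+c(z(t),\bar x)-\bar\phi(\bar y)\bigr\}=\sup_{\bar y\in\bar M}\bigl\{\bar m_t(\bar y)-\bar\phi(\bar y)\bigr\}.
\end{equation*}
Applying Loeper's maximum principle ({\bf DASM}) to $\bar m_t$ along the $\bar c$-segment $\{z(t)\}$ yields $\bar m_t(\bar y)\le\max\{\bar m_0(\bar y),\bar m_1(\bar y)\}$ for every $\bar y\in\bar M$. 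Taking the supremum over $\bar y$ and passing the maximum outside, I obtain
\begin{equation*}
\phi(z(t))+c(z(t),\bar x)\le\max\Bigl\{\phi(z_0)+c(z_0,\bar x),\,\phi(z_1)+c(z_1,\bar x)\Bigr\}\le\lambda_{\bar x},
\end{equation*}
where the last inequality uses $z_0,z_1\in Z$. Thus $z(t)\in Z$ for all $t\in[0,1]$, proving $\bar c$-convexity of $Z$ with respect to $\bar x$.

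I do not anticipate any real obstacle: the argument is a direct consequence of the duality formula, the convexity of the domain of $\cexpb_{\bar x}$, and the one-variable form of Loeper's maximum principle. The only minor point to check is that endpoints with $z_i$ possibly on the cut-locus of $\bar x$ are handled by the extension of $\cexpb_{\bar x}$ to $\cl(\domcb(\bar x))$ afforded by Assumption~\ref{A:c-exp}, which guarantees that a $\bar c$-segment joining any two points of $M$ actually exists.
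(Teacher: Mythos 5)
Your proof is correct and fleshes out exactly what the paper's terse one-line proof ("This follows from the definition of $c$-convex functions and Loeper's maximum principle (DASM)") intends: express $\phi(z(t))+c(z(t),\bar x)$ via the duality formula as a supremum of $\bar m_t-\bar\phi$, apply the (DASM) inequality $\bar m_t\le\max\{\bar m_0,\bar m_1\}$ pointwise, and pass the max outside the sup. The care you take to invoke Assumption~\ref{A:c-exp} (so that $\cexpb_{\bar x}$ reaches all of $M$, including points on the cut locus of $\bar x$) and Assumption~\ref{A:domain convex} (so that the line segment stays in $\cl(\domcb(\bar x))$) is exactly the right bookkeeping and matches the framework the authors set up.
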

\begin{proof}
 This follows from the definition of $c$-convex functions and Loeper's maximum principle ({\bf DASM}).
\end{proof}

Given Borel sets $V\subset M$ and $\bar V\subset \bar M$, we denote by $|V|$ and $|\bar V|$
their volume (computed with respect to the given Riemannian metric on $M$ and $\bar M$, respectively).
The following is our last assumption. As we already remarked in the introduction,
it is satisfied whenever $\phi$ is the potential associated to an optimal transport map
and the densities are both bounded away from zero and infinity.

\begin{assumption}[\bf bounds on $c$-Monge-Amp\`ere measure of $\phi$]\label{A:Monge Ampere}
There exists $\lambda>0$ such that
\begin{align*}
\lambda |\Omega|
\le |\partial^c \phi (\Omega)|
\le \frac{1}{\lambda} |\Omega| \hbox{ \ for all Borel set $\Omega \subset M$}.
\end{align*}
We sometimes abbreviate this condition on $\phi$ simply by writing
$|\partial^c \phi| \in [\lambda, \frac{1}{\lambda}]$.
\end{assumption}

\section{Preliminary results}\label{S:prelim}

 In this section, we list some preliminary results we require later. The first subsection deals with general convex sets and the second subsection considers the properties of the cost function under suitable assumptions.
\subsection{Convex sets}
We first list two properties of convex sets that will be useful later.
\begin{lemma}[\bf John's lemma]\label{L:John}
For a compact convex set $S \subset \R^n$, 
there exists an affine transformation $ L: \mathbb{R}^n \to \mathbb{R}^n$ such that
$B_1 \subset L^{-1}(S) \subset B_{n}$. Here, $B_1$ and $B_n$ denote the ball of radius $1$ and $n$, respectively, centered at $0$. 
\end{lemma}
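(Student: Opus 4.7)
\emph{Plan.} This is the classical John's lemma; I would prove it via the maximum volume (John) ellipsoid construction. \textbf{Step 1 (Existence).} An ellipsoid is determined by its center $c \in \R^n$ and a positive definite symmetric matrix $A$. The set of such $(c,A)$ yielding an ellipsoid contained in $S$ is closed, and a volume-maximizing sequence is pre-compact: $c$ stays in the compact set $S$, $\det A$ is bounded between two positive constants (volume $\le \mathrm{vol}(S)$), and the smallest eigenvalue of $A$ is bounded below by $1/\mathrm{diam}(S)^2$, so the largest eigenvalue is bounded too. Hence the continuous volume functional attains a maximum $E \subset S$.

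\emph{Step 2 (Affine reduction).} Let $L: \R^n \to \R^n$ be the invertible affine map with $L(B_1) = E$, and set $S' := L^{-1}(S)$, so $B_1 \subset S'$. Since $L^{-1}$ rescales all volumes by the constant $|\det L|^{-1}$, the ball $B_1$ is still the maximum volume ellipsoid inscribed in $S'$. It remains to prove $S' \subset B_n$.

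\emph{Step 3 (Key geometric estimate).} Suppose by contradiction some $p \in S'$ has $|p| = d > n$; after a rotation take $p = (d, 0, \ldots, 0)$. By convexity, $S'$ contains the rotationally symmetric body $T := \mathrm{co}(B_1 \cup \{p\})$, whose boundary is a cap of $\partial B_1$ for $x_1 \le 1/d$ and the cone from $p$ tangent to $\partial B_1 \cap \{x_1 = 1/d\}$ for $x_1 \ge 1/d$. I will exhibit an ellipsoid $E_s \subset T$ with $\mathrm{vol}(E_s) > \mathrm{vol}(B_1)$, contradicting Step 2. Consider
\[
E_s := \Bigl\{(x_1, x') \in \R \times \R^{n-1} : \tfrac{(x_1 - s)^2}{(1+s)^2} + \tfrac{|x'|^2}{b(s)^2} \le 1 \Bigr\},
\]
for small $s > 0$, with $b(s)$ chosen maximal such that $E_s \subset T$. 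The choice $a = 1+s$ forces $E_s$ and $B_1$ to share the leftmost point $(-1, 0, \ldots, 0)$, and solving the discriminant equation for tangency of $\partial E_s$ to the cone from $p$ gives the closed form $b(s)^2 = 1 - \tfrac{2s}{d-1}$. A second-order expansion at the left tangency confirms $E_s$ also lies inside the spherical cap part of $T$ for small $s$. The volume ratio is
\[
\frac{\mathrm{vol}(E_s)}{\mathrm{vol}(B_1)} = (1+s)\, b(s)^{n-1},
\]
whose logarithmic derivative at $s = 0^+$ equals
\[
1 - \frac{n-1}{d-1} = \frac{d - n}{d - 1} > 0
\]
by the hypothesis $d > n$. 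Hence $\mathrm{vol}(E_s) > \mathrm{vol}(B_1)$ for all sufficiently small $s > 0$, the desired contradiction.

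\emph{Main obstacle.} The real content is the sharp perturbation estimate in Step 3: the tangent cone from $p$ leaves just enough ``radial slack'' to enlarge $B_1$ into a strictly larger inscribed ellipsoid precisely when $d > n$, with the threshold $d = n$ critical, producing the best constant in the lemma. Steps 1 (compactness) and 2 (affine invariance of the maximization) are standard.
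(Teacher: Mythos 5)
The paper does not prove this lemma; it simply cites John's original 1948 paper, so there is no ``paper proof'' to compare against. Your proposal supplies the classical maximum-volume (John) ellipsoid argument, which is indeed the standard and correct route, and the key computation is right: normalizing so that $E = B_1$, placing the candidate ellipsoid $E_s$ with first semi-axis $1+s$ tangent to $B_1$ at $(-1,0,\ldots,0)$, the tangency condition
$(1+s)^2\alpha^2 + b^2\beta^2 = (\gamma - \alpha s)^2$
for the line $\alpha x_1 + \beta x_2 = \gamma$ with $\alpha=1$, $\beta=\sqrt{d^2-1}$, $\gamma=d$ does give $b(s)^2 = 1 - \tfrac{2s}{d-1}$, and $(1+s)b(s)^{n-1}$ has logarithmic derivative $1 - \tfrac{n-1}{d-1} > 0$ at $s=0$ exactly when $d > n$. (Also note that the lemma as stated, and your Step~1, both implicitly require $S$ to have nonempty interior; otherwise no inscribed ellipsoid exists and $L$ cannot be invertible.)

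There is, however, a real gap in Step~3. You define $b(s)$ to be maximal with $E_s \subset T$, then silently assume the binding constraint is the tangent cone, so that the discriminant equation gives $b(s)^2 = 1 - \tfrac{2s}{d-1}$. To justify this one must show that $E_s$ with this $b(s)$ actually lies inside the spherical cap portion $\{x_1 \le 1/d\}$ of $\partial T$, and the ``second-order expansion at the left tangency'' only yields a statement near $(-1,0,\ldots,0)$, not over the whole range $x_1 \in [-1, 1/d]$. What actually saves the argument is that (in the 2D cross-section) the boundaries of $B_1$ and $E_s$, written as $x_2^2$ quadratic in $x_1$, meet at precisely two values of $x_1$: once at $x_1 = -1$, and once at
$$
x_1 \;=\; \frac{2 + s(d+3)}{2d + s(d-1)} \;=\; \frac{1}{d} + \frac{(d+1)^2}{2d^2}\,s + O(s^2),
$$
which exceeds $1/d$ for all small $s>0$. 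Hence $E_s$ stays inside $B_1$ on the cap $x_1 \le 1/d$ and inside the cone for $x_1 \ge 1/d$, so $E_s \subset T$. This is a fixable omission, but as written the proposal does not establish that the closed-form $b(s)$ is admissible, which is precisely the step it would need to survive careful scrutiny.
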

\begin{proof}
See \cite{John}.
\end{proof}

\begin{lemma}
\label{lemma:orthogonal sections}
Let $S$ be a convex set in $\R^n = \R^{n'} \times \R^{n''}$, and denote by
$\pi', \pi''$ the canonical projections onto $\R^{n'}$ and
$\R^{n''} $, respectively.
Let $S'$ be a slice orthogonal to the second component, that is
$$
S' = (\pi'')^{-1}(\bar x'')\cap  S \qquad\text{for some }\bar x'' \in \pi''(S).
$$
Then there exists a constant $C(n)$, depending only on $n=n'+n''$,
such that
$$
C(n) \,|S| \ge \Haus{n'}(S') \Haus{n''}(\pi'' (S)),
$$
where $\Haus{d}$ denotes the $d$-dimensional Hausdorff measure.
\end{lemma}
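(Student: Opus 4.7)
The plan is to deduce this from the Brunn--Minkowski inequality applied to the one-parameter family of parallel slices of $S$. Set $E := \pi''(S) \subset \R^{n''}$, which is convex, and for each $y \in E$ let $S_y := \{x' \in \R^{n'} \mid (x',y) \in S\}$, a convex subset of $\R^{n'}$; in particular $S'$ is isometric to $S_{\bar x''}$. Fubini's theorem gives $|S| = \int_E \Haus{n'}(S_y)\,dy$, so the task reduces to bounding this integral from below by a dimensional multiple of $\Haus{n'}(S') \cdot \Haus{n''}(E)$.

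First I would use convexity of $S$ to observe that $S_{(1-t)y_0 + t y_1} \supseteq (1-t)\, S_{y_0} + t\, S_{y_1}$ for $y_0, y_1 \in E$ and $t \in [0,1]$; the Brunn--Minkowski inequality in $\R^{n'}$ then yields that $f(y) := \Haus{n'}(S_y)^{1/n'}$ is concave on $E$. Applying concavity along the segment from $\bar x''$ to an arbitrary $y \in E$ at the midpoint $t = 1/2$ gives $f\bigl(\tfrac12 \bar x'' + \tfrac12 y\bigr) \ge \tfrac12 f(\bar x'')$. As $y$ ranges over $E$, these midpoints sweep out the dilated set $E' := \tfrac12(\bar x'' + E)$, whose Lebesgue measure is $2^{-n''}\Haus{n''}(E)$. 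Restricting the Fubini integral to $E'$ then yields
\[
|S| \;\ge\; \int_{E'} \Haus{n'}(S_y)\,dy \;\ge\; 2^{-n'}\,\Haus{n'}(S')\cdot 2^{-n''}\,\Haus{n''}(E),
\]
so the constant $C(n) = 2^n$ suffices.

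I do not expect any serious obstacle beyond identifying Brunn--Minkowski as the correct tool controlling how slices of a convex body vary with the transverse parameter; the remaining steps are routine. Degenerate cases cause no trouble: if $S$ has empty interior in $\R^n$ then the slicing forces $\Haus{n'}(S') = 0$ as well, and whether $\bar x''$ lies on the boundary or interior of $E$ is irrelevant, since the midpoint argument keeps everything inside $E$ by its convexity.
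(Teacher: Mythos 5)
Your argument is correct, and the constant $C(n)=2^n$ indeed works. Note, however, that the paper does not actually supply a proof of this lemma in-text: it simply cites \cite[Lemma 7.8]{FKM}, so there is no argument here to compare against line by line. Your route is the standard Brunn--Minkowski one: the inclusion $S_{(1-t)y_0+ty_1}\supseteq(1-t)S_{y_0}+tS_{y_1}$ (which holds because $S$ is convex and the slices over $E=\pi''(S)$ are nonempty) gives, via Brunn--Minkowski in $\R^{n'}$, that $y\mapsto \Haus{n'}(S_y)^{1/n'}$ is concave on $E$; evaluating at midpoints with the fixed endpoint $\bar x''$ and integrating over the half-scaled set $\tfrac12(\bar x''+E)\subset E$ then yields the bound with $C(n)=2^{n'}\cdot 2^{n''}=2^n$. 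This is self-contained, elementary, and gives an explicit constant, which is arguably more informative than the bare citation in the paper.

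One small imprecision in your closing remark: it is not true in general that empty interior of $S$ forces $\Haus{n'}(S')=0$ (take $S=[0,1]^{n'}\times\{0\}$, where $\Haus{n'}(S')=1$). What saves that example is $\Haus{n''}(\pi''(S))=0$. In fact you need not treat degenerate cases separately at all: your main chain of inequalities already shows that if the product $\Haus{n'}(S')\,\Haus{n''}(\pi''(S))$ is positive then $|S|>0$, so the contrapositive handles every degenerate configuration at once. (You may also wish to note that Brunn--Minkowski, and hence the $1/n'$-concavity step, tacitly assumes $n'\ge 1$; the case $n'=0$ is trivially true with $C=1$ and is in any event outside the regime used in this paper.)
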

\begin{proof}
See \cite[Lemma 7.8]{FKM}.
\end{proof}
 The following lemma is important in the last step (Section~\ref{S:final argument})
 of the proof of the main theorem.
\begin{lemma}\label{lemma:mutiple orthogonal sections}
Let $X=X^1 \times \ldots \times X^k$, with $X^i=\R^{n_i}$,
$i=1,\ldots, k$, and write a point $x \in X$ as $x=(x^1,
\ldots, x^k)$, $x^i \in X^i$. For each $i=1,\ldots, k$, let $U^i$ be a subset of
$X^i$, and let $s_i=(s_i^1,\ldots,s_i^k) \in X$ with $s_i^i \in
U^i$. Define $S_i \subset X$ as
$$
S_i:=\{s_i^1\} \times\ldots \times\{s_i^{i-1}\} \times U_i\times\{s_i^{i+1}\}
\times\ldots \times \{s_i^k\},
$$
and consider the convex hull $\co(S_1, \ldots, S_k)$ of the sets $S_1,
\ldots, S_k$. Then there exists a constant $C(n,k)$, depending only on $n:=n_1 + \ldots + n_k$ and $k$, such that
$$
C(n, k)\, |\co(S_1, \ldots, S_k)| \ge \Pi_{i=1}^k
\Haus{n_i} (S^i).
$$
\end{lemma}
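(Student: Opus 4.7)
The plan is to bypass induction entirely and construct a single explicit affine injection from $U^1 \times \cdots \times U^k$ into the convex hull, whose Jacobian is a constant depending only on $n$ and $k$. The idea is to sample one ``test point'' from each $S_i$ simultaneously and average with uniform weights.

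For each $i$, parametrize $S_i$ by the canonical identification $p_i : U^i \to X$,
$$p_i(u) := (s_i^1, \ldots, s_i^{i-1}, u, s_i^{i+1}, \ldots, s_i^k),$$
and consider the averaging map
$$\Phi : U^1 \times \cdots \times U^k \to X, \qquad \Phi(u_1, \ldots, u_k) := \frac{1}{k}\sum_{i=1}^{k} p_i(u_i).$$
Since $p_i(u_i) \in S_i$ and the weights $1/k$ are nonnegative with sum $1$, each value of $\Phi$ is a convex combination of points of $S_1 \cup \cdots \cup S_k$, so $\Phi(U^1 \times \cdots \times U^k) \subset \co(S_1, \ldots, S_k)$.

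The key computation is that the $j$-th coordinate of $\Phi$ depends only on $u_j$: indeed
$$\pi^j \circ \Phi(u_1, \ldots, u_k) = \frac{1}{k}\Big(u_j + \sum_{i \ne j} s_i^j\Big).$$
Hence $D\Phi$ is block-diagonal with $j$-th block equal to $(1/k)\,I_{n_j}$, so $\Phi$ is affine, injective, and has constant Jacobian $k^{-n}$. The change-of-variables formula then yields
$$|\co(S_1,\ldots,S_k)| \ge |\Phi(U^1 \times \cdots \times U^k)| = k^{-n}\prod_{i=1}^{k}\Haus{n_i}(U^i) = k^{-n}\prod_{i=1}^{k}\Haus{n_i}(S_i),$$
so the statement holds with the explicit constant $C(n, k) = k^n$.

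The main conceptual trap I expect here is the temptation to induct on $k$ using Lemma~\ref{lemma:orthogonal sections}: if one tries to peel off the factor $X^k$, then since the ``heights'' $s_i^k$ for $i<k$ are unrelated, no single slice of $\co(S_1, \ldots, S_k)$ orthogonal to $X^k$ contains a reduced version of the problem, so the induction does not close cleanly. The averaging construction avoids this obstacle entirely by bundling contributions from all $k$ slices into one affine parametrization, reducing the volume comparison to the determinant of a trivial diagonal matrix.
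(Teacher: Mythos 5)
Your proof is correct, and it takes a genuinely different and cleaner route than the paper, yielding the explicit constant $C(n,k)=k^n$. The key observation — that the $j$-th coordinate of $\Phi(u_1,\ldots,u_k)=\frac1k\sum_i p_i(u_i)$ depends only on $u_j$, so $D\Phi$ is block diagonal with constant Jacobian $k^{-n}$ — makes the volume comparison immediate. The paper instead proceeds via the barycenter construction: it sets $b=\frac1k\sum_i s_i$ and $b_i=\frac{1}{k-1}\sum_{l\ne i}s_l$, forms the recentered slices $S_i^b=\co(b_i,S_i)\cap\{x\ :\ x^j=b^j\ \text{for}\ j\ne i\}$, which all pass through the common point $b$ and are mutually orthogonal while staying inside $\co(S_1,\ldots,S_k)$, and then \emph{does} iterate Lemma~\ref{lemma:orthogonal sections} on $\co(S_1^b,\ldots,S_k^b)$, closing the loop with the homothety $b=\frac1k s_i+\frac{k-1}{k}b_i$ to get $\Haus{n_i}(S_i^b)\ge k^{-n_i}\Haus{n_i}(S_i)$. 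So the ``trap'' you describe is not avoided by the paper so much as fixed by it: the barycenter recentering is precisely what realigns the slices so that the induction closes. Your averaging map achieves the same alignment implicitly and all at once, trading the iterated slicing lemma for a single determinant computation; the price is nil and the constant is explicit, whereas the paper's route has the virtue of reusing Lemma~\ref{lemma:orthogonal sections}, which is needed elsewhere anyway. (Both arguments share the same mild implicit hypothesis that $\Haus{n}(\prod_i U^i)=\prod_i\Haus{n_i}(U^i)$, harmless in the application where the $U^i$ are measurable.)
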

\begin{proof}
First consider the barycenter $b$ of the set $\{s_1, \ldots,
s_k\}$, that is
$$
b := \frac{1}{k} (s_1 + \ldots + s_k).
$$
We will construct sets $S_i^b$ each of which  contains $b$ and has Hausdorff measure comparable with $S_i$. In addition, these sets
are mutually orthogonal. We will finish the proof by considering
the volume of the convex hull of these sets $S_1^b, \ldots,
S_k^b$.

For each $i$, let $b_i$ be the barycenter of the set $\{s_1,
\ldots, s_k\} \setminus \{ s_i\}$, i.e.,
$$
b_i := \frac{1}{k-1} (s_1 + \ldots+s_{i-1} + s_{i+1} + \ldots+ s_k).
$$
Consider the cone $\co(b_i, S_i) \subset \co(S_1, \ldots, S_k)$
and let $S_i^b$ be the intersection
$$
S_i^b := \co(b_i, S_i) \cap \{ x \in X \ | \ x^j =b^j \hbox{ for
$j\ne i$} \}.
$$
Note that $b \in S_i^b$ and these sets $S_1^b, \ldots, S_k^b$ are
mutually orthogonal, in the sense that, for each $x \in S_i^b$ and $y
\in S_j^b$ with $i\ne j$, it holds $(x-b) \cdot (y-b) =0$. Now,
consider the convex hull $\co(S_1^b, \ldots, S_k^b) \subset
\co(S^1, \ldots, S^k)$. The previous orthogonality implies
$$
C(n)\, |\co(S_1^b, \ldots, S_k^b)| \ge \Pi_{i=1}^k
\Haus{n_i}(S_i^b).
$$
for some constant $C(n)$ depending only on $n_1+
\ldots+ n_k$. (This inequality is obtained for instance by
iteratively applying Lemma~\ref{lemma:orthogonal sections}.) To
conclude the proof simply observe that $b=\frac{1}{k} s_i +
\frac{k-1}{k}b_i$, and so
$$
\Haus{n_i}(S_i^b)  \geq \frac{1}{k^{n_i}}\Haus{n_i}(S_i).
$$
\end{proof}

\subsection{Coordinate change}\label{SS:coordinate}
In this subsection we briefly recall the coordinate change introduced in \cite[Section 4.1]{FKM} that transforms
$c$-convex functions into convex functions under the condition ({\bf
convex DASM}), referring to \cite[Section 4.1]{FKM} for more details. Throughout this subsection we let Assumptions~\ref{A:twist}, \ref{A:c-exp} and \ref{A:domain convex} hold.

Let $\bar y_0$ be an arbitrary point in $\bar M$.
Then the map $x \in M(\bar y_0) \mapsto q \in T^*_{\bar y_0} \bar M$
given by $q (x)= - D_{\bar x}c(x, \bar y_0)$ is an embedding thanks to Assumption~\ref{A:twist}.  Recall that
$\bar M^* (\bar y_0) \subset T^*_{\bar y_0} \bar M$ denotes the image of this
map, that this map is by definition the inverse $c$-exponential
map $(\cexp_{\bar y_0})^{-1}$, and the $c$-exponential map is a diffeomorphism up to the boundary of $\bar M^* (\bar y_0)$ (see Assumption~\ref{A:c-exp}). Denote
\begin{align*}
\tilde c (q, \bar x) := c(x(q), \bar x) - c(x(q), \bar y_0).
\end{align*}
Then the $c$-convex function $\phi$ is transformed to a $\tilde c$-convex function $\varphi$ defined as
\begin{align*}
\varphi (q) := \phi (x(q) ) + c (x(q), \bar y_0).
\end{align*}
If Loeper's maximum principle ({\bf DASM}) holds, then Lemma~\ref{L:c-convex c-section} shows that
$\tilde c$-sections of $\varphi$ are convex. This property was observed independently
by Liu \cite{Liu09}, who used it to derive an optimal H\"older exponent for optimal
maps under the strict condition {\bf (A3)} on the cost, sharpening the H\"older continuity result
of Loeper \cite{L}.
Furthermore, if ({\bf convex DASM}) holds then $-\tilde c(q, \bar x)$ is
convex in $q$ for any $\bar x \in \bar M$, which then implies
convexity of $\varphi$ in $q$ (see \cite[Theorem 4.3]{FKM} for more details).
One can easily check that $\bar c$-segments with respect to $\bar
x$ are transformed via this coordinate change to $\tilde{\bar
c}$-segments with respect to $\bar x$, and $c$-segments with
respect to $x(q)$ are transformed to $\tilde c$-segments with
respect to $q$. Therefore, Loeper's maximum principle ({\bf DASM}) or ({\bf convex DASM}) for $c$
implies the same for $\tilde c$.

\subsubsection{Relation between cotangent vectors in
two different coordinates}
Here we give an explicit relation
between covectors in the new coordinate variable $q$ (as introduced above) and
the original coordinate variable $x$. 
Fix arbitrary $\bar  y_0 \in
M$, $x_0 \in M(\bar y_0)$, and let  $q_0 = - D_{\bar x} c(x_0,
\bar y_0) \in T^*_{\bar y_0 } \bar M$. For each $\bar z \in \bar M(x_0)$,
consider the maps
\begin{align}\label{eq:eta p relation}
&\bar z \mapsto \eta (\bar z) := -D_x c(x_0, \bar z) \in T^*_{x_0
} M\\\nonumber & \bar z \mapsto p(\bar z) := - D_q \tilde c(q_0,
\bar z) \in  T^*_{q_0} (T^*_{\bar y_0} \bar M).
\end{align}
 where $\tilde c (q, \bar x) := c(x(q), \bar x) - c(x(q), \bar y_0)$  and the variables  $x$ and $q$ are related as $q (x)= - D_{\bar x}c(x, \bar y_0)$.
Denote by $M^*(x_0)$, $\tilde M^*(q_0)$ the embedding of $\bar M(x_0)$
under the mappings $\bar z \mapsto \eta(\bar z)$, $\bar z \mapsto
p (\bar z)$, respectively.  These sets are related by an affine map as we see in the following lemma. In particular, from Assumption~\ref{A:domain convex} both sets are convex in $T^*_{x_0 } M$, $T^*_{q_0} (T^*_{\bar y_0}
M)$, respectively.

\begin{lemma}\label{L:eta shift}
Let Assumptions~\ref{A:twist} and \ref{A:c-exp} hold. Let $\eta(p)$ denote the map from $\tilde M^*(q_0)$ to $M^*(x_0)$ that associates $p(\bar z)$ to $\eta(\bar z)$ as in the relation \eqref{eq:eta p relation},
and let $\eta_0 =  -D_xc(x_0, \bar y_0) \in M^*(x_0) \subset T^*_{x_0 } M$.
 Fix local coordinates. Then for all $p \in \tilde M^*(q_0)$,   $\eta(p)= (\eta(p)^1, \cdots, \eta(p)^n)$ is given as
$$\eta (p)^i = p^j \,\bigl(-D_{x^i} D_{\bar x^j} c (x_0, \bar y_0)\bigr) + \eta_0^i.$$
This formula allows the affine function $p\mapsto \eta(p)$
to be extended to a global map $\eta: T^*_{q_0}(T^*_{\bar y_0} \bar M) \to T^*_{x_0} M$.
\end{lemma}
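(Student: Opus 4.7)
The proof will be a direct chain-rule computation exploiting that the coordinate change $x\mapsto q(x) = -D_{\bar x}c(x,\bar y_0)$ has Jacobian matrix $\partial q^j/\partial x^i = -D_{x^i}D_{\bar x^j}c(x,\bar y_0)$, which by Assumption \ref{A:twist} is invertible. So the plan is simply to differentiate $\tilde c(q,\bar x) = c(x(q),\bar x) - c(x(q),\bar y_0)$ with respect to $q$ at $q_0$ and then invert.

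First, I would apply the chain rule to obtain, for arbitrary $\bar z \in \bar M(x_0)$,
\[
\frac{\partial \tilde c}{\partial q^j}(q_0,\bar z) = \left[\frac{\partial c}{\partial x^i}(x_0,\bar z) - \frac{\partial c}{\partial x^i}(x_0,\bar y_0)\right]\frac{\partial x^i}{\partial q^j}(q_0).
\]
Using the definitions $\eta(\bar z)^i = -\partial c/\partial x^i(x_0,\bar z)$ and $\eta_0^i = -\partial c/\partial x^i(x_0,\bar y_0)$, this gives
\[
p^j(\bar z) = -\frac{\partial \tilde c}{\partial q^j}(q_0,\bar z) = [\eta(\bar z)^i - \eta_0^i]\,\frac{\partial x^i}{\partial q^j}(q_0).
\]

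Next, I would observe that, since $q^j(x) = -D_{\bar x^j}c(x,\bar y_0)$, the Jacobian $J^j_i := \partial q^j/\partial x^i\big|_{x_0} = -D_{x^i}D_{\bar x^j}c(x_0,\bar y_0)$ is invertible by the twist condition (Assumption \ref{A:twist}), and $\partial x^i/\partial q^j|_{q_0}$ is exactly its matrix inverse. Multiplying the preceding identity on the right by $J^j_k$ and summing over $j$ therefore gives $p^j J^j_k = \eta(\bar z)^k - \eta_0^k$, which rewritten is precisely
\[
\eta(\bar z)^i = p^j(\bar z)\,\bigl(-D_{x^i}D_{\bar x^j}c(x_0,\bar y_0)\bigr) + \eta_0^i,
\]
the claimed formula.

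Finally, the right-hand side is manifestly an affine function of the cotangent vector $p \in T^*_{q_0}(T^*_{\bar y_0}\bar M)$, with coefficients fixed by the Hessian $-D_xD_{\bar x}c(x_0,\bar y_0)$ and the constant $\eta_0$, so exactly the same formula defines the asserted global affine extension $\eta : T^*_{q_0}(T^*_{\bar y_0}\bar M) \to T^*_{x_0}M$. There is no real obstacle here; the only point that deserves mention is that the whole construction is coordinate-invariant in the intrinsic sense that $\eta$ is the pullback of $p$ under the linear map $d q|_{x_0}: T_{x_0}M \to T_{q_0}(T^*_{\bar y_0}\bar M)$, translated so that $p=0$ corresponds to $\eta = \eta_0$.
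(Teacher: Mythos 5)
Your proof is correct and is essentially the same chain-rule computation the paper uses. The only cosmetic difference is direction: you differentiate $\tilde c$ in $q$ to express $p$ in terms of $\eta-\eta_0$ and then invert the Jacobian $\partial q/\partial x$, whereas the paper adds and subtracts $c(x,\bar y_0)$ and applies the chain rule in the form $D_{x^i}=(D_{x^i}q^j)\,D_{q^j}$ to write $\eta^i$ directly as $p^j(D_{x^i}q^j)+\eta_0^i$, with no explicit inversion required.
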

\begin{proof}

Observe that
\begin{align*}
 \eta^i &= - D_{x^i}c ( x_0, \bar z)\\
 &= D_{x^i}\big{|}_{x=x_0} [ -c(x, \bar z) + c(x, \bar y_0) - c(x, \bar y_0) ] \\
&= -D_{q^j} \big{|}_{q=q_0}[ c (x(q), \bar z) - c(x(q), \bar y_0)]
(D_{x^i} \big{|}_{x=x_0} q^j) + \eta_0^i\\
&=p^j\,(D_{x^i} \big{|}_{x=x_0} q^j) + \eta_0^i.
\end{align*}
From the relation
$$
q^j= -D_{\bar x^j} c(x(q), \bar y_0 )
$$
we see that
$$
D_{x^i} \big{|}_{x=x_0} q = -D_{x^i} D_{\bar x^j} c (x_0, \bar y_0),
$$
and the assertion follows.
\end{proof}

\subsubsection{An estimate on the first derivatives of $c$}
In Section~\ref{S:regular component} we will use the following
simple estimate.
\begin{lemma}\label{L:c estimate}
Given  convex sets $\Omega, \Lambda \subset \R^n$, assume that the function $(q,y) \in \Omega \times \Lambda \mapsto c(q,y) \in \R$ is smooth. Then for all $q, \tilde q \in \Omega$ and
$y \in \Lambda$ we have
\begin{align}\label{slope compare}
|-D_q c(q,y)+D_qc(\tilde q,y)|
&\le C | q - \tilde q|\, |D_q c(\tilde q,y)|,
\end{align}
where the constant $C$ depends only on $\|c\|_{C^3 (\Omega \times \Lambda)}$ and $\|(D^2_{qy}c)^{-1}\|_{L^\i(\Omega \times \Lambda)}$.
\end{lemma}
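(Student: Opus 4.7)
The plan is to reduce the inequality to a one-dimensional Taylor expansion in $q$ and then to control the resulting Hessian bound by exploiting a ``flat direction'' of $c$ in the $y$ variable. This lemma is meant to be applied to the cost function produced by the coordinate change of Section~\ref{SS:coordinate}, for which there is a distinguished reference point $\bar y_0 \in \Lambda$ at which $c(\cdot,\bar y_0)\equiv 0$ on $\Omega$; I would work under this implicit structural assumption, as without it the right-hand side can vanish even when the left-hand side does not.

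First I would use convexity of $\Omega$ (so the segment from $\tilde q$ to $q$ stays in $\Omega$) and the fundamental theorem of calculus to write
\[
-D_q c(q,y) + D_q c(\tilde q,y) \;=\; -\int_0^1 D^2_{qq} c\bigl(\tilde q + t(q-\tilde q),\,y\bigr)\,(q-\tilde q)\, dt,
\]
which immediately yields
\[
|-D_q c(q,y) + D_q c(\tilde q,y)|\;\le\; |q-\tilde q|\,\sup_{\xi\in[\tilde q,q]}\|D^2_{qq}c(\xi,y)\|.
\]
Then, because $c(\cdot,\bar y_0)\equiv 0$, one has $D^2_{qq} c(\cdot,\bar y_0)\equiv 0$, and Taylor-expanding $D^2_{qq} c(\xi,\cdot)$ in $y$ from $\bar y_0$ along the segment (contained in $\Lambda$ by convexity) gives
\[
\|D^2_{qq} c(\xi,y)\|\;\le\; \|c\|_{C^3(\Omega\times\Lambda)}\,|y-\bar y_0|.
\]

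On the other hand, the same vanishing property gives $D_q c(\tilde q,\bar y_0)=0$, whence
\[
D_q c(\tilde q,y) \;=\; \int_0^1 D^2_{qy} c\bigl(\tilde q,\,\bar y_0+s(y-\bar y_0)\bigr)(y-\bar y_0)\,ds,
\]
and the uniform invertibility of $D^2_{qy}c$ yields the lower bound
\[
|D_q c(\tilde q,y)|\;\ge\; \frac{|y-\bar y_0|}{\|(D^2_{qy}c)^{-1}\|_{L^\infty(\Omega\times\Lambda)}}.
\]
Stringing the three inequalities together gives the claim with $C=\|c\|_{C^3(\Omega\times\Lambda)}\cdot\|(D^2_{qy}c)^{-1}\|_{L^\infty(\Omega\times\Lambda)}$. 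The only real obstacle is bookkeeping: one has to recognize that the estimate is meaningful only because the coordinate-change setup provides a distinguished $\bar y_0\in\Lambda$ that makes $|D_q c(\tilde q,y)|$ comparable to $|y-\bar y_0|$; once that point is identified, the rest is routine Taylor expansion.
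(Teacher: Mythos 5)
Your instinct that the statement is incomplete is correct: without a distinguished point $\bar y_0 \in \cl\Lambda$ at which $c(\cdot,\bar y_0)\equiv 0$ (so that $D_q c(\cdot,\bar y_0)\equiv 0$ and $D^2_{qq}c(\cdot,\bar y_0)\equiv 0$), the right-hand side can vanish while the left does not, and the lemma as literally stated has easy counterexamples. The paper here only cites \cite[Lemma 7.7]{FKM}, where this normalization is part of the hypotheses and is exactly what the coordinate change of Section~\ref{SS:coordinate} provides. Your Taylor bound $\|D^2_{qq}c(\xi,y)\|\le \|c\|_{C^3}|y-\bar y_0|$ and the overall reduction via the fundamental theorem of calculus in $q$ are both sound and in the spirit of the source.

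The genuine gap is in the lower bound. From $D_q c(\tilde q,y)=\bigl(\int_0^1 D^2_{qy}c(\tilde q,\bar y_0+s(y-\bar y_0))\,ds\bigr)(y-\bar y_0)$ you conclude $|D_q c(\tilde q,y)|\ge |y-\bar y_0|/\|(D^2_{qy}c)^{-1}\|_{L^\infty}$, but a pointwise bound on $\|A(s)^{-1}\|$ does not control $\|(\int_0^1 A(s)\,ds)^{-1}\|$; the integral of invertible matrices can be singular or nearly so (think of a family of rotations by an angle sweeping $\pi$). Indeed one can build a smooth $c$ with $c(\cdot,\bar y_0)\equiv 0$, $D^2_{qy}c$ uniformly invertible on a product of convex sets, yet with $D_q c(\tilde q,y)=0$ at some $y\neq\bar y_0$, so the inequality in this generality is simply false. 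What saves the argument in the paper's application are the standing optimal-transport hypotheses: for each fixed $\tilde q$ the map $y\mapsto -D_q c(\tilde q,y)$ is injective (twist) with \emph{convex image} (Assumption~\ref{A:domain convex}), so you should change variables to $p:=-D_q c(\tilde q,y)$ and run the Taylor argument in $p$: the function $p\mapsto D^2_{qq}c(\xi,y(p))$ vanishes at $p=0$, has derivative bounded by $\|c\|_{C^3}\,\|(D^2_{qy}c)^{-1}\|_{L^\infty}$, and the $p$-domain is convex, whence $\|D^2_{qq}c(\xi,y)\|\le C\,|D_q c(\tilde q,y)|$ directly. This bypasses the problematic ``invertibility of the average'' step and yields the claimed estimate with the advertised constant; without the twist/convexity hypothesis, no such bound is available.
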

\begin{proof}
 See \cite[Lemma 7.7]{FKM}.
\end{proof}

\section{ An Alexandrov estimate: upper bound}\label{S:left Alex}
In this section we show a key Alexandrov type estimate
\eqref{eq:left Alex} which bounds from above the size of
a $c$-section, say $Z_h$, by its height $h$. (An estimate that
bounds the size of the $c$-section $Z_h$ either from above or
below by its `height' $h$ is called \emph{Alexandrov type}.) This
result  is  of its own interest, especially because it is proven under rather general assumptions, and does not rely
on the special structure of products of spheres. In later
sections, a companion inequality showing the lower bound will be
obtained for a special choice of a $c$-section in the particular case of
products of spheres, see Theorem~\ref{T:right Alex}.

The key point in the estimate below is that the term $\frac{\max_{x \in Z_h} |\det
(-D_x D_{\bar x} c (x, \bar x_0))|}{\min_{x \in Z_h}|\det
(-D_xD_{\bar x} c(x, \bar x_0))|}$ appearing in \eqref{eq:left Alex} can be made as close to $1$ as desired, provided
one can ensure that the section $Z_h$ converges to a point as $h \to 0$. This fact will play a crucial role in the proof of Theorem~\ref{T:stay-away}

\begin{lemma}[\bf Alexandrov upper bound]\label{L:left Alex}
Let $M$, $\bar M$ be complete $n$-dimensional Riemannian
manifolds. Suppose the cost $c: M\times \bar M \to \R$ satisfies  Assumptions~\ref{A:twist}, \ref{A:c-exp}, \ref{A:domain convex} and \ref{A:convex DASM}
({\bf convex DASM}). Let $\phi$ be a $c$-convex function on $M$ and
assume $0 < \lambda \le |\partial^c \phi |$ for a fixed $\lambda
\in \R$. Fix $(x_0, \bar x_0) \in M \times \bar M$ such that $\bar
x_0 \in \partial^c \phi (x_0)$, and for $h>0$ consider the
$c$-section $Z_h$ defined as
\begin{align*}
Z_h := \{ x \in M \ | \  \phi (x) -\phi (x_0 ) \le -c(x, \bar x_0)
+ c(x_0, \bar x_0) + h \}.
\end{align*}
Assume that $-c(\cdot, \bar x_0)$ is smooth on $ Z_h$, so that the function  $\cexpb_{\bar x_0}^{-1}$ is   defined and smooth  on $Z_h$, or equivalently  $ Z_h \subset M(\bar x_0)$.
Then the following inequality holds:
\begin{align}\label{eq:left Alex}
\lambda |Z_h|^2  \le C(n) \biggl[\frac{\max_{x \in Z_h} |\det
(-D_x D_{\bar x} c (x, \bar x_0))|}{\min_{x \in Z_h}|\det
(-D_xD_{\bar x} c(x, \bar x_0))|}\biggr]^2 \Big[\sup_{x \in Z_h}
\sup_{p' \in \domc (x)} |d\cexp_{x}\big{|}_{p=p'}|\Big{]}  \,  h^n
\end{align}
 with the constant $C(n) = (4n)^n |B_1|^2 $.
\end{lemma}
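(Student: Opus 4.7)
My plan is to invoke the coordinate change of Section~\ref{SS:coordinate} to transform $\phi$ into a genuinely convex function $\varphi$ on a Euclidean convex set, then apply the classical Alexandrov upper bound for convex functions via a quadratic barrier comparison, and finally read off the Jacobian factors to translate the estimate back.

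\textbf{Step 1.} Take $\bar y_0 := \bar x_0$ in Section~\ref{SS:coordinate}: set $q(x) := -D_{\bar x}c(x,\bar x_0)$ and $\varphi(q) := \phi(x(q)) + c(x(q),\bar x_0)$. By ({\bf convex DASM}) and \cite[Theorem~4.3]{FKM}, $\varphi$ is convex on $\bar M^*(\bar x_0)$; since $\bar x_0 \in \partial^c\phi(x_0)$, the point $q_0 := q(x_0)$ is its minimum, and the image $\tilde Z_h \subset T^*_{\bar x_0}\bar M$ of $Z_h$ under $q$ is exactly the convex sublevel set $\{\varphi \leq \varphi(q_0)+h\}$, with $\varphi \equiv \varphi(q_0)+h$ on $\partial\tilde Z_h$.

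\textbf{Step 2.} I would transfer the bound $\lambda \leq |\partial^c\phi|$ into a lower Monge--Amp\`ere bound for $\varphi$ in $\R^n$. Two Jacobian factors enter. The change of variable $x \leftrightarrow q$ has Jacobian $|\det D_xD_{\bar x}c(x,\bar x_0)|$, so $|Z_h|_M \leq |\tilde Z_h|/\min_{Z_h}|\det D_xD_{\bar x}c|$. On the image side, Lemma~\ref{L:DASM for c-sub} gives $\partial^c\phi(Z_h) = \partial^{\tilde c}\varphi(\tilde Z_h) = \bigcup_q \tcexp_q(\partial\varphi(q))$; pulling this image back to Lebesgue measure on $T^*_{q_0}$ via the diffeomorphism $\tcexp_{q_0}$, and using the affine correspondence of Lemma~\ref{L:eta shift} to compare the Euclidean subdifferentials in the $q$-- and $x$--charts (which introduces a \emph{second} factor of $|\det D_xD_{\bar x}c|$), one obtains
\[
|\partial\varphi(\tilde Z_h)| \;\geq\; \frac{\lambda}{\sup |d\cexp_x|}\, \Bigl[\tfrac{\min_{Z_h}|\det D_xD_{\bar x}c|}{\max_{Z_h}|\det D_xD_{\bar x}c|}\Bigr]^{2} |\tilde Z_h| \;=:\; \lambda' |\tilde Z_h|.
\]

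\textbf{Step 3.} Finally, I apply the classical Alexandrov upper bound to $\varphi$ on $\tilde Z_h$, using only this lower Monge--Amp\`ere bound. By John's Lemma~\ref{L:John}, choose an affine $L$ with $B_1 \subseteq L^{-1}(\tilde Z_h - q_0) \subseteq B_n$; let $\tilde\varphi(q) := \varphi(Lq+q_0) - \varphi(q_0)$, whose Monge--Amp\`ere is bounded below by $|\det L|^{2}\lambda'$. Compare $\tilde\varphi$ on the inscribed ball $B_1$ with the quadratic barrier $w(q) := (|\det L|^2\lambda')^{1/n}(|q|^2-1)/2$, which has Monge--Amp\`ere exactly $|\det L|^2\lambda'$, vanishes on $\partial B_1$, and attains minimum $-(|\det L|^2\lambda')^{1/n}/2$ at the origin. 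Since $\tilde\varphi - h \leq 0 = w$ on $\partial B_1$, the Monge--Amp\`ere comparison principle gives $\tilde\varphi - h \leq w$ on $B_1$; evaluating at $q=0$ yields $|\det L|^2\lambda' \leq (2h)^n$, and combining with $|\tilde Z_h| \leq n^n|B_1|\cdot|\det L|$ produces $\lambda'|\tilde Z_h|^2 \leq C(n)h^n$ with a purely dimensional constant. Translating back via $|Z_h| \leq |\tilde Z_h|/\min|\det D_xD_{\bar x}c|$ (which is already absorbed in $\lambda'$) then yields~\eqref{eq:left Alex}.

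The main obstacle I anticipate is the careful bookkeeping in Step~2, particularly explaining why the ratio $\max/\min|\det D_xD_{\bar x}c|$ must appear \emph{squared}---once on the domain side and once when comparing Euclidean subdifferentials across the two charts via Lemma~\ref{L:eta shift}---while $\sup|d\cexp_x|$ appears only to the first power, used solely to convert $\bar M$-volumes into Lebesgue volumes on the codomain side. The crucial payoff, exploited later in the proof of Theorem~\ref{T:stay-away}, is that both Jacobian factors tend to $1$ as $Z_h$ contracts to the single point $x_0$, so that $\varphi$ becomes ``approximately Euclidean'' near its minimum and the bound~\eqref{eq:left Alex} approaches the sharp Euclidean Alexandrov estimate.
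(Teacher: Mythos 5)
Your plan shares the opening move with the paper (change to the $q = -D_{\bar x}c(\cdot,\bar x_0)$ coordinates so that $\varphi$ becomes convex, then apply John's lemma), but the crucial Step~2 is not established, and as written the computation you propose is not correct.

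The point is that $\partial^{\tilde c}\varphi(\tilde Z_h) = \bigcup_q \tcexp_q(\partial\varphi(q))$ involves the $\tilde c$-exponential taken at each \emph{moving} base point $q$; it is \emph{not} the image of $\partial\varphi(\tilde Z_h)$ under the single diffeomorphism $\tcexp_{q_0}$. So ``pulling back via $\tcexp_{q_0}$'' does not convert the inequality $|\partial^c\phi(Z_h)| \geq \lambda|Z_h|$ into $|\partial\varphi(\tilde Z_h)| \geq \lambda'|\tilde Z_h|$; and Lemma~\ref{L:eta shift} only compares subdifferentials at the \emph{single} point $q_0$ (and yields the determinant of the fixed cross-derivative at $(x_0,\bar x_0)$, not the $\max/\min$ ratio over the section), so it does not fix the problem. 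A genuine lower Monge--Amp\`ere bound for the convex $\varphi$ \emph{can} be extracted from the $c$-Monge--Amp\`ere bound, but the mechanism is different from what you wrote: it uses ({\bf convex DASM}) in an essential way, via the identity (in smooth language) $|\partial^{\tilde c}\varphi(\Omega)| = \int_\Omega \frac{\det\big(D^2\varphi + D_qD_q\tilde c\big)}{|\det D_qD_{\bar y}\tilde c|}\,dq$ together with $D_qD_q\tilde c \leq 0$, which forces $\det\big(D^2\varphi + D_qD_q\tilde c\big) \leq \det D^2\varphi$. Your Step~2 never invokes ({\bf convex DASM}), which is a sign that the key mechanism is missing; and one would still have to justify this comparison at the level of Alexandrov solutions, which is not automatic.

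The paper avoids the Monge--Amp\`ere transfer entirely, and this is the idea you are missing. It fixes the single point $x_b = \cexpb_{\bar x_0}(q_b)$ coming from the John \emph{center} $q_b$ (not $q_0$), writes $\partial^c\phi\big(\tfrac12 Z_h\big) = \cexp_{x_b}\big(-D_xc(x_b,\bar x_0) + \mathcal V\big)$ with $\mathcal V = \{D_x m_{\bar x}(x_b) : \bar x \in \partial^c\phi(\tfrac12 Z_h)\}$ a subset of the \emph{fixed} cotangent space $T^*_{x_b}M$, and bounds $|\mathcal V|$ directly by a gradient estimate for the renormalized, convex functions $\hat m_{\bar x}$ on $B_1 \subset \hat W_h \subset B_n$. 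The dilation by $\tfrac12$ is what makes the gradient estimate work, and referencing everything to $x_b$ is what replaces the $\tilde c$-Monge--Amp\`ere-to-Monge--Amp\`ere transfer you attempted. Relatedly, your barrier comparison in Step~3 is written around $q_0$ (translating by $q_0$ before applying John's map), but $q_0$ is the minimizer of $\varphi$, not the John center; in general the translated, normalized set need not contain $B_1$, so the comparison at the origin is not licensed. The paper's use of the John center $q_b$ together with the $\tfrac12$-dilation resolves both issues simultaneously.
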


\begin{remark}\label{R:DbarDc nonsingular}
{\rm In the statement of Lemma~\ref{L:left Alex} and its proof,  it is important to notice that by the assumption  $-c(\cdot, \bar x_0)$ is smooth on  $\cl Z_h$ and Assumptions~\ref{A:twist} and \ref{A:c-exp}, the derivatives of $\cexpb_{\bar x_0}$ and its inverse (on $Z_h$), i.e., $-D_x D_{\bar x} c (x, \bar x_0)$, $x \in Z_h$,  are all nonsingular.}
\end{remark}
 \begin{remark}\label{R:c-exp contraction}{\rm
For $c=\dist^2/2$, Loeper's maximum principle ({\bf DASM}) (and so also ({\bf convex DASM}))
implies that $M=\bar M$ has nonnegative sectional curvature  (see \cite{L}).
Therefore in this case $\cexp_{y}$ is a contraction, that is
\begin{align*}
 \sup_{p' \in \domc (x)} |d\cexp_{x}\big{|}_{p=p'}|
\le 1.
\end{align*}
We do not know if this contraction property holds for general
non-negatively cross-curved cost functions.}
\end{remark}

\begin{proof}
For globally smooth cost functions (on the products of two bounded
domains) a similar result was proved in  \cite[Proposition 7.3]{FKM}. In the present case
where the cost function has singularities, the previous proof does
not work any more  and we require the following subtle argument. 

As in \cite{FKM}, we will follow the strategy developed in \cite{caff-loc} by using renormalization techniques, but only after a suitable change
of coordinates. Consider the coordinate change $ x \in Z_h \mapsto q =
-D_{\bar x} c(x(q), \bar x_0 ) \in \bar W_h \subset \bar M^*(\bar x_0)\subset
T_{\bar x_0}^* \bar M$, i.e.,
$x= \cexp_{\bar x_0} q$ and $Z_h=\cexp_{\bar x_0} (\bar W_h)$, and let
\begin{align*}
 m_{\bar x} (\cdot) := -c (\cdot, \bar x) + c (\cdot, \bar x_0).
\end{align*}
As explained in Section~\ref{SS:coordinate}, in these new coordinates the functions
\begin{align*}
 \hbox {$q \mapsto m_{\bar x} (x(q))\quad$ and
 $\quad q\mapsto \varphi(q)=\phi(x(q)) + c(x(q), \bar x_0)$}
\end{align*}
are convex. Moreover the set $\bar W_h$ is convex,
as
\begin{align*}
 \bar W_h = \{ q \in T_{\bar x_0}^* \bar M \ | \ \varphi(q) -\varphi(q_0) \le h \},
\end{align*}
where $q_0$ is the point corresponding to $x_0$ in the new
coordinates, i.e.,  $\cexp_{\bar x_0} q_0 = x_0$. It is also
important to notice that   $\bar x_0 \in \partial^c \phi (x_0)$
implies $\varphi(q) -\varphi(q_0) \ge 0$. We now use Lemma~\ref{L:John} to find an affine map $A: T_{\bar x_0}^* \bar M \simeq \R^n \mapsto T_{\bar x_0}^* \bar M \simeq \R^n$ such
that $A(\hat{W}_h)=\bar W_h$, with $B_1 \subset \hat{W}_h \subset B_n$.
Denote $q_b = A(0)$ and $x_b = \cexp_{\bar x_0} q_b$. Define the
renormalized function $\hat \varphi (\hat q) := \varphi (A \hat q)$
for each $\hat q \in \hat W_h$, and denote $\frac{1}{2} \bar W_h :=
A(\frac{1}{2} \hat W_h)$ and $\frac{1}{2} Z_h := \cexpb_{\bar x_0}
(\frac{1}{2} \bar W_h)$, where $\frac{1}{2} \hat W_h$ denotes the dilation of $\hat W_h$ by a factor $1/2$ with respect to the origin.  This $1/2$-dilation (or any factor in $(0,1)$ works) will be important  in this proof.

Consider the reciprocal expression
\begin{align*}
 \partial^c \phi \Big(\frac{1}{2} Z_h\Big) = \cexp_{x_b} (- D_x c(x_b, \bar x_0) + \mathcal{V})
\end{align*}
where
\begin{align*}
\mathcal{V} := \Big\{ D_x m_{\bar x} (x_b)  \ | \ \bar x \in
\partial^c \phi \Big(\frac{1}{2} Z_h \Big)\Big\} \subset
T^*_{x_b}M.
\end{align*}
Here $D_x  m_{\bar x}$ denotes the differential when $m_{\bar x}$
is differentiable, otherwise it means an arbitrary covector in the
subdifferential $\partial m_{\bar x}(x)$. Notice that $- D_x c(x_b,
\bar x_0) + \mathcal{V} \subset \cl (\domc(x_b))$, and thus
\begin{align}\label{E:partial c of Z less V}
\Big|\partial^c \phi \Big(\frac{1}{2} Z_h\Big) \Big| \le
\Big(\sup_{p' \in \domc (x_b)} |d\cexp_{x_b}\big{|}_{p=p'}|\Big)
|\mathcal{V}|.
\end{align}
Now, the left-hand side is bounded from below as
\begin{align}\nonumber
\Big|\partial^c \phi \Big(\frac{1}{2} Z_h \Big) \Big| & \ge
\lambda \Big| \frac{1}{2} Z_h \Big| \qquad \hbox{(by the
assumption $|\partial^c \phi| \ge \lambda$)} \\\nonumber & \ge
\lambda \Big{[}\min_{w\in W_h} |\det (d\cexpb_{\bar x_0}
\big{|}_{q=w})|\Big{]} \Big(\frac{1}{2}\Big)^n |W_h| \qquad
\hbox{(by $\frac{1}{2} Z_h = \cexpb_{\bar x_0}
(\frac{1}{2} W_h)$) }\\\nonumber & \ge \lambda \frac{\min_{w\in W_h}
|\det (d\cexpb_{\bar x_0} \big{|}_{q=w})|}{\max_{w\in W_h} |\det
(d\cexpb_{\bar x_0} \big{|}_{q=w})|} \Big(\frac{1}{2}\Big)^n
|Z_h|\\\label{E:left by Z} & \ge  \lambda \frac{\min_{x \in Z_h}
|\det (-D_x D_{\bar x} c (x, \bar x_0))|}{\max_{x \in Z_h}|\det
(-D_xD_{\bar x} c(x, \bar x_0))|} \Big(\frac{1}{2}\Big)^n |Z_h|
\qquad \hbox{(by $D_{\bar x} c(\cdot , \bar x_0)  = \cexpb_{\bar
x_0}^{-1}$)}.
\end{align}
In the following we will bound $|\mathcal{V}|$ from above by
\begin{align*}\frac{\max_{x \in Z_h} |\det (-D_x D_{\bar x}
c (x, \bar x_0))|}{\min_{x \in Z_h}|\det (-D_xD_{\bar x} c(x, \bar
x_0))|}\frac{h^n}{|Z_h|},
\end{align*}
which will finish the proof;  here the dilation $\frac{1}{2} Z_h$ plays a crucial role (see \eqref{E:m * bound}). Fix $\bar x \in
\partial^c \phi (\frac{1}{2} Z_h)$, and let $\hat q_{\bar x} \in \frac{1}{2} \hat W_h$ such that $\bar x \in \partial^{\hat
c} \hat \varphi (\hat q_{\bar x})$. Here, the cost function $ \hat
c$ is the modified cost function accordingly with the coordinate
changes:
\begin{align*}
 \hat c (\hat q, \bar y) := c (\cexpb_{\bar x_0}(A\hat q), \bar y) - c (\cexpb_{\bar x_0}(A\hat q), \bar x_0).
\end{align*}
Consider the  function
\begin{align}\label{E:m and m *}
\hat m_{\bar x} (\hat q) := m_{\bar x} (\cexpb_{\bar x_0} (A\hat
q))= -\hat c (\hat q, \bar x)  .
\end{align}
Then
\begin{align*}
\hat m_{\bar x}(\hat q) - \hat m_{\bar x} (\hat q_{\bar x}) + \hat
\varphi(\hat q_{\bar x})   \le \hat \varphi (\hat q)
 \qquad \hbox{for  $\hat q \in \hat W_h$}.
\end{align*}
We observe that $\hat m_{\bar x}(\cdot) - \hat m_{\bar x} (\hat
q_{\bar x})$ is a convex function on $\hat W_h$ which vanishes at $\hat
q_{\bar x} \in \frac{1}{2}\hat W_h$, and $\hat m_{\bar x}(\cdot) -
\hat m_{\bar x} (\hat q_{\bar x}) \leq h$ on $\partial \hat W_h$.
Since $B_1 \subset \hat W_h \subset B_{n}$ this easily gives $\hat
m_{\bar x}(0) - \hat m_{\bar x} (\hat q_{\bar x}) \geq -h$, which by convexity implies
\begin{align}\label{E:m * bound}
|D_{\hat q}\hat m_{\bar x} (0)| \le 2 h .
\end{align}
To get information on  $D_x m_{\bar x}$, observe that from
\eqref{E:m and m *}
\begin{align}\label{E:m and m * diff}
 (d\cexpb_{\bar x_0} \big{|}_{q=q_b} A)^*D_x m_{\bar x}(x_b) = D_{\hat q}\hat m_{\bar x}
 (0),
\end{align}
where $(d\cexpb_{\bar x_0} \big{|}_{q=q_b} A)^* : T^*_{x_b}M \mapsto T^*_0 (T_{\bar x_0}^* \bar M)$ is the dual map of the derivative map $d\cexpb_{\bar x_0} A : T_0 (T_{\bar x_0}^* \bar M) \mapsto T_{x_b} M$.
Here we abuse the notation and $A$ denotes both the affine map and its derivative.
Moreover we use the canonical identification $T^*_0 (T_{\bar x_0}^* \bar M) \approx T_0 (T_{\bar x_0}^* \bar M) \approx T_{\bar x_0}^* \bar M$.
Hence \eqref{E:m * bound} and \eqref{E:m and m * diff} imply  the key inclusion
\begin{align*}
\mathcal{V} \subset \big(d\cexp_{\bar x_0}
\big{|}_{q=q_b}^*\big)^{-1} (A^*)^{-1} B_{2h},
\end{align*}
so that
\begin{align*}
|\mathcal{V}| & \le \big|\det (d\cexpb_{\bar x_0} \big{|}_{q=q_b}^*)^{-1}\big| \big|\det (A^*)^{-1}\big| |B_1|2^n h^n\\
 & = |\det (d \cexpb_{\bar x_0}
\big{|}_{q=q_0})|^{-1} |\det A|^{-1} |B_1|2^n h^n\\
& \qquad \hbox{(by the identification between vectors and covectors)}\\
& \le  |\det (d \cexpb_{\bar x_0}
\big{|}_{q=q_0})|^{-1}  |B_1|^2 (2n)^n \frac{h^n}{|W_h|} \qquad \hbox{(by $|W_h| = |\det A||\hat W_h| \le |\det A| |B_1|n^n$)}  \\
& \le  \frac{\max_{w \in W_h} |\det (d\cexpb_{\bar x_0}\big{|}_{q=w})|}{|\det (d \cexpb_{\bar x_0}
\big{|}_{q=q_0})|} |B_1|^2 (2n)^n \frac{h^n}{|Z_h|}\\
& \qquad \hbox{(by $ |Z_h| \le \max_{w \in W_h} |\det (d\cexpb_{\bar x_0}\big{|}_{q=w})| |W_h|$ )}\\
& \le \frac{\max_{x \in Z_h} |\det (-D_x D_{\bar x} c (x, \bar
x_0)|)}{\min_{x \in Z_h}|\det (-D_xD_{\bar x} c(x, \bar x_0))|}
|B_1|^2 (2n)^n \frac{h^n}{|Z_h|} \qquad \hbox{(by $D_{\bar x} c(\cdot , \bar x_0)  = \cexpb_{\bar x_0}^{-1}$).}
\end{align*}
Together with \eqref{E:partial c of Z less V} and \eqref{E:left by Z}, this concludes the proof.
\end{proof}

\section{Stay-away property on multiple products of spheres}
\label{S:setting spheres} From now on we restrict our attention to
the case $M = \bar M = M^1 \times \ldots \times M^k$, where for each $i=1, \ldots, k$,
$M^i=\mathbb S_{r_i}^{n_i}$ is a round sphere of constant sectional curvature $r_i^{-2}$.
Though $M=\bar M$, we sometimes keep the bar notation
to emphasize the distinction between the source and the target
domain of the transportation. Let  $x = (x^1, \ldots, x^k)$ and
$\bar x = (\bar x^1, \ldots, \bar x^k)$ denote points in the
product $M^1 \times \ldots \times M^k$, with $x^i, \bar x^i \in
M^i$, $i=1, \ldots, k$. Assume that the transportation cost $c$ on
$M$ is the tensor product of the costs $c^i$ on each $M^i$,
defined as
\begin{align}\label{E:cost on product}
 c(x, \bar x) := \sum_{i=1}^k c^i (x^i, \bar x^i).
\end{align}
Assume moreover that each $c^i$ is of the form $f^i(\dist_i)$
($\dist_i$ being the distance on $M^i$) for some smooth
strongly convex even function $f^i: \R \to \R$,
normalized so that $f^i(0)=0$. (This normalization
assumption can be done with no loss of generality,
as one can always add an arbitrary constant to the cost function.)
Moreover we suppose that each $c^i$ satisfies Assumptions~\ref{A:twist}, \ref{A:c-exp},
\ref{A:domain convex}, \ref{A:convex DASM} and \ref{A:DASM+} in
Section~\ref{S:notation}. As shown in \cite{KM2},  under these assumptions
the tensor product cost $c$ also satisfies
Assumptions~\ref{A:twist}, \ref{A:c-exp},
\ref{A:domain convex} and \ref{A:convex DASM} (but not necessarily \ref{A:DASM+}). The reader should have in mind that our model
example is $c=\dist^2/2$, which as shown in \cite{KM2}
satisfies all the assumptions above. However we prefer to give a proof
of the result with general $f^i$ since this will not cost further effort in the proof,
and we believe it may be of interest for future applications.

Let us observe that for any point $\bar x$ we have $M(\bar x) = M^1 (\bar x^1) \times \ldots \times M^k(\bar x^k) $ and $\domcb (\bar x) = \domcb (\bar x^1) \times
\ldots \times \domcb (\bar x^k)$. Moreover, since that the distance squared
function on a round sphere is smooth except for antipodal pairs,
for each $x^i \in M^i$ we have $\Cut (x^i) = \{-x^i\}$, where
$-x^i$ denotes the antipodal point of $x^i$. (We also write $-x = (-x^1, \cdots, -x^k)$.)  This implies easily
that $\cCut(x) = \Cut(x)$, so that
$M^i(x^i)=M^i\setminus\{-x^i\}$ and $\cCut(x)$ is a union of
(totally geodesic) submanifolds, each of which is an embedding of a
product $M^{i_1} \times \ldots \times M^{i_l}$, $l \le k$.

The goal of the rest of the paper is to show a stay-away property
of optimal transport maps on products of spheres:
\begin{theorem}[\bf Stay-away from cut-locus]\label{T:stay-away}
 Let  $M = \bar M = M^1 \times \ldots \times M^k$, where for each $i=1, \ldots, k$,
$M^i=\mathbb S_{r_i}^{n_i}$ is a round sphere of constant sectional curvature $r_i^{-2}$.
Let $c$
be the cost given in \eqref{E:cost on product} with $c^i$ is of the form $f^i(\dist_i)$,
where $f^i: \R \to \R$ are smooth strongly convex even functions such that $f^i(0)=0$.
Assume further that each cost $c^i$
satisfies Assumptions~\ref{A:twist}, \ref{A:c-exp}, \ref{A:domain
convex}, \ref{A:convex DASM} and \ref{A:DASM+}, and let
$\phi$ be a $c$-convex function satisfying Assumption~\ref{A:Monge
Ampere}. Then
$$
\partial^c \phi(x) \cap \cCut(x)=\emptyset\qquad \forall\,x \in M.
$$
Equivalently, for every $\bar x \in \bar M$ the contact set $S(\bar x) =\partial^{\bar c} \bar \phi (\bar x)$ satisfies
\begin{align*}
 S(\bar x)  \cap \cCutb(\bar x)=\emptyset .
\end{align*}
\end{theorem}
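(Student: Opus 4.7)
I would argue by contradiction. Suppose there exist $x_0 \in M$ and $\bar x_0 \in \partial^c\phi(x_0)\cap\cCut(x_0)$. After permuting factors, one may assume $\bar x_0^i = -x_0^i$ for $i \in I := \{1,\dots,\ell\}$ with $\ell \geq 1$, and $\bar x_0^j \in M^j(x_0^j)$ for $j > \ell$. By reciprocity (Lemma~\ref{L:reci}), $x_0 \in S(\bar x_0) = \partial^{\bar c}\bar\phi(\bar x_0)$. The first step is to understand the geometry of $S(\bar x_0)$ near $x_0$: using Loeper's maximum principle (Lemma~\ref{L:DASM for c-sub}) and the strict version in the directions $j \notin I$ where $c^j$ is smooth at $(x_0^j,\bar x_0^j)$ (Assumption~\ref{A:DASM+}), one shows that $S(\bar x_0)$ is pinned in those coordinates to $\{x_0^j\}$, leaving a potentially nontrivial spread only in the $I$-factors.

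The core of the argument is a competition between the Alexandrov upper bound of Lemma~\ref{L:left Alex} and a matching lower bound (to be established in Section~\ref{S:proof}) for $c$-sections based at regular reference points $(x_*, \bar y)$ chosen so that $x_* \in M(\bar y)$ and $(x_*, \bar y)\to(x_0,\bar x_0)$ along directions where the cost is smooth. Specifically, one considers
\[ Z_h^{\bar y} := \{ x \in M : \phi(x)-\phi(x_*) \le -c(x,\bar y) + c(x_*,\bar y) + h \}, \]
for which Lemma~\ref{L:left Alex} gives $\lambda\,|Z_h^{\bar y}|^2 \le C(n)\,\kappa^2\,\sigma\,h^n$, with the determinant ratio $\kappa$ and the contraction factor $\sigma$ tending to unity as $Z_h^{\bar y}$ shrinks (invoking Remark~\ref{R:c-exp contraction} when $c=\dist^2/2$, and a direct sphere computation for general $f^i$). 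The companion lower bound must exploit the cut-locus contact, showing that $Z_h^{\bar y}$ extends a macroscopic length along the geodesic from $x_*^i$ toward $\bar y^i$ for each $i \in I$ — geometrically, because $-c^i(\cdot,\bar y^i)$ flattens out near its maximum at $-\bar y^i \to x_0^i$ — while the $j\notin I$ directions and the ``height'' direction each contribute factors of order $h^{1/2}$. Passing to the coordinate change of Section~\ref{SS:coordinate} so that $c$-sections become genuinely convex, one assembles the $k$ resulting orthogonal slices via Lemma~\ref{lemma:mutiple orthogonal sections} to produce a lower bound of the form $|Z_h^{\bar y}| \ge c_0\,\prod_{i=1}^k a_i(h)$, which for small $h$ contradicts the upper bound.

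The principal obstacle will be the quantitative calibration of the $a_i(h)$: the slices in the $I$-factors must contribute lengths bounded below independently of $h$ (thanks to the great-arc extension through the cut-locus of each sphere), while the remaining directions contribute the right powers of $h^{1/2}$, and the exponent bookkeeping must be tight enough to beat $h^{n/2}$. This is precisely why Lemma~\ref{lemma:mutiple orthogonal sections} is formulated for arbitrary $k$ rather than just $k=2$. A further subtlety is that $x_0 \in \cCutb(\bar x_0)$, so Lemma~\ref{L:left Alex} cannot be applied at $\bar x_0$ itself; this is handled by the approximation $(x_*,\bar y)\to(x_0,\bar x_0)$ together with a compactness argument using $c$-convexity of $c$-sections (Lemma~\ref{L:c-convex c-section}) and the very specific structure $\cCut(x) = \bigcup_{\emptyset\neq J\subset\{1,\dots,k\}}\prod_{i\in J}\{-x^i\}\times\prod_{i\notin J}M^i$, which lets one reduce the failure of stay-away to a limiting ``worst case'' with well-defined index set $I$ on which to run the competition above.
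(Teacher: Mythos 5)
Your high-level framework is the same as the paper's: argue by contradiction, decompose $M$ according to which spherical factors sit at cut-locus, and compare the Alexandrov upper bound of Lemma~\ref{L:left Alex} against a matching lower bound that exploits the conical behavior of $-c^i(\cdot,\bar x^i)$ near the antipode $-\bar x^i$. The intuition that the cut-locus directions contribute an anomalously large section (Figure~1) is exactly right and is the heart of the paper's argument. But the proposal has three concrete gaps.

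First, the claim that $S(\bar x_0)$ is \emph{pinned} to $\{x_0^j\}$ in the regular coordinates $j\notin I$ does not follow from Assumption~\ref{A:DASM+}. ({\bf DASM$^+$}) is a property of the cost $c$, not of $\phi$; it cannot prevent the contact set from having positive dimension in regular directions (e.g.\ $\phi$ could be $c$-affine on a small set). The paper handles precisely this difficulty by passing to an \emph{exposed point} $x_0^{\cdot\cdot}$ of the convexified contact set $\tilde S^{\cdot\cdot}$ (Section~\ref{S:cut-exposed}) and then perturbing $\bar x_0^{\cdot\cdot}$ in the direction $\nabla L$ of the supporting functional (eq.~\eqref{x d}). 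This is what makes Proposition~\ref{P:d i h i} work — without it one cannot guarantee $Z_{\e,\d,h}\to\{x_0\}$, and Lemma~\ref{L:left Alex} then gives no useful estimate because the determinant ratio need not tend to $1$. Your proposal has no replacement for this step, and a moving base point $x_*$ with $x_*\to x_0$ adds further complications the paper avoids by keeping $x_0$ fixed and moving only $\bar x_{\e,\d}$.

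Second, the quantitative calibration you describe is not the one that actually closes the argument. You suggest that the $I$-factor slices should have lengths ``bounded below independently of $h$'' and that the goal is to ``beat $h^{n/2}$'' in the exponent. Neither is right: Proposition~\ref{P:|Z h i|} shows that the $i$-th cut-locus slice has size $\gtrsim h^{n_i}/\e$ — it shrinks to zero with $h$ at the \emph{same} rate as in regular directions, but with an extra factor $1/\e$. The final inequality in Section~\ref{S:final argument} makes the powers of $h$ cancel exactly; the contradiction is entirely in the $\e$-dependence, namely $1\lesssim\e^{a_0}/\lambda^2$ as $\e\to0$. This also shows why the two-parameter family $\bar x_{\e,\d}$ (with $\d$ and $h/\d\to0$ first, then $\e\to0$) is essential: $\e$ controls the gain, while $\d$ and $h/\d$ control the localization that justifies Lemma~\ref{L:left Alex}.

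Third, the proposal omits the treatment of the intermediate index range $a_0<i\leq b_0$ (factors where some \emph{other} $\bar y\in\partial^c\phi(x_0)$ is antipodal, though $\bar x_0^i$ is not), the construction of the auxiliary convex set $\tilde C$ and the proof of Proposition~\ref{P:bound partial c by tilde C} — which is where Lemma~\ref{lemma:mutiple orthogonal sections} is actually invoked — and the final assembly via Lemma~\ref{lemma:orthogonal sections}. These are not cosmetic: the inequality $\Haus{n''}(\tilde C)\lesssim|\partial^c\phi(Z_{\e,\d,h})|$ is nontrivial precisely because $n''<\dim M$, and requires building a full-dimensional convex subset of $[\partial^c\phi(Z_{\e,\d,h})]_{x_0}$ from the $\tilde C$ together with the cut-locus slices $C^i$.
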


Before sketching the proof of this result, let us first see its
consequences:

\begin{corollary}[\bf Uniformly stay-away from cut-locus]\label{C:uniform stay-away}
Use the notation and assumptions as in Theorem~\ref{T:stay-away}. There exists a positive constant $C$ depending only on $\lambda$ (see Assumption~\ref{A:Monge Ampere}) and $n_i$, $r_i$, $ f^i$, for $i=1, \cdots, k$, such that
\begin{align*}
\dist \big( \partial^c \phi(x) ,  \cCut(x) \big) \ge C \qquad \forall\,x \in M.
\end{align*} 
where $\dist$ denotes the Riemannian distance of $M$. 
\end{corollary}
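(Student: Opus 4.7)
The plan is to prove Corollary~\ref{C:uniform stay-away} by a compactness/contradiction argument that upgrades the qualitative pointwise disjointness from Theorem~\ref{T:stay-away} to a uniform quantitative separation. Suppose by contradiction that the claim fails. Then there exist sequences of $c$-convex functions $\phi_m$ with $|\partial^c\phi_m|\in[\lambda,1/\lambda]$, points $x_m\in M$, $\bar x_m\in\partial^c\phi_m(x_m)$, and $y_m\in\cCut(x_m)$ such that $\dist(\bar x_m,y_m)\to 0$. By compactness of $M=\bar M$, after passing to subsequences we may assume $x_m\to x_\infty$, $\bar x_m\to\bar x_\infty$, $y_m\to y_\infty$; the assumption forces $\bar x_\infty=y_\infty$.

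Next, I would extract a limiting potential. Since the cost $c$ is continuous (hence bounded and uniformly continuous) on the compact manifold $M\times\bar M$, every $c$-convex function is automatically uniformly Lipschitz, with modulus depending only on $c$; in particular the family $\{\phi_m-\phi_m(x_0)\}$ (for any fixed base point $x_0$) is equicontinuous and uniformly bounded, so Arzel\`a--Ascoli yields a further subsequence converging uniformly to a limit $\phi_\infty$, which is again $c$-convex (uniform limits of $c$-convex functions are $c$-convex by direct inspection of \eqref{E:c-convex}).

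The heart of the argument is to verify that all the relevant properties pass to the limit. First, the $c$-subdifferential is closed under uniform convergence: the defining inequality $\phi_m(y)-\phi_m(x_m)\ge -c(y,\bar x_m)+c(x_m,\bar x_m)$ for all $y\in M$ passes to $\phi_\infty(y)-\phi_\infty(x_\infty)\ge -c(y,\bar x_\infty)+c(x_\infty,\bar x_\infty)$, so $\bar x_\infty\in\partial^c\phi_\infty(x_\infty)$. Second, the cut-locus has closed graph in our setting: $y\in\cCut(x)$ means $y^i=-x^i$ for some index $i$, which is a closed condition, so $y_\infty\in\cCut(x_\infty)$. Third, the bound $|\partial^c\phi_\infty|\in[\lambda,1/\lambda]$ is preserved under uniform convergence of $c$-convex functions (this is a standard stability property of the $c$-Monge--Amp\`ere measure: the set-valued map $\phi\mapsto\partial^c\phi$ has closed graph, so for any open $\Omega$ one has $\limsup_m\partial^c\phi_m(\Omega)\subset\partial^c\phi_\infty(\overline\Omega)$ and an analogous lower bound on compact sets, yielding the two-sided volume bound in the limit).

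Combining the three items above, $\phi_\infty$ satisfies the hypotheses of Theorem~\ref{T:stay-away}, while simultaneously $\bar x_\infty=y_\infty\in\partial^c\phi_\infty(x_\infty)\cap\cCut(x_\infty)$, contradicting the conclusion of that theorem. The step I expect to require the most care is the stability of the Monge--Amp\`ere bound $|\partial^c\phi_\infty|\ge\lambda$: the upper bound $|\partial^c\phi_\infty(\Omega)|\le(1/\lambda)|\Omega|$ follows from the closed-graph property and outer regularity, but the lower bound needs the observation that the set of ``non-injectivity'' points of $\partial^c\phi_m$ has measure zero (so that $|\partial^c\phi_m(\Omega)|\ge\lambda|\Omega|$ transfers to the limit via Fatou applied to indicator functions of compact subsets of $\Omega$). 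Since the constant obtained is determined only by $\lambda$ and the fixed geometric data $n_i,r_i,f^i$ (the limiting object lives in the same class), the contradiction produces the desired uniform lower bound~$C$.
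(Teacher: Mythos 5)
Your proof is correct and follows essentially the same compactness-and-contradiction argument as the paper: extract a uniform limit $\phi_\infty$ by Arzel\`a--Ascoli, pass the $c$-subdifferential relation and the Monge--Amp\`ere bounds to the limit, and contradict Theorem~\ref{T:stay-away}. The only difference is that the paper simply cites \cite[Lemma 3.1]{FKM} for the stability of Assumption~\ref{A:Monge Ampere} under uniform convergence, whereas you sketch that stability argument yourself; both are fine.
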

\begin{proof}
The result follows by compactness. Indeed, suppose by contradiction there exists a sequence of $c$-convex functions $\phi_l$ satisfying Assumption~\ref{A:Monge Ampere}, and $x_l \in M$ such that 
 \begin{align*}
\dist \big(\partial^c \phi_l (x_l), \cCut(x_l)\big) \to 0 \qquad \hbox{ as $l \to \infty$}.
\end{align*}
Up to adding a constant, we can also assume that $\phi_l(x_l)=0$.
Then, since $M$ is compact and the functions $\phi_l$ are uniformly semiconvex (and so uniformly Lipschitz),
applying Arzel\`a-Ascoli's Theorem, up to a subsequence there exists a $c$-convex function $\phi_\infty$ and $x_\infty \in M$ such that $\phi_l  \to \phi_\infty$ uniformly and $x_l \to x_\infty$.
We now observe that also $\phi_\infty$ satisfies Assumption~\ref{A:Monge Ampere}
(see for instance \cite[Lemma 3.1]{FKM}).
Moreover, by the definition of $c$-subdifferential we easily obtain 
$$
y_l \in \partial^c\phi_l(x_l),\quad y_l\to y_\infty
\quad \Rightarrow \quad  y_\infty \in \partial^c\phi_\infty(x_\infty).
$$
This implies
$$
\dist \big(\partial^c \phi_\infty (x_\infty), \cCut(x_\infty)\big) = 0,
$$
which contradicts Theorem~\ref{T:stay-away}, and completes the proof. 
\end{proof}

\begin{corollary}[\bf Regularity of optimal maps]\label{C:regularity}
Let $M,\bar M,c$ be as in Theorem~\ref{T:stay-away}.
Assume that $\mu$ and $\nu$ are two probability measures absolutely
continuous with respect to the volume measure, and whose densities
are bounded away from zero and infinity. Then the unique optimal
map $T$ from $\mu$ to $\nu$ is injective and  continuous. 
Furthermore, if both densities are $ C^\a/C^{\infty}$, then $T$ is  $C^{1,\a}/C^{\infty}$.
\end{corollary}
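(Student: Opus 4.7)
The plan is to combine the stay-away property (Theorem~\ref{T:stay-away}, or more precisely its uniform form Corollary~\ref{C:uniform stay-away}) with the regularity theory of \cite{FKM} and \cite{LTW}. Existence and uniqueness of the optimal map $T$ follow immediately from the results of McCann and Levin recalled in the introduction, since $c=\sum_i f^i(\dist_i)$ is semiconcave on $M\times\bar M$ and satisfies the twist condition (Assumption~\ref{A:twist}). The map is characterized $\mu$-a.e.\ by $T(x)\in\partial^c\phi(x)$ for some $c$-convex potential $\phi$, and the assumption that $\rho,\bar\rho$ are bounded away from $0$ and $\infty$ yields the Monge--Amp\`ere-type two-sided bound $\lambda|\Omega|\le|\partial^c\phi(\Omega)|\le\lambda^{-1}|\Omega|$, i.e.\ Assumption~\ref{A:Monge Ampere} holds for some $\lambda>0$.

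Next, I would apply Corollary~\ref{C:uniform stay-away} to obtain a uniform $\delta>0$ such that $\dist(\partial^c\phi(x),\cCut(x))\ge\delta$ for every $x\in M$. Together with the reciprocity Lemma~\ref{L:reci}, this also gives $\dist(S(\bar x),\cCutb(\bar x))\ge\delta$ uniformly in $\bar x$. Consequently, for any $x_0\in M$ there exist small neighborhoods $U\ni x_0$ and $\bar V\supset \partial^c\phi(\overline U)$ such that $c$ is $C^\infty$ on an open neighborhood of $\overline{U\times\bar V}$; by the structure of the cut-locus of the product of spheres, the compact sets $\overline U$ and $\overline{\bar V}$ are bounded away from all singularities of $c$.

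On such local patches I would invoke the change of coordinates recalled in Section~\ref{SS:coordinate}, which under Assumption~\ref{A:convex DASM} transforms $\phi$ into a genuinely convex function $\varphi$ solving a Monge--Amp\`ere equation with a smooth, non-negatively cross-curved cost on Euclidean open sets. In this Euclidean setting, the results of \cite{FKM} (injectivity and continuity of optimal maps for smooth non-negatively cross-curved costs) apply without modification, giving strict $c$-convexity of $\phi$ near $x_0$ and hence showing that $\partial^c\phi(x_0)$ is a single point depending continuously on $x_0$. Covering $M$ by finitely many such patches yields injectivity and continuity of $T$ globally.

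Finally, for the higher regularity statement, once $T$ is continuous, the set $\partial^c\phi$ is the graph of $T$ and lies at positive distance from $\cCut$. Thus $\phi$ solves a standard Monge--Amp\`ere equation with a $C^\infty$ cost satisfying {\bf (A3w)} in a fixed neighborhood of each contact pair $(x,T(x))$, and the interior a priori estimates of Liu--Trudinger--Wang \cite{LTW} bootstrap $C^0$ regularity of $T$ to $C^{1,\alpha}$ for $C^\alpha$ densities and to $C^\infty$ for $C^\infty$ densities. The only genuine difficulty, which is where Theorem~\ref{T:stay-away} has already been done for us, is the uniform stay-away estimate; granted that, the remaining steps are a careful but routine patching of the Euclidean FKM and LTW theories onto the product manifold via the coordinate change of Section~\ref{SS:coordinate}.
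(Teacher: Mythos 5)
Your outline contains the right ingredients (stay-away, coordinate change, [FKM], [LTW]) and your observations on higher regularity are essentially the paper's Remark~\ref{R:regularity}. But your route to injectivity and continuity differs structurally from the paper's, and as written it has a genuine gap at the injectivity step.

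You propose to fix a source point $x_0$, use the uniform stay-away estimate to locate a compact product block $\overline U\times\overline{\bar V}$ away from the cut-locus, transfer to a convex chart, and apply the full [FKM] theory ``without modification'' on each patch, then cover. The difficulty is that injectivity of a transport map on every small patch does \emph{not} imply global injectivity (compare a local homeomorphism like the doubling map on $\S^1$). In [FKM]-language: knowing that $S(\bar x)\cap U$ is a singleton for each patch $U$ is not the same as $S(\bar x)$ being a singleton, because the contact set $S(\bar x)$ may extend across many patches. To upgrade a patch-local statement to a global one you need one of two further ideas: (i) an exposed-point argument applied to \emph{every} point of $S(\bar x)$ (the stay-away property ensures each $y\in S(\bar x)$ sits in a patch where $c(\cdot,\bar x)$ is smooth, and [FKM Theorem~8.1 and Remark~8.2] then say $y$ cannot be exposed if it lies in $\operatorname{supp}|\partial^c\phi|=M$), or (ii) connectivity of $S(\bar x)$ combined with isolatedness. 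Neither step appears in your proposal; the phrase ``apply without modification'' papers over exactly this point.

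The paper sidesteps the issue by organizing the argument around the \emph{target} rather than the source: fix $\bar x$, note by Theorem~\ref{T:stay-away} that the \emph{entire} contact set $S(\bar x)$ lies in $M(\bar x)$ where $c(\cdot,\bar x)$ is smooth, pass to the single chart $q=-D_{\bar x}c(\cdot,\bar x)\in T^*_{\bar x}\bar M$ (so $S(\bar x)$ becomes a compact convex set by [FKM Theorem~4.3]), and then apply [FKM Theorem~8.1/Remark~8.2] \emph{globally} to this convex set to rule out exposed points on $\operatorname{supp}|\partial^c\phi|=M$, forcing $S(\bar x)$ to be a singleton. Only after strict $c$-convexity is established does the paper localize the $C^1$ part. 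If you want to keep your source-patch organization, you must add an explicit argument that every point of $S(\bar x)$ is non-exposed (or invoke connectivity of $S(\bar x)=\cexpb_{\bar x}(\partial\bar\phi(\bar x))$, which follows from convexity of $\partial\bar\phi(\bar x)$, plus an isolatedness argument in the chart); otherwise the localization does not close.
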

 \begin{remark}\label{R:regularity}
{\rm The $C^{1,\a}$-regularity result ($C^{2,\a}$ for the potential $\phi$) in this corollary  is a direct consequence of the injectivity and continuity of $T$ applied to the theory of Liu,Trudinger and Wang \cite{LTW}. The higher regularity $C^\infty$ follows from Schauder estimates.}   
\end{remark}
\begin{proof}
We recall that, under the assumption that $\mu$ and $\nu$ have
densities bounded away from zero and infinity, there exists a
$c$-convex function $\phi$ such that $T(x)=\cexp_x(\nabla
\phi(x))$ a.e., and $\phi$ satisfies Assumption~\ref{A:Monge
Ampere} (see for instance \cite{MTW} or \cite[Lemma 3.1]{FKM}). Hence it suffices to prove
that $\phi$ is $C^{1}$ and strictly $c$-convex, in the sense that $S(\bar x)=\partial^{\bar c} \bar \phi (\bar x)$
is a singleton for every $\bar x \in \p^c\phi(M)=M$.

To this aim, we observe that once we know that $\phi$ is strictly $c$-convex, then we can localize the proof of the  $C^{1}$ regularity in \cite{FKM} 
to obtain the desired result. Thus we only need to show the
strict $c$-convexity of $\phi$.

Fix $\bar x \in M$. By Theorem \ref{T:stay-away} we know that
$S(\bar x) \subset M(\bar x)$, so that in a neighborhood of
$S(\bar x)$ we can consider the change of coordinates $x \mapsto
q= -\bar Dc(x,\bar x) \in T_{\bar x}^* \bar M$. As shown in
\cite[Theorem 4.3]{FKM}, thanks to Loeper's maximum principle ({\bf DASM}) the set $S(\bar x)$ is convex in
these coordinates.
Moreover, since now the cost is smooth in a
neighborhood of $S(\bar x)$, by \cite[Theorem 8.1 and Remark 8.2]{FKM} the  compact convex set $S(\bar
x)$ in the new coordinates has no exposed points on the support of $|\partial^c\phi|$.
Since in our case the support of $|\partial^c\phi|$ is the whole
$M$, the only possibility left is that $S(\bar x)$ is a singleton,
as desired.
\end{proof}

\begin{proof}[Sketch of the proof of Theorem \ref{T:stay-away}]
We prove this theorem by contradiction. Assume there exists a
point $\bar x_0$ such that the contact set $S(\bar x_0)$ intersects $\Cut(\bar x_0)$. First, we find a
\emph{cut-exposed} point $x_0$ in $S(\bar x_0) \cap \Cut(\bar
x_0)$. More precisely we split $M$ as $M^\cdot \times M^\cdd$ so
that $\bar x_0=(\bar x_0^\cdot,\bar x_0^\cdd)$,
$x_0=(x_0^\cdot,x_0^\cdd)$, where $x_0^\cdot=-\bar x_0^\cdot\in\Cut(\bar x_0^\cdot)$, $x_0^\cdd$ stays away from the
cut-locus of $\bar x_0^\cdd$, and $x_0^\cdd$ is an exposed point
in  the  set $S^{\cdot \cdot}= \{ y^{\cdot \cdot} \in M^{\cdot \cdot} \ | \ (-\bar x_0^{\cdot}, y^{\cdot \cdot}) \in S(\bar x_0)  \}$ (see Section~\ref{S:cut-exposed}).
Near $\bar x_0$, for $\e \in (0,1)$ and $\d\in[0, 1]$ we construct a family of points
$\bar x_{\e,\d}=(\bar x_\e^\cdot,\bar x_\d^\cdd)$ such that $d(\bar x_0,\bar x_{\e,\d})\approx \e+\d$, so that for $\delta$ small we have
$\bar x_{\e,\d}\in \bar M^*(x_0)$, or equivalently $x_0 \in
M^*(\bar x_{\e,\d})$.
By suitably choosing the point $\bar x_\d^\cdd$ in order to exploit the fact that $x_0^\cdd$ is an exposed point for  $S^\cdd$, we can ensure that, if $Z_{\e,\d, h}$ denotes
a section obtained by cutting the graph $\phi$ with $-c(\cdot,\bar x_{\e,\d})$ at height $h$ above $x_0$, then for any fixed $\e \in (0,1)$ we have $Z_{\e,\d, h}
\to \{x_0\}$ as $\d,\frac{h}{\d} \to 0$ (see Section~\ref{S:analysis near
cut-exposed}). In particular, for $\e>0$ fixed we have $Z_{\e,\d,h} \subset M^*(\bar x_{\e,\d})$ for $\d,\frac{h}{\d}$ small (equivalently, the function $c(\cdot,\bar x_{\e,\d})$ is smooth inside $Z_{\e,\d, h}$).
Now we take advantage of the choice of $\bar x_{\e}^\cdot$: on the sphere
$\S^{n}$ the function $-\dist^2(\cdot,\bar x)$ looks like a cone near the antipodal
point $-\bar x$, and if $\dist(\bar x,\bar x_\e) \approx \e$ then
the measure of a section obtained by cutting the graph
of $-\dist^2(\cdot,\bar x)$ with $-\dist^2(\cdot,\bar x_\e)$ at height $h$ above $-\bar x$
has measure $\approx
h^{n}/\e$ (see
Proposition~\ref{P:|Z h i|}).
\begin{figure}[h]
\centerline{\epsfysize=1.5truein \epsfbox{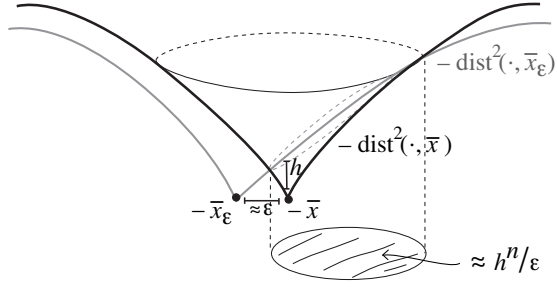}}
\caption{{\small On the sphere, the squared distance function from a point $\bar x$ looks like a cone near $-\bar x$. So, if $\dist(\bar x,\bar x_\e)=\dist(-\bar x,-\bar x_\e) \approx \e$, the section obtained by
cutting its graph
with $-\dist^2(\cdot,\bar x_\e)$ at height $h$ has measure $\approx h^{n}/\e$.}}
\end{figure}
In our case, since  $x_0^\cdot=-\bar x_0^\cdot$,
the function $\phi$  behaves as $-c(\cdot,\bar x_0)\approx -\dist(\cdot,\bar x_0)$ along $M^\cdot$ (see Lemma~\ref{L:phi on M i x 0}).
Hence by the argument above we have an improvement of a factor $1/\e$ in the measure of $Z_{\e,\d, h}$ (see Proposition~\ref{P:|Z h i|}),
which
allows to show the following Alexandrov type inequality:
\begin{align*}
h^{\dim M} \lesssim \e |Z_{\e,\d,h}| |\partial^c \phi
(Z_{\e,\d,h})| \qquad \hbox{for $\d$ and $\frac{h}{\d} $ sufficiently small,}
\end{align*}
where $\lesssim$ is independent of $\e,\d$ and $h$ (see Theorem~\ref{T:right Alex}).
Thanks to Assumption~\ref{A:Monge Ampere}, the above inequality implies
\begin{align}\label{eq:right Alex in sketch}
 h^{\dim M} \lesssim \frac{\e}{\lambda} |Z_{\e,\d,h}|^2 \qquad\hbox{for $\d$ and $\frac{h}{\d} $ sufficiently small}.
\end{align}
On the other hand, since $Z_{\e,\d,h} \subset M^*(\bar x_{\e,\d})$ for $\d$ and $\frac{h}{\d} $
small enough, we can
apply Lemma~\ref{L:left Alex} to $Z_{\e,\d,h}$ and have
\begin{align*}
\lambda |Z_{\e,\d,h}|^2  \le C(n) \bigg{[}\frac{\max_{x \in
Z_{\e,\d,h}} |\det (-D_x D_{\bar x} c (x, \bar
x_{\e,\d})|}{\min_{x \in Z_{\e,\d,h}}|\det (-D_xD_{\bar x} c(x,
\bar x_{\e,\d}))|}\bigg{]}^2 \Big{[}\sup_{x \in Z_{\e,\d,h}}
\sup_{p' \in \domc (x)} |d\cexp_{x}\big{|}_{p=p'}|\Big{]}  \,
h^{\dim M}.
\end{align*}
The convergence $Z_{\e,\d, h} \to \{x_0\}$ as $\d,\frac{h}{\d}  \to 0$ further
reduces this inequality to
\begin{align*}
\lambda |Z_{\e,\d,h}|^2 \lesssim h^{\dim M} \qquad \hbox{for $\d$
and $\frac{h}{\d}$ sufficiently small},
\end{align*}
which contradicts \eqref{eq:right Alex in sketch} as $\e \to 0$
and completes the proof.
\end{proof}

The rest of the paper is devoted to fleshing out the details of the above proof.

\section{Proof of Theorem \ref{T:stay-away} \bf (Stay-away from cut-locus)}
\label{S:proof}

\subsection{Cut-exposed points of contact
sets}\label{S:cut-exposed}
Assume by contradiction that there exists $\bar x_0 = (\bar
x_0^1, \ldots, \bar x_0^k) \in \bar M = M = M^1 \times \ldots
\times M^k$ such that $S(\bar x_0) \cap \cCut (\bar
x_0)\neq \emptyset$.
To prove Theorem~\ref{T:stay-away} a first step is to find a \emph{cut-exposed point}
of the contact set in the intersection with the cut-locus, which we define throughout the present section.

Let $y \in S(\bar x_0) \cap \cCut (\bar
x_0)$, and note that one of the components of $y =
(y^1, \ldots, y^k)$, say $y^j$, satisfies $y^j = -\bar x_0^j$. Moreover we
cannot have $y=-\bar x_0$. Indeed it is not difficult to see that,
if $\bar x_0 \in \partial^c\phi(-\bar x_0)$, then
$\partial^c\phi(-\bar x_0)=M$ (see for instance Lemma \ref{L:phi
on M i x 0}(1) below), which contradicts Assumption \ref{A:Monge
Ampere}.

Among all   points $y \in S(\bar x_0)\cap \cCut(\bar x_0)$, choose one such that the
number $a_0$ of its antipodal (or \emph{cut-locus}) components is maximal,
and denote the point by $y_0$. By rearranging the product $M^1
\times \ldots \times M^k$, we may write with out loss of
generality that
\begin{align}\label{a 0}
 y_0 = (-\bar x_0^1, \ldots, -\bar x_0^{a_0}, y_0^{a_0 +1} , \ldots, y_0 ^k), \qquad y_0^j \not\in \cCut(\bar x_0^j) \quad \forall\,j=a_0+1,\ldots,k.
\end{align}
For convenience, use the expression
\begin{align*}
 M^\cdot = \bar M^\cdot = M^1 \times \ldots \times M^{a_0}, &\qquad
M^{\cdot \cdot} = \bar M^{\cdot \cdot} = M^{a_0 +1} \times \ldots
\times M^k.
\end{align*}
The expressions $A^\cdot$, $A^{\cdot \cdot}$ will be used to denote things defined for elements in $M^\cdot$, $M^{\cdot \cdot}$, respectively. For example,
\begin{align*}
y^\cdot = (y^1, \ldots, y^{a_0}), &\qquad
y^{\cdot \cdot} = (y^{a_0+1}, \ldots, y^k),\\
c^\cdot (y^\cdot, \bar y^\cdot) = \sum_{i=1}^{a_0} c^i (y^i, \bar y^i), &\qquad c^{\cdot \cdot} (y^{\cdot \cdot}, \bar y^{\cdot \cdot}) = \sum_{i=a_0+1}^k c^i (y^i, \bar y^i).
\end{align*}
Consider the set
\begin{align*}
S^\cdd= \{ y^{\cdot \cdot} \in M^{\cdot \cdot} \ | \ (-\bar x_0^{\cdot}, y^{\cdot \cdot}) \in S(\bar x_0)  \}.
\end{align*}
 Notice that due to maximality of $a_0$, $S^\cdd \subset M^\cdd (\bar x_0^\cdd)$ and it is embedded to $\bar M^{\cdot \cdot *}(\bar x_0^{\cdot \cdot})$ through the map
$y^{\cdot \cdot} \mapsto q^\cdd(y^\cdd) =-D_{\bar x^{\cdot \cdot}}
c^{\cdot \cdot}( y^{\cdot \cdot}, \bar x_0^{\cdot \cdot})$. Observe that since $S^{\cdot \cdot}$ is compact, the
resulting set, say $\tilde S^{\cdot \cdot}$, is compact too. Moreover $\tilde S^{\cdot \cdot}$
is convex since it is the restriction of the convex set $\tilde
S(\bar x_0)$ to $M^{\cdot \cdot *}(\bar x_0^{\cdot \cdot})$, where
$\tilde S(\bar x_0)$ is the image of $S(\bar x_0)$ under the map
$x \mapsto -D_{\bar x} c(x, \bar x_0)$. (More precisely, this set
$\tilde S (\bar x_0)$ is defined as the closure of the image of
$S(\bar x_0) \setminus \cCut(\bar x_0)$.) 
This compact convexity
ensures the existence of an exposed point $q_0^{\cdot \cdot}$ for $\tilde S^{\cdot
\cdot}$, that is, there exists an affine
function $L$ on  $T^*_{\bar x_0^\cdd} \bar M^\cdd$ such
that
\begin{align}\label{eq:L}
 L (q_0^{\cdot \cdot}) = 0, \ \ \ \hbox{and}  \ \ \ L ( q^{\cdot \cdot}) < 0  \ \ \forall  q^{\cdot \cdot} \in \tilde S^{\cdot \cdot}\setminus\{ q_0^{\cdot \cdot}\}.
 \end{align}
(In case $\tilde S^{\cdot \cdot} = \{ q_0^{\cdot \cdot}\}$ let $L \equiv 0 $.) 
One should note that if $L$ is such an affine function, then $t L$ is also such an affine function for any $t>0 $.
Let $x_0 \in S(\bar x_0)$ be the corresponding point of $q_0^{\cdot \cdot}$ in $M$, that is,
\begin{align}\label{x 0}
 x_0 = (-\bar x_0^{\cdot}, x_0^{\cdot \cdot}),
\end{align}
where
\begin{align*}
q_0^{\cdot \cdot} =  -D_{\bar x^{\cdot \cdot}} c^{\cdot \cdot} (x_0^{\cdot \cdot}, \bar x_0^{\cdot \cdot}). 
\end{align*}
We call this point $x_0$ a \emph{cut-exposed} point of $S(\bar
x_0)$, since its components are either cut-locus type or exposed.
\begin{figure}[h]
\centerline{\epsfysize=1.5truein \epsfbox{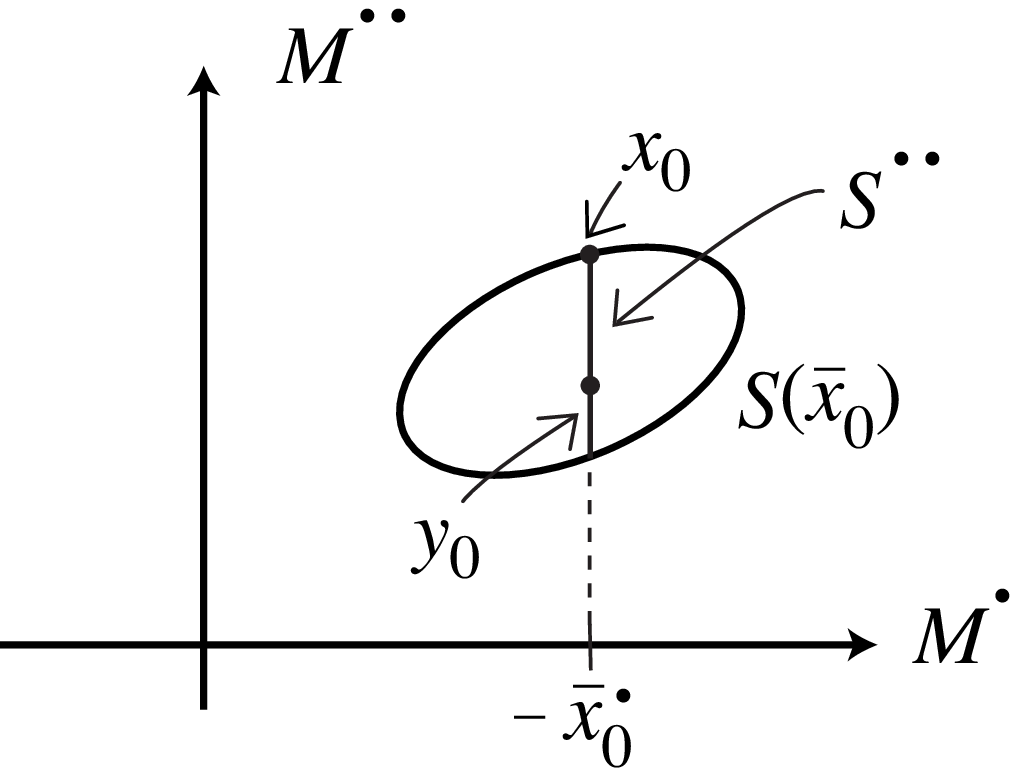}}
\caption{{\small Starting from a point $y_0 \in S(\bar x_0)$ such that the
number of its antipodal (or \emph{cut-locus}) components is maximal, we choose $x_0=(y_0^\cdot,x_0^{\cdot \cdot})=(-\bar x_0^\cdot,x_0^{\cdot \cdot}) \in M^\cdot \times M^{\cdot \cdot}=M$ so that
$x_0^{\cdot \cdot}$ is an exposed point for $S^{\cdot \cdot}$ (in some suitable system of coordinates).}}
\end{figure}

One can assume with a further rearrangement of the product
$M^{\cdot \cdot} = M^{a_0 +1} \times \ldots \times M^k$ that there
exists $b_0 \in \{a_0 , \ldots, k\} $ with the following two
properties:
\begin{enumerate} \item[1.] For each $i_1 \in \{ a_0 +1, \ldots, b_0\}$, there
exists $\bar y_{i_1} \in \partial^c \phi (x_0)$ with
\begin{align}\label{E:y j in cut}
\bar y_{i_1}^{i_1} & = -x_0^{i_1}.
\end{align}
\item[2.] For every $\bar y \in
\partial^c \phi(x_0)$,
\begin{align}\label{E:y j not in cut}
\bar y^j &\ne -x_0^j  \qquad \hbox{(or equivalently $\bar y^j
\notin \cCutb (x_0^j)=\Cut(x_0^j)$)}   \qquad \hbox{ for $j = b_0+1,\ldots, k$}.
\end{align}
(If $b_0=a_0$, $\{ a_0 +1, \ldots, b_0\}=\emptyset$.)
\end{enumerate}
After this rearrangement, define
\begin{align*}M'=\bar M' = M^1 \times \ldots \times M^{b_0} \qquad
M''=\bar M''= M^{b_0+1} \times \ldots \times M^k
\end{align*}
The expressions $A'$, $A''$ will be used to denote things defined for elements in $M'$, $M''$, respectively. For example,
\begin{align*}
& y = (y',y'' ), \ \
\bar y = (\bar y', \bar y '') \in M= M' \times M'' = \bar M' \times \bar M'',\\
&  c(y, \bar y) = c'(y', \bar y') + c''(y'', \bar y''),
\end{align*}
and we have the identification
\begin{align*}
T_{\bar x_0}^* \bar M = T^*_{\bar x'_0 } \bar M' \times T^*_{\bar x''_0 } \bar M'', \qquad
\bar M^*(\bar x_0) = \bar M'^*(\bar x_0')\times \bar M''^*(\bar x_0'').
\end{align*}
In the following $n'=\dim M'$, $n''=\dim M''$ and $\pi'$, $\pi''$ denote the canonical
projections from $M$ to $M'$, $M''$, respectively.

\subsection{Analysis near the cut-exposed point.}\label{S:analysis
near cut-exposed}

In this subsection we construct a family of $c$-sections
$Z_{\e,\d,h}$ of $\phi$   near the cut-exposed point $x_0$ defined
in \eqref{x 0}. Regarding these $c$-sections, two important
results (Proposition~\ref{P:d i h i} and \ref{P:regular in ''})
are obtained. In later subsections we will show an Alexandrov type
inequality for $Z_{\e, \d, h}$ which will be paired with the other
Alexandrov type inequality \eqref{eq:left Alex} to lead a
contradiction to the existence of such $x_0$,  thus finishing the proof of Theorem~\ref{T:stay-away}.

Recall the affine function $L$  on $T^*_{\bar x_0^{\cdot \cdot}}
\bar M^{\cdot \cdot}$ given in \eqref{eq:L}. After modifying $L$ by
multiplying it by an appropriate positive constant, there exists a geodesic curve
$[0,1]\ni\d \mapsto \bar x_\d ^{\cdot \cdot} \in \bar M^{\cdot
\cdot} (x_0^{\cdot \cdot})$ starting from $\bar x_0^{\cdot \cdot}$
such that  for the linear map $\nabla L$ on $T^*_{\bar x^\cdd}\bar M^\cdd$,
\begin{align}\label{x d}
\nabla L  (q^\cdd-q_0^\cdd) = \<\frac{\partial}{\partial t}
\Big{|}_{t=0}\bar x_{t}^\cdd , q^\cdd-q_0^\cdd\>.
\end{align}
Consider a $c^\cdot$-segment $[0,1]\ni\e \to \bar x_\e^\cdot \in
\bar M^\cdot $ with respect to $x_0^\cdot$ connecting the point
$\bar x_0^\cdot  $ to  its antipodal point $\bar x_{1/2}^\cdot =
x_0^\cdot$ then to $\bar x_1^\cdot =\bar x_0^\cdot $. ($\bar x_\e$
is nothing else than a closed geodesic starting from $\bar x_0^\cdot$ and
passing through $x_0^\cdot=-\bar x_0^\cdot$ at
$\e=1/2$.) Define
\begin{align}\label{E: bar x e d}
\bar x_{\e, \d}:= (\bar x_\e^\cdot, \bar x_\d^{\cdot \cdot}) \in
M= M^{\cdot} \times M^{\cdot \cdot}.
\end{align}
Obviously $\bar x_{0,0} =\bar x_0$. Two important properties
follow:
\begin{enumerate}
\item[(a)] Since $\bar x_\e^\cdot \in \bar M^\cdot(x_0^\cdot)$
for $\e \in (0,1)$ and  $\bar x_0^\cdd \in \bar
M^\cdd(x_0^\cdd)$ we have
$$
x_0 \in M(\bar x_{\e, \d})= M^\cdot ( \bar x_\e^\cdot) \times  M^{\cdot \cdot} (\bar x_\d^{\cdot \cdot}) \qquad  \hbox{$\forall\, 0 < \e < 1$, $\delta \ge 0$ small}.
$$
\item[(b)] Since $\bar x_{\e, \d}^{\cdot}= \bar x_\e ^\cdot \ne
\bar x_0^\cdot$ for $\e \in (0,1)$, $\bar x_1^\cdot=\bar x_0^\cdot$, and $\bar
x_0^\cdot = -x_0^\cdot$, for every $\e \in (0,1)$ and $\d \in [0,1]$ we have
from Assumption~\ref{A:DASM+} for each $c^i$,
\begin{align}\label{ineq: dasm dot}
-c^\cdot (x^\cdot, \bar x_{\e, \d}^\cdot) + c^\cdot (x_0^\cdot, \bar x_{\e, \d}^\cdot)
\leq - c^\cdot
(x^\cdot, \bar x_0^\cdot) + c^\cdot (x_0^\cdot, \bar x_{0}^\cdot)
\qquad \forall\, x^\cdot \in M^\cdot,
\end{align}
with equality 
only when $x^\cdot =x_0^\cdot$. (See for instance Lemma~\ref{L:whole image} below.)
\end{enumerate}

Consider now the $c$-section $Z_{\e,\d, h}$ obtained by cutting the graph of $\phi$
by the graph of $-c( \cdot , \bar x_{\e, \d})+ c(x_0, \bar x_{\e,\d} ) + h$, that is
\begin{align}\label{eq:Z e d h}
Z_{\e,\d, h} :=\{ x \in M \ | \ \phi  (x) -\phi (x_0) + c(x, \bar
x_{\e, \d}) - c(x_0, \bar x_{\e, \d}) \le h  \}.
\end{align}
As it can be easily seen by moving down the graph of $-c( \cdot , \bar x_{\e, \d})$
and lift it up until it touches the graph of $\phi$, $\bar x_{\e, \d} \in \partial \phi (Z_{\e,\d,h})$.
Hence, thanks to Loeper's maximum principle ({\bf DASM}) we have
\begin{align}\label{E:x e d in c-sub}
\bar x_{\e, \d} \in \partial^c \phi (Z_{\e,\d,h}).
\end{align}
\begin{proposition}
The following equality holds.
 \begin{align}\label{Z e 0 0}
Z_{\e, 0, 0}= S(\bar x_{\e, 0}) = S(\bar x_0) \cap \big{(} \{ x_0^\cd \}\times M^\cdd \big{)}.
\end{align}
 \end{proposition}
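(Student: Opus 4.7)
The plan is to prove the two equalities separately, exploiting the product structure of the cost together with the strict maximum principle \eqref{ineq: dasm dot} supplied by Assumption~\ref{A:DASM+} applied to the nonconstant $c^\cd$-segment $\e\mapsto \bar x_\e^\cd$ that passes through the antipode of $x_0^\cd$.

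First I would establish the preliminary fact that $x_0 \in S(\bar x_{\e,0})$, equivalently $\bar x_{\e, 0}\in\partial^c\phi(x_0)$. Starting from the sub-gradient inequality $\phi(y)-\phi(x_0) \ge -c(y,\bar x_0)+c(x_0,\bar x_0)$ that expresses $\bar x_0\in\partial^c\phi(x_0)$, I would split both sides along the product decomposition of $c$. Since $\bar x_{\e,0}^\cdd=\bar x_0^\cdd$, the $c^\cdd$ contributions are identical; since $\bar x_{\e,0}^\cd = \bar x_\e^\cd$, the inequality \eqref{ineq: dasm dot} allows me to replace the $c^\cd$ part evaluated at $\bar x_0^\cd$ by the (no larger) $c^\cd$ part at $\bar x_\e^\cd$. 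Combining these gives $\phi(y)-\phi(x_0)\ge -c(y,\bar x_{\e,0})+c(x_0,\bar x_{\e,0})$ for every $y \in M$. The identity $Z_{\e,0,0}=S(\bar x_{\e,0})$ then follows at once: by $c$-convexity of $\phi$ and Lemma~\ref{L:reci}, $\phi(x)+c(x,\bar x_{\e,0})\ge \phi(x_0)+c(x_0,\bar x_{\e,0})=-\bar\phi(\bar x_{\e,0})$ holds for every $x$, with equality precisely when $x\in S(\bar x_{\e,0})$, so the zero-height section coincides with the contact set.

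For the second equality I would argue by double inclusion. The inclusion $\supset$ is routine: if $x=(x_0^\cd,x^\cdd)\in S(\bar x_0)$, then the $M^\cd$-contributions drop out of the identity expressing $\bar x_0\in\partial^{\bar c}\bar\phi$ at $\bar x_0$, so one may freely replace $\bar x_0^\cd$ by $\bar x_\e^\cd$ and combine with Step~1 to get $x \in S(\bar x_{\e,0})$. For $\subset$, take $x\in S(\bar x_{\e,0})$. Using that both $x$ and $x_0$ lie in $S(\bar x_{\e,0})$ one has the equality $\phi(x)-\phi(x_0)=-c(x,\bar x_{\e,0})+c(x_0,\bar x_{\e,0})$; subtracting it from the sub-gradient inequality $\phi(x)-\phi(x_0)\ge -c(x,\bar x_0)+c(x_0,\bar x_0)$ and cancelling the $M^\cdd$-pieces yields the \emph{reverse} of \eqref{ineq: dasm dot}. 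The strict form of Assumption~\ref{A:DASM+} (valid because $\bar x_\e^\cd$ is a nonconstant $c^\cd$-segment for $\e\in(0,1)$) then forces $x^\cd=x_0^\cd$, and substituting back turns the previous equality into $\phi(x)-\phi(x_0)=-c(x,\bar x_0)+c(x_0,\bar x_0)$, which together with $\bar x_0 \in \partial^c\phi(x_0)$ yields $\bar x_0\in\partial^c\phi(x)$, i.e. $x\in S(\bar x_0)\cap(\{x_0^\cd\}\times M^\cdd)$. The only real subtlety is making sure to invoke the equality-only-at-$x_0^\cd$ clause of Assumption~\ref{A:DASM+}; everything else is a bookkeeping consequence of the tensor product form of the cost.
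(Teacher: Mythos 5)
Your proof is correct and rests on exactly the same two ingredients as the paper's: the product decomposition of the cost combined with the strict inequality \eqref{ineq: dasm dot}, plus the fact that $\bar x_0 \in \partial^c\phi(x_0)$. The paper compresses this into a single chain of inequalities showing the defining quantity of $Z_{\e,0,0}$ is nonnegative (with equality pinned down by the equality case of \eqref{ineq: dasm dot}), while you unfold the same logic into three modular steps; the substance is identical.
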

\begin{proof}
From \eqref{ineq: dasm dot},
\begin{align*}
 & \phi(x)- \phi (x_0) + c(x, \bar x_{\e, 0})-c(x_0, \bar x_{\e,0})\\
& = \phi(x)- \phi (x_0) + c^\cdot(x^\cdot, \bar x_{\e, 0}^\cdot )-c^\cdot (x_0^\cdot , \bar x_{\e,0}^\cdot) + c^{\cdot \cdot}(x^{\cdot \cdot}, \bar x_{0}^{\cdot \cdot})-c^{\cdot \cdot}(x_0^{\cdot \cdot}, \bar x_{0}^{\cdot \cdot})\\
& \ge \phi(x)- \phi (x_0) + c^\cdot(x^\cdot, \bar x_{ 0}^\cdot )-c^\cdot (x_0^\cdot , \bar x_{0}^\cdot) + c^{\cdot \cdot}(x^{\cdot \cdot}, \bar x_{0}^{\cdot \cdot})-c^{\cdot \cdot}(x_0^{\cdot \cdot}, \bar x_{0}^{\cdot \cdot})\\
&= \phi(x)- \phi (x_0) + c(x, \bar x_{0} )-c (x_0 , \bar x_{0})\ge
0.
\end{align*}
This, together with the equality case for  \eqref{ineq: dasm dot},
yields  \eqref{Z e 0 0}.
\end{proof}

The following  two propositions are essential in our proof of
Theorem~\ref{T:stay-away}.
\begin{proposition}\label{P:d i h i}
Fix $0<  \e < 1$. Then, for any sequences $ \d_i , h_i \to 0$ with $\frac{h_i}{\d_i} \to 0$, we have
\begin{align*}
Z_{\e,\d_i,h_i} \to \{ x_0 \} \hbox{ as $i \to \infty $.}
\end{align*}
\end{proposition}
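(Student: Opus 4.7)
\begin{Proof}[Plan]
The proof will proceed by a compactness and limit argument. Since each $Z_{\e,\d_i,h_i}$ is a closed subset of the compact manifold $M$, any subsequential Hausdorff limit is a compact set containing $x_0$ (by definition of $Z_{\e,\d_i,h_i}$ and since $x_0 \in S(\bar x_0)$). Thus it suffices to show that every $x_\infty = \lim_{i\to\infty} x_i$, with $x_i \in Z_{\e,\d_i,h_i}$, equals $x_0$. I will extract this in two steps: a zero-order step using only $\d_i,h_i \to 0$ which pins down $x_\infty^\cd$, and a first-order step using $h_i/\d_i \to 0$ which pins down $x_\infty^\cdd$.

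\textbf{Step 1 (zero-order, $x_\infty^\cd = x_0^\cd$).} Writing $c=c^\cd + c^\cdd$ and $\bar x_{\e,\d_i}=(\bar x_\e^\cd, \bar x_{\d_i}^\cdd)$, the defining inequality for $Z_{\e,\d_i,h_i}$ reads
\begin{equation}\label{eq:plan-defining}
\phi(x_i)-\phi(x_0) + \big[c^\cd(x_i^\cd,\bar x_\e^\cd)-c^\cd(x_0^\cd,\bar x_\e^\cd)\big] + \big[c^\cdd(x_i^\cdd,\bar x_{\d_i}^\cdd)-c^\cdd(x_0^\cdd,\bar x_{\d_i}^\cdd)\big] \le h_i.
\end{equation}
Passing to the limit $i\to\infty$ (using continuity of $\phi$ and of $c^\cd, c^\cdd$ near $\bar x_\e^\cd$ and $\bar x_0^\cdd$, where the latter is justified since $x_0^\cdd$ avoids $\Cut(\bar x_0^\cdd)$) yields
\[
\phi(x_\infty)-\phi(x_0) + \big[c^\cd(x_\infty^\cd,\bar x_\e^\cd)-c^\cd(x_0^\cd,\bar x_\e^\cd)\big] + \big[c^\cdd(x_\infty^\cdd,\bar x_0^\cdd)-c^\cdd(x_0^\cdd,\bar x_0^\cdd)\big] \le 0.
\]
Applying \eqref{ineq: dasm dot} (the strict form of {\bf DASM}$^+$ along each $M^i$, $i \le a_0$) and then using $\bar x_0 \in \partial^c\phi(x_0)$, the above inequality must in fact be an equality, and the equality case of \eqref{ineq: dasm dot} forces $x_\infty^\cd = x_0^\cd$. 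In particular, $x_\infty \in S(\bar x_0) \cap (\{x_0^\cd\} \times M^\cdd)$, and by maximality of $a_0$ we have $x_\infty^\cdd \in M^\cdd(\bar x_0^\cdd)$, so that $q^\cdd(x_\infty^\cdd) := -D_{\bar x^\cdd}c^\cdd(x_\infty^\cdd,\bar x_0^\cdd)$ lies in the convex set $\tilde S^\cdd$.

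\textbf{Step 2 (first-order, $x_\infty^\cdd = x_0^\cdd$).} Subtracting the $c$-convexity inequality $\phi(x_i) - \phi(x_0) + c(x_i,\bar x_0) - c(x_0,\bar x_0) \ge 0$ from \eqref{eq:plan-defining} and applying \eqref{ineq: dasm dot} gives
\[
\big[c^\cdd(x_i^\cdd,\bar x_{\d_i}^\cdd) - c^\cdd(x_i^\cdd,\bar x_0^\cdd)\big] - \big[c^\cdd(x_0^\cdd,\bar x_{\d_i}^\cdd) - c^\cdd(x_0^\cdd,\bar x_0^\cdd)\big] \le h_i.
\]
Dividing by $\d_i$, using $h_i/\d_i \to 0$, and passing to the limit (the curve $\d\mapsto \bar x_\d^\cdd$ is smooth and $c^\cdd(\cdot,\bar y)$ is smooth in $\bar y$ near $\bar x_0^\cdd$ uniformly for $x^\cdd$ near $x_\infty^\cdd$ and $x_0^\cdd$), we obtain the first-order inequality
\[
\Big\langle D_{\bar x^\cdd}c^\cdd(x_\infty^\cdd,\bar x_0^\cdd) - D_{\bar x^\cdd}c^\cdd(x_0^\cdd,\bar x_0^\cdd),\ \tfrac{\p}{\p t}\big|_{t=0}\bar x_t^\cdd\Big\rangle \le 0,
\]
which by definition of $q^\cdd(\cdot)$ reads $\langle q^\cdd(x_\infty^\cdd) - q_0^\cdd, \tfrac{\p}{\p t}|_{t=0}\bar x_t^\cdd\rangle \ge 0$. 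By the choice \eqref{x d} of $\bar x_\d^\cdd$, this is exactly $L(q^\cdd(x_\infty^\cdd)) \ge 0$. Combined with $L \le 0$ on $\tilde S^\cdd$ and the fact that $q_0^\cdd$ is the unique zero of $L$ on $\tilde S^\cdd$ (the exposed-point property \eqref{eq:L}), we conclude $q^\cdd(x_\infty^\cdd) = q_0^\cdd$, hence $x_\infty^\cdd = x_0^\cdd$.

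\textbf{Main obstacle.} The delicate step is the first-order analysis in Step 2: one must legitimately pass to the limit in the divided difference, which requires that $c^\cdd$ stays smooth on a fixed neighborhood of $\{x_0^\cdd,x_\infty^\cdd\}\times\{\bar x_0^\cdd\}$ (ensured by Step 1 and the definition of $a_0$), and one must then recognize the resulting linear inequality as $L\ge 0$ at $q^\cdd(x_\infty^\cdd)$ via the exact tangential calibration \eqref{x d}. The role of the scaling $h_i/\d_i \to 0$ is precisely to extract this first-order information, which is what allows the exposed-point property to force collapse onto $x_0$.
\end{Proof}
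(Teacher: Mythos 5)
Your proof is correct and follows essentially the same two-step strategy as the paper: Step 1 pins down $x_\infty^\cd = x_0^\cd$ by passing to the limit and invoking Assumption \ref{A:DASM+} (via the inequality \eqref{ineq: dasm dot}, this is exactly the content of \eqref{Z e 0 0}); Step 2 pins down $x_\infty^\cdd = x_0^\cdd$ by taking a first-order expansion in $\d$ and recognizing the resulting linear inequality as $L \ge 0$ via the calibration \eqref{x d}, which is then incompatible with the exposed-point property \eqref{eq:L}. The algebra in Step 2 is exactly the paper's chain of inequalities.

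The one place you genuinely depart from the paper is in how you justify the Taylor expansion in Step 2. The paper first fixes a small closed neighborhood $U$ of $x_0$ on which the second derivatives of $(x,t) \mapsto c(x,\bar x_{\e,t\d})$ are uniformly bounded, derives $L(q^\cdd - q_0^\cdd) \ge -h/\d - O(\d)$ for $x \in Z_{\e,\d,h} \cap U$, and then needs a separate path-connectivity argument (the sections $Z_{\e,\d,h}$ are convex in the $q$-coordinates, hence path-connected, so the limit set is path-connected, and an isolated point forces it to be a singleton) to upgrade $Z_\infty \cap U = \{x_0\}$ to $Z_\infty = \{x_0\}$. You instead observe that Step 1 already gives $x_\infty \in S(\bar x_0) \cap (\{x_0^\cd\} \times M^\cdd)$, whence by maximality of $a_0$ the regular component $x_\infty^\cdd$ lies in the open set $M^\cdd(\bar x_0^\cdd)$; this lets you do the divided-difference argument on a compact neighborhood of $x_\infty^\cdd$ directly, without restricting a priori to a neighborhood of $x_0$ and without needing path-connectivity at the end. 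This is a modest but real simplification. Two very minor remarks: continuity of $c^\cdd$ in the limit in Step 1 does not require $x_0^\cdd \notin \Cut(\bar x_0^\cdd)$ (the cost is Lipschitz everywhere, only differentiability fails at cut pairs), so that parenthetical is unnecessary; and you should make explicit that the subsequential-limit characterization suffices for Hausdorff convergence since $x_0 \in Z_{\e,\d_i,h_i}$ for all $i$ and $M$ is compact.
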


\begin{proof}
Fix arbitrary sequences $\d_i, \frac{h_i}{\d_i} \to 0$, and denote
\begin{align*}
Z_\infty= \lim_{i\to \infty} Z_{\e, \d_i, h_i}
= \{ z_\infty \in M \ | \ \hbox{there exists a sequence $z_i \in Z_{\e, \d_i, h_i}$ with $z_i \to z_\infty \in M$} \}.
\end{align*}
 By continuity,
$z_\infty \in Z_{\e, 0,0} $ 
for each $z_\infty \in Z_\infty$, and thus by \eqref{Z e 0 0} $z_\infty ^\cdot =x_0^\cdot$.

To show $z_\infty ^\cdd = x_0^\cdd$, we first let $\d > 0$ be sufficiently small and fix a small (closed) neighborhood, say $U$,  of $x_0$ so that all the derivatives (up to the second order) of the function 
 $ U\times [0,1] \ni (x, t) \to c(x, \bar x_{\e, t\d})$ are uniformly bounded.
Then, for  $x\in Z_{\e, \d, h} \cap U$ the following inequalities hold:
\begin{align*}
 h & \ge \phi(x)-\phi(x_0) + c^\cdot (x^\cdot, \bar x_\e^\cdot) - c^\cdot (x_0^\cdot, \bar x_\e ^\cdot) + c^\cdd (x^\cdd, \bar x_\d^\cdd) - c^\cdd (x_0 ^\cdd, \bar x_\d^\cdd)\\
& \ge -c^\cd (x^\cd, \bar x_0^\cd) + c^\cd (x_0^\cd, \bar x_0^\cd) - c^\cdd(x^\cdd, \bar x_0^\cdd) + c^\cdd(x_0^\cdd, \bar x_0^\cdd)\\
& \ \ \  + c^\cdot (x^\cdot, \bar x_\e^\cdot) - c^\cdot (x_0^\cdot, \bar x_\e ^\cdot) + c^\cdd (x^\cdd, \bar x_\d^\cdd) - c^\cdd (x_0 ^\cdd, \bar x_\d^\cdd) \qquad \hbox{ (since $\bar x_0 \in \partial^c\phi(x_0) $)} \\
 & \ge -c^\cdd (x^\cdd, \bar x_0^\cdd) + c^\cdd (x_0^\cdd, \bar x_0^\cdd)
+ c^\cdd (x^\cdd, \bar x_\d^\cdd) - c^\cdd (x_0^\cdd, \bar x_\d^\cdd) \qquad \hbox{(by \eqref{ineq: dasm dot})} \\
& \ge \< D_{\bar x} c^\cdd (x^\cdd, \bar x_0^\cdd) - D_{\bar x} c^\cdd (x_0^\cdd, \bar x_0^\cdd),\frac{\partial}{\partial t}\Big{|}_{t=0}\bar x_{t\d}\> + O(\d^2)\\
&=   \d\,\nabla L ( D_{\bar x} c^\cdd (x^\cdd, \bar x_0^\cdd) - D_{\bar
x} c^\cdd (x_0^\cdd, \bar x_0^\cdd)) + O(\d^2) \qquad \hbox{(by
\eqref{x d} )}
\end{align*}
Use the coordinate $q^\cdd (x^\cdd) = -D_{\bar x^\cdd} c^\cdd (x^\cdd, \bar x_0^\cdd)$ to rewrite this as
\begin{align*}
 \nabla L (q^\cdd - q_0^\cdd) \ge -\frac{h}{\d} -O(\d).
\end{align*}
Since $L(q_0^\cdd)=0$ this gives
\begin{align*}
L (q^\cdd -q_0^\cdd) &\ge -\frac{h}{\d} - O(\d).
\end{align*}
Consider now the sequences $\d_i, \frac{h_i}{\d_i} \to 0$, and any convergent
subsequence of $z_i \in Z_{\e, \d_i, h_i}  \cap U$. For the limit
$z_\infty$, let $q^\cdd = -D_{\bar x^\cdd} c(z_\infty^\cdd, \bar
x_0^\cdd)$. Then $q_\infty^\cdd \in \tilde S^\cdd$  (since $z_\infty \in Z_{\e, 0,0} \subset S(\bar x_0)$ by \eqref{Z e 0 0}), and  from the
above inequality we get
\begin{align*}
 L(q_\infty^\cdd - q_0^\cdd) \ge 0
\end{align*}
which forces $q_\infty^\cdd = q_0^\cdd$ by \eqref{eq:L}. This
shows $z_\infty^\cdd =x_0^\cdd$, and thus $Z_\infty \cap U =\{
x_0\}$. To finish the proof notice that each $Z_{\e,\d,h}$ is path
connected and so is the limit $Z_\infty$. (This path-connectivity
can be seen by noticing that the set $Z_{\e, \d, h}$ is convex in
the coordinates $q(x)= -D_{\bar x} c(x, \bar x_{\e, \d}) \in
T_{\bar x_{\e,\d}}^*\bar M$.) Therefore $Z_\infty =\{ x_0\}$, as
desired.
\end{proof}

\begin{proposition}\label{P:regular in ''}
There exists $\d_0=\d_0(\e) >0$ such
that, if $0\le\d \le \d_0$, $0\le h \le \d^2$, then
for each  $\bar y= (\bar y', \bar y'')\in 
\partial^c\phi(Z_{\e, \d,h}) $ the component $\bar y''$ stays away from the cut-locus of the component $z''$ of $z$ (i.e., $\bar y'' \in \bar M''(z'')$) for every $z \in Z_{\e, \d, h}$. Equivalently $\bar \pi''(\partial^c \phi(Z_{\e,\d,h}) ) \subset \bigcap_{z \in Z_{\e,\d,h}} \bar M''(z'')$.
\end{proposition}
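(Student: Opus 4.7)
The plan is to argue by contradiction using compactness, continuity of the cost, closedness of the graph of the $c$-subdifferential, and most crucially the shrinking of the sections provided by Proposition~\ref{P:d i h i}, together with the defining property of the index $b_0$ (namely condition \eqref{E:y j not in cut}, which says that no $\bar y \in \partial^c\phi(x_0)$ has a component $\bar y^j = -x_0^j$ for $j > b_0$).

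Suppose the conclusion fails. Then for $\varepsilon \in (0,1)$ fixed, there are sequences $\delta_i \to 0$, $h_i \in [0,\delta_i^2]$, points $z_i, w_i \in Z_{\varepsilon,\delta_i,h_i}$, and $\bar y_i \in \partial^c\phi(z_i)$ such that $\bar y_i'' \notin \bar M''(w_i'')$. Since the $''$-factor is a product of finitely many spheres, this means there exists an index $j_i \in \{b_0+1,\ldots,k\}$ with $\bar y_i^{j_i} = -w_i^{j_i}$. By extracting a subsequence I may assume $j_i \equiv j_0$ for some fixed $j_0 > b_0$, and (by compactness of $\bar M$) that $\bar y_i$ converges to some $\bar y_\infty \in \bar M$.

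Now I apply Proposition~\ref{P:d i h i}: since $h_i/\delta_i \le \delta_i \to 0$, we have $Z_{\varepsilon,\delta_i,h_i} \to \{x_0\}$, and hence $z_i \to x_0$ and $w_i \to x_0$. In particular $w_i^{j_0} \to x_0^{j_0}$, so passing to the limit in the antipodal relation gives $\bar y_\infty^{j_0} = -x_0^{j_0}$. On the other hand, $\bar y_i \in \partial^c\phi(z_i)$ means $\phi(y) - \phi(z_i) \ge -c(y,\bar y_i) + c(z_i,\bar y_i)$ for all $y \in M$; passing to the limit using continuity of $c$ and of $\phi$ (which is semiconvex, hence continuous) yields $\bar y_\infty \in \partial^c\phi(x_0)$.

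But this directly contradicts the defining property \eqref{E:y j not in cut} of $b_0$: every $\bar y \in \partial^c\phi(x_0)$ satisfies $\bar y^j \ne -x_0^j$ for all $j > b_0$. Therefore no such sequences exist, and the existence of $\delta_0(\varepsilon)$ with the stated property follows. The only step that might look subtle is the limit passage $\bar y_i \to \bar y_\infty \in \partial^c\phi(x_0)$, but this is just the standard closedness of the $c$-subdifferential graph under uniform convergence, so the real work is entirely done by Proposition~\ref{P:d i h i}; the choice $h \le \delta^2$ appears only to guarantee $h/\delta \to 0$ so that proposition applies.
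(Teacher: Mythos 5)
Your proof is correct and follows exactly the same compactness-plus-contradiction strategy as the paper's proof: assume failure along sequences $\d_i, h_i \to 0$ with $h_i \le \d_i^2$, invoke Proposition~\ref{P:d i h i} to shrink $Z_{\e,\d_i,h_i}$ to $\{x_0\}$, pass to the limit using closedness of the $c$-subdifferential, and contradict the defining property \eqref{E:y j not in cut} of $b_0$. The extra detail you include (extracting a fixed antipodal component index $j_0 > b_0$ along a subsequence) is implicit in the paper's more terse statement "$\bar y_\infty'' \in \Cut(x_0'')$", but it is the same argument.
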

\begin{proof}
Suppose the statement is false along some sequence $\d_i, h_i \to 0 $ with $h_i\leq \d_i^2$,
and let
$x_{i}, z_i \in Z_{\e, \d,h}$, $\bar y_{i} \in \partial^c \phi (x_i)$ be such that $ \bar y_i '' \in \Cut (z_i'')$. Since $Z_{\e, \d_i, h_i}  \to \{ x_0\}$, both $x_i, z_i  \to x_0$.
Moreover if $\bar y_\infty$ is a cluster point for $\{\bar y_i\}_{i\in\N}$,
then $\bar y_\infty \in \partial^c \phi (x_0)$ and $\bar y_\infty'' \in \Cut (x_0'')$. This contradicts
the choice of $M''$ (see \eqref{E:y j not in cut}) and concludes the proof.
\end{proof}

\subsection{An Alexandrov type estimate near the  cut-exposed
point}\label{S:right Alex}

We state the main theorem for the rest of the paper.

\begin{theorem}[\bf Alexandrov lower bound near cut-exposed point]\label{T:right Alex} Fix $0< \e <1$,
and let $Z_{\e,\d,h}$ be as in \eqref{eq:Z e d
h}. There exists $\d_1=\d_1(\e) >0$ so
that, if $0< \d\le \d_1$, then there exists $h_1=h_1(\e,\d)$ such that
\begin{align}\label{eq:right Alex}
h^{\dim M} \lesssim \e^{a_0}
|Z_{\e,\d,h}| |\partial^c \phi (Z_{\e,\d,h})| \qquad \forall\,0< h \le h_1(\e,\d),
\end{align}
where $\lesssim$ is independent of $\e,\d$ and $h$.
\end{theorem}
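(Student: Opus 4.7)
The plan is to realize the claim as an Alexandrov-type lower bound on the product $|Z_{\e,\d,h}|\cdot|\partial^c\phi(Z_{\e,\d,h})|$, obtained by exhibiting, in each of the $k$ spherical factors, a pair of sets $S_i\subset Z_{\e,\d,h}$ and $\bar S_i\subset \partial^c\phi(Z_{\e,\d,h})$ with product bound $\Haus{n_i}(S_i)\cdot\Haus{n_i}(\bar S_i)\gtrsim h^{n_i}/\e^{\mathbf{1}_{i\le a_0}}$. Lemma~\ref{lemma:mutiple orthogonal sections}, applied separately to the $S_i$'s and to the $\bar S_i$'s in appropriate convex coordinate charts, will then multiply these bounds across $i=1,\ldots,k$ to yield $|Z_{\e,\d,h}|\cdot|\partial^c\phi(Z_{\e,\d,h})|\gtrsim h^n/\e^{a_0}$, as claimed. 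The gain $1/\e$ per cut-locus factor reflects the \emph{conical} (rather than quadratic) structure of $-c^i(\cdot,\bar x_0^i)$ at $x_0^i=-\bar x_0^i$, already highlighted in the sketch following Corollary~\ref{C:regularity}.

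I would begin by passing to the coordinate chart of Section~\ref{SS:coordinate} centered at $\bar x_{\e,\d}$, i.e., $q=-D_{\bar x}c(\cdot,\bar x_{\e,\d})$. Thanks to Propositions~\ref{P:d i h i} and~\ref{P:regular in ''}, for $\d\le\d_1$ and $h\le\d^2$ this change of variables is smooth, non-degenerate and uniformly bi-Lipschitz on all of $Z_{\e,\d,h}$, as well as on the $\cexpb_{\bar x_{\e,\d}}$-preimage of $\partial^c\phi(Z_{\e,\d,h})$; Loeper's maximum principle (Lemmas~\ref{L:DASM for c-sub} and~\ref{L:c-convex c-section}) turns both sets convex in these coordinates. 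Measures may be computed in $q$-coordinates up to universal constants, and Lemma~\ref{lemma:mutiple orthogonal sections} applies directly.

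The quantitative heart of the argument lies in the factorwise construction of $S_i$ and $\bar S_i$. For $i\le a_0$, Taylor-expanding in exponential coordinates about $x_0^i$ and using $\dist_i(y,\bar x_0^i)=\pi r_i-\dist_i(y,x_0^i)$ near $x_0^i$ together with the identification of $-\bar x_\e^i$ as a point of norm $\sim\e$, one obtains
\[
-c^i(y,\bar x_0^i)+c^i(y,\bar x_\e^i)-\bigl[\text{same at }y=0\bigr] = (f^i)'(\pi r_i)\bigl(|y|-|y-v^i|+|v^i|\bigr)-(f^i)''(\pi r_i)(y\cdot v^i)+\text{h.o.t.},
\]
with $v^i\in T_{x_0^i}M^i$, $|v^i|\sim\e$. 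The bracketed term vanishes on the backward ray $\{y=-tv^i/|v^i|:t\ge 0\}$, while the quadratic term grows there at rate $\e\cdot t$; combining with the $c$-convexity inequality $\phi(x)-\phi(x_0)\ge-c(x,\bar x_0)+c(x_0,\bar x_0)$ and an $O(\d)$ correction from the $\bar x_\d^\cdd$-movement (controlled by the exposed-point choice of $q_0^\cdd$ via $L$ from \eqref{eq:L}, together with $h\le\d^2$), the defining inequality of $Z_{\e,\d,h}$ forces $S_i$ to contain a backward segment of length $\gtrsim h/\e$ and transverse radius $\sim h$, giving $\Haus{n_i}(S_i)\gtrsim h^{n_i}/\e$. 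Dually, $\bar S_i$ can be taken to be the $c$-exponential image of the cone of subgradients of $-c^i(\cdot,\bar x_0^i)$ at $x_0^i$, a ball of radius $(f^i)'(\pi r_i)\sim 1$, giving $\Haus{n_i}(\bar S_i)\gtrsim 1$; the product is $\sim h^{n_i}/\e$, as desired. For $i>a_0$ the cost is smooth near $(x_0^i,\bar x_0^i)$ and the standard Alexandrov argument (cf. Lemma~\ref{L:left Alex} applied factorwise) produces $S_i,\bar S_i$ with $\Haus{n_i}(S_i)\cdot\Haus{n_i}(\bar S_i)\gtrsim h^{n_i}$.

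Multiplying these per-factor bounds via Lemma~\ref{lemma:mutiple orthogonal sections} concludes the proof:
\[
|Z_{\e,\d,h}|\cdot|\partial^c\phi(Z_{\e,\d,h})|\gtrsim \prod_{i=1}^k \Haus{n_i}(S_i)\cdot\Haus{n_i}(\bar S_i)\gtrsim \prod_{i=1}^k\frac{h^{n_i}}{\e^{\mathbf{1}_{i\le a_0}}}=\frac{h^n}{\e^{a_0}}.
\]
The main obstacle is the construction of $S_i$ in the cut-locus factors: the $\bar x_\d^\cdd$-error must genuinely be $o(h/\e)$, which uses both the scaling $h\le\d^2$ and the exposed-point choice of $\bar x_\d^\cdd$ (ensuring that the first-order perturbation in the $M^\cdd$-variables vanishes on $\tilde S^\cdd$ and cannot compete with the cone slope $(f^i)'(\pi r_i)$). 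Once this is verified, the dual cone construction of $\bar S_i$ and the combinatorial assembly via Lemma~\ref{lemma:mutiple orthogonal sections} should be routine given the preparatory work of Sections~\ref{S:cut-exposed}--\ref{S:analysis near cut-exposed}.
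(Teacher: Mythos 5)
Your intuition for the cut-locus factors is sound and matches the paper: the conical structure of $-c^i(\cdot,\bar x_0^i)$ at $x_0^i=-\bar x_0^i$, cut by $-c^i(\cdot,\bar x^i_\e)$ with $\dist_i(\bar x_0^i,\bar x_\e^i)\sim\e$, yields slices $Z^i_{\e,\d,h}$ of size $\gtrsim h^{n_i}/\e$ (Proposition~\ref{P:|Z h i|}, Case~I), and Lemma~\ref{L:phi on M i x 0} supplies the full slices $M^i_{\bar y}\subset\partial^c\phi(x_0)$ of size $\gtrsim 1$ that you call $\bar S_i$. But there are genuine gaps. First, you lump all $i>a_0$ into a single ``regular'' case and propose a factorwise Alexandrov inequality $\Haus{n_i}(S_i)\Haus{n_i}(\bar S_i)\gtrsim h^{n_i}$ for each individual sphere. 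This is not valid: the convex set $\tilde Z_{\e,\d,h}$ and the potential $\varphi$ do not split as products across the spherical factors, so the eccentric directions of $\tilde\pi''(\tilde Z_{\e,\d,h})$ may lie diagonally across the factors, and individual-factor slices can be far smaller than what a factorwise bound would predict. The paper instead treats $M''=M^{b_0+1}\times\ldots\times M^k$ as a single unit, applies John's Lemma~\ref{L:John} to the whole projection $\tilde\pi''(\tilde Z_{\e,\d,h})$, and constructs the covectors $\hat p_i$ along the principal axes of that ellipsoid (Proposition~\ref{P:tilde C}); one cannot do this sphere by sphere. Note also that Lemma~\ref{L:left Alex}, which you cite as a template, is an \emph{upper} bound on $|Z|$; the lower bound you need is of a different, Caffarelli-type character, proved by the supporting-hyperplane/$c$-segment argument of Proposition~\ref{P:tilde C}.

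Second, even granting the $\tilde C$ construction, you do not address how an $n''$-dimensional set $\tilde C\subset\partial^c\phi(Z_{\e,\d,h})$ bounds the $n$-dimensional volume $|\partial^c\phi(Z_{\e,\d,h})|$ from below. This is Proposition~\ref{P:bound partial c by tilde C}, and it is subtle: one must thicken $\tilde C$ by the sets $C^1,\ldots,C^{b_0}$ (supplied by Lemma~\ref{L:phi on M i x 0}) into an $n$-dimensional convex body $\mathcal{A}=\co(C^1,\ldots,C^{b_0},\tilde C_0)$, and crucially one must keep $\mathcal{A}$ inside a \emph{fixed} compact subset of $M^*(x_0)$, uniformly in $\e,\d,h$. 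Since $\eta_\e^\cd$ approaches $\partial M^{\cd*}(x_0^\cd)$ as $\e\to 0$, taking $\tilde C_0=\eta(\tilde C)$ directly would fail; the paper cones from $(0,\eta_\d^\cdd)$ back to the slice at $\eta_\e^\cd/2$ precisely to fix this. Finally, your dichotomy $i\le a_0$ vs.\ $i>a_0$ misses the intermediate range $a_0<i\le b_0$, which the paper's Proposition~\ref{P:|Z h i|} (Case~II) handles separately: those factors contribute $h^{n_i}$ without the $1/\e$ gain, and this is why the final estimate carries $\e^{a_0}$ and not $\e^{b_0}$.
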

This result concludes the proof of Theorem \ref{T:stay-away},
since for $\e>0$ small enough and $\d,\frac{h}{\d}\to 0$ we have
$Z_{\e,\d,h} \to \{x_0\}$ (by Proposition~\ref{P:d i h i}),
and \eqref{eq:right Alex}
is in contradiction with \eqref{eq:left Alex}.

The following subsections are devoted to the proof of Theorem
\ref{T:right Alex}, that we divide into three parts. First, in
Section~\ref{S:cut-locus component} we get Alexandrov type
estimates for the sets obtained by the intersection of
$Z_{\e,\d,h}$ with the cut-locus components of $x_0$. In
Section~\ref{S:regular component}, we analyze the projection
$\pi''(Z_{\e,\d,h})$ of $Z_{\e,\d,h}$ onto the regular component
$M''$ of $x_0$. We construct a suitable convex set, say $\tilde C$,
which has size comparable to the image $\partial^c \phi (Z_{\e,\d,h})$,
and we get a version of the estimate \eqref{eq:right Alex} involving
$\tilde C$ and $\pi''(Z_{\e,\d,h})$ (see Proposition~\ref{P:tilde
C}(3)). Finally in
Section~\ref{S:final argument} we combine these results and conclude the proof.

\subsection{Proof of Theorem~\ref{T:right Alex} (Alexandrov lower bound near cut-exposed point): analysis in
 the cut-locus component $M'$}\label{S:cut-locus component} The main result
of this section is Proposition~\ref{P:|Z h i|} that gives an
Alexandrov type estimate for the intersection of $Z_{\e,\d,h}$
with the cut-locus components of $x_0$.

We start with a few elementary results.

\begin{lemma}\label{L:whole image}
Let $\S^n$ be the standard round sphere, and $c (x, \bar x) =
f(\dist (x, \bar x))$ for $(x, \bar x) \in \S^n \times \S^n$, where
$f$ is a  smooth strictly increasing function $f:\R_+ \to \R_+$. Assume that $c$ satisfies
 Assumption~\ref{A:DASM+} ({\bf DASM$^+$}). Then, for every $x, \bar x \in \S^n$,
\begin{align*}
-c(-\bar x, \bar y) + c(-\bar x, \bar x) \ge - c (x, \bar y) +
c(x, \bar x)  \qquad \forall\,\bar y \in \S^n,
\end{align*}
where $-\bar x$ denotes the antipodal point of $\bar x$.
Moreover equality holds if and only if $\bar y=\bar x$.
\end{lemma}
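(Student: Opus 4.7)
The plan is to reformulate the desired inequality as a statement about the function $\psi:\S^n\to\R$ defined by
\begin{align*}
\psi(\bar y):=c(x,\bar y)-c(-\bar x,\bar y),
\end{align*}
namely, that $\psi$ attains its global minimum on $\S^n$ uniquely at $\bar y=\bar x$; indeed, rearranging the inequality yields exactly $\psi(\bar y)\ge\psi(\bar x)$. The case $x=-\bar x$ is trivial, as the two sides then coincide for every $\bar y$, so from now on assume $x\ne-\bar x$.

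Since $c(w,\bar y)=f(\dist(w,\bar y))$ with $f$ smooth, and since on $\S^n$ the distance function $\dist(w,\cdot)$ fails to be smooth only at the antipode $-w$, the function $\psi$ is smooth on $\S^n$ outside the two points $-x$ and $\bar x$. By compactness $\psi$ attains its minimum. The crucial step is to rule out smooth critical points: at any $\bar y\notin\{-x,\bar x\}$, vanishing of $\nabla\psi(\bar y)$ gives
\begin{align*}
-D_{\bar y}c(x,\bar y)=-D_{\bar y}c(-\bar x,\bar y),
\end{align*}
and since $\bar y\ne-x$ and $\bar y\ne\bar x$ one has $x,-\bar x\in M(\bar y)$. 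The injectivity of $w\mapsto-D_{\bar y}c(w,\bar y)$ on $M(\bar y)$, which is part of the twist condition of Assumption~\ref{A:twist} (and is a prerequisite for Assumption~\ref{A:DASM+}), then forces $x=-\bar x$, contradicting the standing assumption. Hence any minimizer of $\psi$ lies in $\{\bar x,-x\}$.

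To decide between these two candidates I will compute the values directly. Using $\dist(-\bar x,\bar x)=\dist(x,-x)=\pi$ on the unit sphere together with the isometry identity $\dist(-\bar x,-x)=\dist(\bar x,x)$, one obtains
\begin{align*}
\psi(\bar x)=f(\dist(x,\bar x))-f(\pi),\qquad
\psi(-x)=f(\pi)-f(\dist(x,\bar x)),
\end{align*}
so that
\begin{align*}
\psi(-x)-\psi(\bar x)=2\bigl[f(\pi)-f(\dist(x,\bar x))\bigr]>0,
\end{align*}
the strict inequality coming from the strict monotonicity of $f$ together with $\dist(x,\bar x)<\pi$ (equivalent to $x\ne-\bar x$). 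Therefore $\bar x$ is the unique global minimizer, and strict inequality for $\bar y\ne\bar x$ follows from the dichotomy above: any other minimizer would be either $-x$, excluded by the computation just made, or a smooth critical point, excluded by the twist argument.

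The only substantive obstacle is the twist-injectivity step ruling out smooth interior critical points; everything else is an explicit computation made possible by the especially simple structure of the round sphere, namely that the cut-locus of any point is a singleton and that $\dist(-w,\cdot)=\pi-\dist(w,\cdot)$.
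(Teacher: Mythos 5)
Your proof is correct, and it takes a genuinely different route from the paper's. The paper's own proof is a one-line invocation of Assumption~\ref{A:DASM+}: it chooses a $c$-segment $\{x(s)\}_{s\in[0,1]}$ with respect to $\bar x$ running along a great circle through $\pm\bar x$ with $x(0)=x(1)=-\bar x$ and $x(s_0)=x$, and then reads off the inequality $\bar m_{s_0}\le\max[\bar m_0,\bar m_1]=\bar m_0$ together with the equality characterization directly from ({\bf DASM$^+$}). Your argument instead never invokes ({\bf DASM$^+$}) at all: you reduce the claim to minimizing $\psi(\bar y)=c(x,\bar y)-c(-\bar x,\bar y)$, use only the twist injectivity of $w\mapsto -D_{\bar y}c(w,\bar y)$ on $M(\bar y)$ to rule out interior smooth critical points, and then settle the two remaining candidates $\bar y\in\{\bar x,-x\}$ by the explicit computation $\psi(-x)-\psi(\bar x)=2[f(\pi)-f(\dist(x,\bar x))]>0$. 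What this buys you is a more elementary and more self-contained derivation that isolates exactly which structure is needed: only the first-order (twist) condition plus the antipodal symmetry of the sphere, rather than the second-order sign condition encoded by ({\bf DASM$^+$}). The paper's route is shorter on the page but leans on the full machinery of $c$-segments and the sliding-mountain inequality; yours exposes that, for this particular lemma, the sharper hypothesis is not actually used. One small caveat, which you already flag implicitly, is that the smoothness of $\psi$ away from $\{-x,\bar x\}$ requires $c(w,\cdot)=f(\dist(w,\cdot))$ to be smooth across the diagonal $\bar y=w$, which is guaranteed in the paper's actual setting ($f$ smooth, even, and strongly convex) but is not literally forced by the phrase ``smooth strictly increasing $f:\R_+\to\R_+$'' in the lemma's statement; both your proof and the paper's tacitly use this.
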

\begin{proof}
For any $x, \bar x \in \S^n$, one can find a $c$-segment $x(s)$
with respect to $\bar x$ such that $x(0)=x(1)= -\bar x$ and
$x(s_0) = x$ for  some $s_0 \in [0,1]$. The inequality (together with the characterization of
the equality case) then
follows from ({\bf DASM$^+$}) for the function $\bar m_s (\cdot) = -
c(x(s), \cdot) + c(x(s), \bar x)$.
\end{proof}

For each $1\le i \le k$ and $z \in M$, let $M^i_{z}$ denote the $i$-th slice of $M$ through $z$, that is
\begin{align*}
 M^i_{z} := \{ x \in M \ | \ x^j =z^j \hbox{ for $j\ne i$}  \}.
\end{align*}
The following lemma generalizes the fact that on $M=\bar M=\S^n$ with $c=\dist^2/2$,
if $x \in \S^n$ and $-x \in \p^c\phi(x)$,
then $\p^c\phi(x)=\S^n$.

\begin{lemma}\label{L:phi on M i x 0}
Let $M,\bar M,c$ be as in Theorem~\ref{T:stay-away}.  Let $\phi$ be a
$c$-convex function on $M$. Fix $z = (z^1, \ldots, z^k) \in M=M^1
\times \ldots \times M^k$ and an open set $U$ with $z \in U$. Fix
$i \in \{1, \ldots, k\}$, and let $\bar z \in M$ with $\bar z^i
=-z^i$. The following holds:
\begin{itemize}
\item[(1)]  If $\bar z \in [\partial^c \phi(U)]_z$ (resp. $\bar z
\in \partial^c \phi (z)$),  then $M^i_{\bar z} \subset [\partial^c
\phi(U)]_z $ (resp. $M_{\bar z}^i \subset \partial^c \phi(z)$).
\item[(2)] Suppose $\bar z \in \partial^c \phi (z)$. Then, for
each $x \in M^i_{z}$,
$\phi(x) -\phi (z) = -c^i(x^i, - z^i) + c^i( z^i, - z^i)$. 
\end{itemize}
\end{lemma}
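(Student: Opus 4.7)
The plan is to deduce both assertions by reducing them to the one-sphere comparison provided by Lemma~\ref{L:whole image} applied to the factor $c^i$, exploiting the tensor-product form $c(x,\bar x)=\sum_j c^j(x^j,\bar x^j)$.

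To prove (i), I would fix an arbitrary $\bar y\in M^i_{\bar z}$, so that $\bar y^j=\bar z^j$ for $j\ne i$ while $\bar y^i\in M^i$ is free, and compare the ``support function'' $-c(\cdot,\bar y)+c(z,\bar y)$ with $-c(\cdot,\bar z)+c(z,\bar z)$. Since all terms indexed by $j\ne i$ cancel, a direct computation yields
$$
\bigl[-c(x,\bar y)+c(z,\bar y)\bigr]-\bigl[-c(x,\bar z)+c(z,\bar z)\bigr]=-c^i(x^i,\bar y^i)+c^i(z^i,\bar y^i)+c^i(x^i,-z^i)-c^i(z^i,-z^i).
$$
Applying Lemma~\ref{L:whole image} to $c^i$ at the antipodal pair $(-z^i,z^i)$ shows this difference is nonpositive. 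Combining with the inequality $\phi(x)-\phi(z)\ge -c(x,\bar z)+c(z,\bar z)$, which by hypothesis holds for all $x\in\partial U$ (resp.\ all $x\in M$) when $\bar z\in[\partial^c\phi(U)]_z$ (resp.\ $\bar z\in\partial^c\phi(z)$), then places $\bar y$ in the same set.

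For (ii), the lower bound $\phi(x)-\phi(z)\ge -c^i(x^i,-z^i)+c^i(z^i,-z^i)$ on $M^i_z$ is an immediate consequence of (i): every $\bar y\in M^i_{\bar z}$ lies in $\partial^c\phi(z)$, and selecting $\bar y$ with $\bar y^i=-z^i$ collapses all index-$j$ contributions with $j\ne i$ in the defining inequality of $\partial^c\phi$ (since $x^j=z^j$ on $M^i_z$). For the matching upper bound, I would use $c$-convexity in the form $\phi(x)=\sup_{\bar w}\{-c(x,\bar w)-\bar\phi(\bar w)\}$ together with the pointwise bound $-\bar\phi(\bar w)\le c(z,\bar w)+\phi(z)$, obtaining
$$
\phi(x)-\phi(z)\le\sup_{\bar w\in\bar M}\bigl\{-c(x,\bar w)+c(z,\bar w)\bigr\}.
$$
For $x\in M^i_z$ this supremum depends on $\bar w$ only through $\bar w^i$, and a second application of Lemma~\ref{L:whole image} identifies its value as $-c^i(x^i,-z^i)+c^i(z^i,-z^i)$, the supremum being attained at $\bar w^i=-z^i$. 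Matching the two bounds yields the claimed identity.

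The argument involves no genuine obstacle: the single nontrivial ingredient, namely the strict ({\bf DASM$^+$}) comparison at an antipodal point on one sphere, is already packaged in Lemma~\ref{L:whole image}, and the tensor product structure of $c$ reduces the full statement to cancellation of the indices $j\ne i$. The only bookkeeping point requiring care is applying Lemma~\ref{L:whole image} with the correct choice of arguments in both directions --- once to control the support function difference in (i) and once to evaluate the supremum over $\bar w^i$ in the upper bound of (ii).
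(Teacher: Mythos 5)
Your proof of part (1) is correct and is essentially the paper's argument: you compare the support functions of $\bar y$ and $\bar z$ through $z$, cancel the factors $j\ne i$ thanks to the tensor-product structure, and invoke Lemma~\ref{L:whole image} for $c^i$ at the antipodal pair $(z^i,-z^i)$ to conclude $-c(x,\bar y)+c(z,\bar y)\le -c(x,\bar z)+c(z,\bar z)$. Combined with the defining inequality of $[\partial^c\phi(U)]_z$ (resp.\ of $\partial^c\phi(z)$), this places $\bar y$ in the required set.

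For part (2) you take a genuinely different route from the paper. The paper invokes reciprocity (Lemma~\ref{L:reci}) to pass to the dual potential $\bar\phi$, applies part (1) to $\bar\phi$ to conclude $M^i_z\subset\partial^{\bar c}\bar\phi(\bar z)=S(\bar z)$, and then simply reads off the equality from the definition of the contact set. You instead establish matching lower and upper bounds: the lower bound is immediate from $\bar z\in\partial^c\phi(z)$ (your appeal to part (1) here is harmless but superfluous, since selecting $\bar y^i=-z^i$ in $M^i_{\bar z}$ just recovers $\bar z$); the upper bound exploits the envelope representation $\phi(x)=\sup_{\bar w}\{-c(x,\bar w)-\bar\phi(\bar w)\}$ and the elementary bound $-\bar\phi(\bar w)\le c(z,\bar w)+\phi(z)$ to obtain the sup estimate, which for $x\in M^i_z$ reduces to a one-variable supremum in $\bar w^i$ that Lemma~\ref{L:whole image} (at the same antipodal pair) identifies as $-c^i(x^i,-z^i)+c^i(z^i,-z^i)$. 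Both proofs are correct and rely on the same application of Lemma~\ref{L:whole image}; the paper's route is more structural and reuses part (1), whereas yours is more computational but avoids invoking reciprocity as a black box.
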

\begin{proof}
To prove (1) it is enough to observe that for $\bar x\in M^i_{\bar z}$ and $x \in M$,
\begin{align*}
&- c(x, \bar x) + c(z,  \bar x)\\
&= -c^i(x^i, \bar x^i) + c(z^i, \bar x^i) + \sum_{j\ne i}\big{[} -c^j(x^j, \bar z^j) + c^j (x^j, \bar z^j)\big{]}  \\
&\le -c^i(x^i, - z^i) + c(z^i, - z^i) + \sum_{j\ne i}\big{[} -c^j(x^j, \bar z^j) + c^j (z^j, \bar z^j)\big{]} \qquad \hbox{(by Lemma~\ref{L:whole image})}
\\&= - c(x, \bar z) + c(z,  \bar z) \qquad \hbox{(since $\bar z^i = -z^i$)}.
\end{align*}
 The last line is bounded from above by $\phi(x) - \phi(z)$ either if $x \in\partial U$ or $\bar z \in \p^c \phi (z)$.

Let us prove the (2). Suppose $\bar z \in \partial^c\phi (z)$.
 By duality  (Lemma~\ref{L:reci}), $z \in \partial^{\bar c} \bar \phi (\bar z)$ for the dual $\bar c$-convex function $\bar \phi$. Applying (1) to $\bar \phi$ we get
$M^i_z \in \partial^{\bar c}\bar \phi (\bar z)$, or equivalently
$M^i_z \subset S(\bar z)$. Therefore for all $x \in M^i_{z}$ we have
 \begin{align*}
  \phi(x) -\phi (z) &= -c(x, \bar z) + c( z, \bar z) \\&=
   -c^i(x^i, - z^i) + c^i( z^i, - z^i) \qquad \hbox{ (since $x^j=z^j$ for $j\ne i$)}
\end{align*}
which concludes the proof.
\end{proof}

Let $i \in \{ 1, \ldots,  b_0\} $, i.e., $M^i$ is a component of
$M'$. Recall that $x_0$ is the cut-exposed point defined in \eqref{x 0}. By definition of
$b_0$ in \eqref{E:y j in cut} and \eqref{E:y j not in cut}, there
exists $\bar y_i \in \partial^c\phi (x_0)$ such that $\bar
y_i^i=-x^i_0$. (If $i\le a_0$ then one can choose $\bar y_i = \bar
x_0$.) Let   $Z_{\e,\d,h}^i:=\pi^i \bigl(Z_{\e,\d,h} \cap M^i_{x_0}\bigr)$ for the canonical projection $\pi^i : M \to M^i$. Then
Lemma~\ref{L:phi on M i x 0}(2) implies
\begin{align}\label{E:Z e d h i}
Z^i_{\e,\d,h} = \{ x^i \in M^i \ | \
 -c^i(x^i, -x_0^i) + c^i(x^i_0, - x_0^i) + c^i(x^i, \bar x^i_{\e,\d}) - c^i(x_0^i, \bar x_{\e, \d}^i) \le h \}.
\end{align}
Here comes the main result of this section.
\begin{proposition}\label{P:|Z h i|}
There exist $\d_2=\d_2(\e) >0$ such
that, if $0< \d\le \d_2$, then there exists $h_2=h_2(\e,\d)$
such that
the set $Z_{\e,\d,h}^i$ satisfies the following estimates for $0<h\leq h_2$:
\begin{align*}
h^{\dim M^i} & \lesssim \e |Z_{\e,\d, h}^i | \qquad \hbox{if $1\le i \le a_0$,}\\
h^{\dim M^i} & \lesssim  |Z_{\e,\d, h}^i | \qquad \hbox{if $a_0 +1
\le i \le b_0$,}
\end{align*}
where $\lesssim$ is independent of $\e, \d$ and $h$ and $|Z_{\e,\d,h}^i|$ denotes the Riemannian volume in the submanifold  $M^i$.
\end{proposition}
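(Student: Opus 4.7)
My plan is to compute $Z_{\e,\d,h}^i$ in Riemannian normal coordinates at $x_0^i$. Write $x^i = \exp_{x_0^i}(v)$, $v \in T_{x_0^i} M^i$. Since for both ranges of $i$ there exists $\bar y_i \in \partial^c\phi(x_0)$ with $\bar y_i^i = -x_0^i$ (trivially with $\bar y_i = \bar x_0$ when $i \le a_0$, and by construction of $b_0$ when $a_0 < i \le b_0$), Lemma~\ref{L:phi on M i x 0}(2) gives $\phi(x)-\phi(x_0) = -c^i(x^i,-x_0^i) + c^i(x_0^i,-x_0^i)$ on $M^i_{x_0}$, making formula \eqref{E:Z e d h i} exact. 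The spherical identity $\dist(\exp_{x_0^i}(v), -x_0^i) = \pi r_i - |v|$ (valid on $\S_{r_i}^{n_i}$ for $|v|\le\pi r_i$) then rewrites the cone part of the defining function
\[
\psi_i(v) := -c^i(\exp_{x_0^i}(v), -x_0^i) + c^i(x_0^i,-x_0^i) + c^i(\exp_{x_0^i}(v), \bar x^i_{\e,\d}) - c^i(x_0^i, \bar x^i_{\e,\d})
\]
as $f^i(\pi r_i) - f^i(\pi r_i - |v|)$, which has a conical singularity at $v = 0$ with positive slope $(f^i)'(\pi r_i)>0$.

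For the exposed case $a_0 < i \le b_0$, I would use that $\bar x_{\e,\d}^i = \bar x_\d^i \to \bar x_0^i \neq -x_0^i$ as $\d \to 0$, so $c^i(\cdot, \bar x_\d^i)$ is smooth near $x_0^i$. Combining the equality on $M^i_{x_0}$ with the inequality $\phi(x) - \phi(x_0) \ge -c^i(x^i,\bar x_0^i) + c^i(x_0^i,\bar x_0^i)$ coming from $\bar x_0 \in \partial^c\phi(x_0)$, one sees that $v \mapsto c^i(\exp_{x_0^i}(v), \bar x_0^i) - f^i(\pi r_i - |v|)$ is minimized at $v = 0$. Taylor expanding then forces the cost gradient $A_i := D_{x^i}c^i(x_0^i, \bar x_0^i)$ to satisfy $|A_i| \le (f^i)'(\pi r_i)$. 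Since the $\d$-perturbation $c^i(\cdot, \bar x_\d^i) - c^i(\cdot, \bar x_0^i)$ is smooth in $(v,\d)$ and vanishes at $\d = 0$, it contributes $O(\d|v|)$ near $v=0$, hence $\psi_i(v) \le ((f^i)'(\pi r_i) + |A_i| + O(\d))|v| + O(|v|^2) \le C|v|$ for $|v|, \d$ small. Thus $\{\psi_i \le h\} \supset \{|v| \le h/C\}$, yielding $|Z_{\e,\d,h}^i| \gtrsim h^{n_i}$ once $h \le h_2(\e,\d)$ is small enough.

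For the cut-locus case $1 \le i \le a_0$, parameterize $\bar x_{\e,\d}^i = \bar x_\e^i = \exp_{x_0^i}((1-2\e)\pi r_i \hat v_0)$ for some unit $\hat v_0 \in T_{x_0^i}M^i$, so $\bar x_\e^i$ lies at spherical distance $2\pi r_i \e$ from $-x_0^i$. Splitting $v = t\hat v_0 + w$ with $w \perp \hat v_0$, the spherical law of cosines on $\S_{r_i}^{n_i}$ yields the clean identity
\[
\dist(\exp_{x_0^i}(v), \bar x_\e^i) = \pi r_i - \sqrt{(t + 2\pi r_i \e)^2 + |w|^2},
\]
valid near $v = 0$. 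Substituting and using two one-dimensional Taylor expansions exposes the anisotropy: along $v = t\hat v_0$ one finds
\[
\psi_i(t\hat v_0) = \bigl[(f^i)'(\pi r_i) - (f^i)'((1-2\e)\pi r_i)\bigr]\,t + O(t^2) = 2\pi r_i (f^i)''(\pi r_i)\,\e\,t + O(t^2 + \e^2 t),
\]
so the slope along $\hat v_0$ is of order $\e$; along $v = w \perp \hat v_0$ with $|w| \ll \e$, expanding $\sqrt{(2\pi r_i \e)^2 + |w|^2} = 2\pi r_i \e + O(|w|^2/\e)$ gives $\psi_i(w) = (f^i)'(\pi r_i)|w| + O(|w|^2/\e)$, so the slope in the perpendicular directions is of order $1$. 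Combining these, for $h \le h_2(\e,\d)$ small enough the sublevel set $\{\psi_i \le h\}$ contains an anisotropic box of dimensions $\sim (h/\e) \times h \times \cdots \times h$ around $v = 0$, proving $|Z_{\e,\d,h}^i| \gtrsim h^{n_i}/\e$.

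The hard part will be bookkeeping the higher-order remainders uniformly in $\e, \d, h$: I would need to choose $h_2(\e,\d)$ small enough to keep $h/\e$ in the regime where the Taylor expansion of $f^i$ about $\pi r_i$ dominates throughout $\{\psi_i \le h\}$, and simultaneously absorb the $O(\d|v|)$ perturbation in the exposed case. A further delicate point in the cut-locus case is justifying the box inclusion for mixed directions $v = t\hat v_0 + w$, where I must control cross-terms in the explicit formula above and handle the transition regime $|w| \sim \e$ in which the two asymptotic branches of $\sqrt{(2\pi r_i\e)^2+|w|^2}$ meet.
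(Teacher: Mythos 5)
Your approach is correct and parallels the paper's proof: the key observation in both cases is that, near $x_0^i$, the defining function of $Z^i_{\e,\d,h}$ decomposes into a cone of slope $(f^i)'(\pi r_i)$ (from $-c^i(\cdot,-x_0^i)$) competing against a smooth function whose gradient has length $(f^i)'(\pi r_i-2\pi r_i\e)$ in the cut-locus case (slope gap of order $\e$) or bounded away from $(f^i)'(\pi r_i)$ by the uniform convexity of $f^i$ in the exposed case (slope gap of order one). The paper implements this by passing to the $c$-exponential chart $q^i=-D_{\bar x^i}c^i(\cdot,x_0^i)$ and renormalizing by $h$, so that $W^i_h=h\hat W^i_h+q_0^i$ and $\hat W^i_h$ converges as $h\to0$ to a fixed cone-minus-linear sublevel set whose volume is read off directly; the delicate remainder bookkeeping that you rightly flag is thereby absorbed into the limiting statement ``$\lim_{h\to0}|\hat W^i_h|\gtrsim1/\e$, hence for $h\le h_2(\e,\d)$ sufficiently small\ldots''. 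Your normal-coordinate computation is the same argument unrolled by hand and would go through, at the cost of the explicit error tracking you anticipate.

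Two corrections are worth making. First, the formula $\dist(\exp_{x_0^i}(v),\bar x^i_\e)=\pi r_i-\sqrt{(t+2\pi r_i\e)^2+|w|^2}$ is not an identity from the spherical law of cosines: the exact relation is $\cos(d/r_i)=-\cos(|v|/r_i)\cos(2\pi\e)+\sin(|v|/r_i)\sin(2\pi\e)\,t/|v|$, and your expression is only its flat leading-order asymptotic as $|v|,\e\to0$. Treating it as exact understates the very remainder control you then identify as the hard part; this is precisely where the paper's rescaling pays off, since no such explicit approximation has to be written down or estimated. Second, your observation that $|A_i|\le(f^i)'(\pi r_i)$ in the exposed case is correct but superfluous for your argument: the upper bound $\psi_i(v)\le C|v|$ with $C$ uniform in $\d$ already follows from smoothness of $c^i(\cdot,\bar x^i_\d)$ near $x_0^i$ (for $\d$ small) together with boundedness of $(f^i)'$ near $\pi r_i$, with no need for the minimizing property at $v=0$. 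That minimizing property (equivalently, nonnegativity of $\psi_i$) is of course used, but only to guarantee that $\{\psi_i\le h\}$ is a neighborhood of $v=0$ rather than to bound $|A_i|$.
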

\begin{proof}
From \eqref{E:Z e d h i} and Lemma~\ref{L:whole image}
we have $Z_{\e,\d,h}^i \to \{x_0^i\}$ as $h\to 0$. Thus for sufficiently small $h$ we can
embed $Z^i_{\e,\d,h}$ into $\in T^*_{ x_0 ^i } M^i$ by $x^i
\mapsto q^i(x^i) = -D_{\bar x^i} c^i(x^i,  x_0^i)$. Let $W_h^i$ be
its image. Then
\begin{align*}
|W_h^i | \le \Bigl(\max_{\ x^i \in Z_{\e,\d,h}^i} \bigl|-D_{x^i}D_{\bar x^i}   c^i (x^i, x_0^i ) \bigr|\Bigr)\, |Z_{\e,\d,h}^i|\lesssim |Z_{\e,\d,h}^i|
\end{align*}
for $h$ sufficiently small.
In the following we bound $|W_h^i|$ from below.

 Without loss of generality, assume $M^i$ is the unit sphere.
 Let $q_0^i = q^i(x_0^i)$. By abuse of notation
use $c^i (q^i, \bar x^i)$ to denote $c^i(x^i(q^i), \bar x^i)$, and renormalize this cost function as
$$
c_h^i (q^i, \bar x^i) = \frac{1}{h} \big{[}c^i( h q^i + q_0^i, \bar x^i) -c^i(q_0^i, \bar x^i)\big{]}
$$
Then \eqref{E:Z e d h i} implies
 $W_h^i = h \hat W_h^{i} + q_0^i$, where
$$
\hat W_h^i := \{ q^i \in T^*_{x_0^i }  M^i \ | \  -c^i_h(q^i,-
x^i_0) + c_h^i(q^i, \bar x^i_{\e,\d})  \le 1 \}
$$
Recall $c^i=f^i (\dist_i)$ for some smooth nonnegative uniformly
convex function $f^i :\R_+\to \R_+$ such that $f^i(0)=0$, $\frac{df^i}{dt}(0)=0$.
Thus, as  $h \to 0$ the
renormalized cost $-c^i_h ( q^i, - x_0^i)$ converges to the
conical function
$$
q^i\mapsto \frac{d f^i}{dt}(\pi)|q^i|, \hbox{ for $q^i \in T^*_{ x^i_0}
M^i$}.
$$
(Here, we used $\dist_i(x_0^i, -x_0^i) =\pi$.)

{\em Case I:} If $1\leq i\le a_0$, then $\bar x^i_{\e,\d}=\bar x^{\cd\, i}_\e$, and so
$c^i_h (q, \bar x^i_{\e,\d})$ converges to the linear function
$$
q^i \mapsto D_qc^i(q_0^i, \bar x^{\cd \, i}_\e) \cdot q^i
$$
where
$$
|D_q c^i(q_0^i, \bar x^{\cd \, i}_{\e})| =\frac{d
f^i}{dt}(\pi- 2\pi\e)  \ge \frac{df^i}{dt}(\pi) -C\e
$$
for some constant $C>0$.
(Here, we used $\dist(x_0^i,  \bar x^{\cd \, i}_\e) =\pi- 2\pi\e$.)
Therefore in the limit $h \to 0$ one can easily check that
\begin{figure}[h]
\centerline{\epsfysize=1.5truein \epsfbox{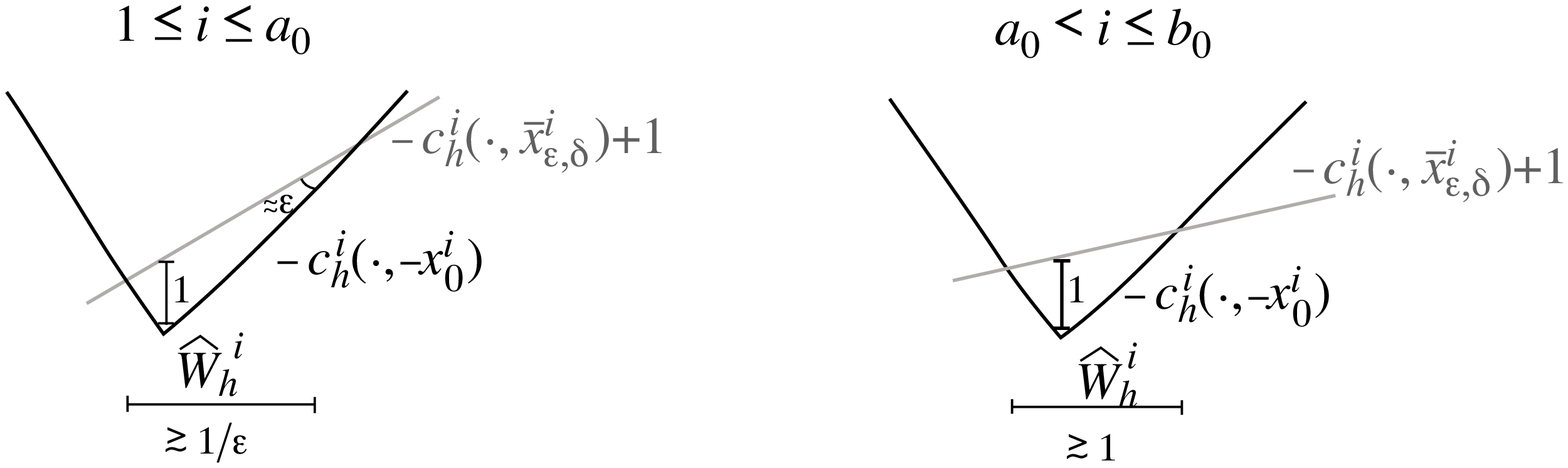}}
\caption{{\small If $1 \leq i \leq a_0$ then $-x_0^i=\bar x_0^i$ and $\dist_i(-x_0^i,\bar x^i_{\e,\d})\approx \e$,
so the size of the section is of order $1/\e$ (see also Figure 1). On the other hand, if $a_0<i\leq b_0$ then $-x_0^i \neq \bar x_0^i$,
which implies that $\dist_i(-x_0^i,\bar x^i_{\e,\d})$ is uniformly bounded away from $0$, and the size of the section is of order $1$.
}}
\end{figure}
$$
\lim_{h\to 0}|\hat W_h^i| \gtrsim  \frac{1}{\e},
$$
and thus for $h > 0$ sufficiently small
$$
|W_h^i| = h^{\dim M^i} |\hat W_h^i| \gtrsim \frac{h^{\dim
M^i}}{\e}.
$$

{\em Case II:} If $a_0 < i \le b_0$, then $\bar x_{\e,\d}^i=\bar x^{\cdd\,i}_\d$. Similarly as for the above case,   $c^i_h (q, \bar x^i_{\e,\d})$ converges to the linear function
$$
q^i \mapsto D_qc^i(q_0^i, \bar x^{\cdd \, i}_\d) \cdot q^i.
$$
Since $\bar x_\d^\cdd \in \bar M^\cdd (x_0^\cdd)$ for $\d >0$ small enough,
there exist positive constants $C,C_0$ such that
\begin{align*}
&|D_q c^i(q_0^i, \bar x^{\cdd \, i}_{\d})| \le
\frac{df^i}{dt}(\pi-C)  \leq \frac{df^i}{dt}(\pi)-C_0,
\end{align*}
where for the last inequality we used the uniform convexity of
$f$. From this one can check that $ \lim_{h\to 0}|\hat W_h^i|
\gtrsim 1$, and thus for sufficiently small $h>0$
$$\qquad |W_h^i| \gtrsim  h^{\dim M^i}.$$ This concludes the proof.
\end{proof}

\subsection{Proof of Theorem~\ref{T:right Alex}  (Alexandrov lower bound near cut-exposed point): analysis in
the regular component $M''$}\label{S:regular component} The main result of
this subsection is Proposition~\ref{P:tilde C}. Fix $0< \e <1$, and assume that
$\d$ and $\frac{h}{\d}$ are sufficiently small so that, as in
Proposition~\ref{P:regular in ''}, the set $Z_{\e,\d,h}$ is close
to the cut-exposed point $x_0$, and so in particular $Z_{\e,\d,h} \subset
M(\bar x_{\e,\d})$. Consider the change of coordinates $q \in T^*_{\bar
x_{\e,\d}} \bar M \mapsto x(q)  \in M(\bar x_{\e,\d})$ induced by the relation
\begin{align}\label{E:q e}
q = - D_{\bar x} c(x(q), \bar x_{\e,\d}),
\end{align}
and let $\tilde Z_{\e,\d,h} \in T^*_{\bar x_{\e,\d}} \bar M$ be the set $Z_{\e,\d,h}$ in this chart.
The function $\phi$ and the cost $c$ are transformed to
\begin{align*}
\varphi (q) = \phi (x(q) ) + c (x(q), \bar x_{\e,\d}),
\end{align*}
and
$$
\tilde c(q, \bar y) = c(x(q), \bar y)  - c(x(q), \bar x_{\e,\d})
\qquad \hbox{for $(q, \bar y) \in T^*_{\bar x_{\e,\d}} \bar M \times
\bar M$}.
$$
Notice that
$$
\tilde c(q, \bar x_{\e,\d}) \equiv 0,
$$
and $\varphi$ is a $\tilde c$-convex function on $T^*_{\bar x_{\e,\d}} \bar M$. Moreover
$$
\tilde Z_{\e, \d,h}=\{ q \in T^*_{\bar x_{\e,\d}} \bar M \ | \ \varphi(q) -\varphi(q_0) \le h \}
$$
where $q_0$ is the point corresponding to $x_0$ in this new chart.
It is important to recall that, thanks to  Assumption~\ref{A:convex DASM} ({\bf convex DASM}), $\tilde c$
and $\varphi$ are convex. (See Section~\ref{SS:coordinate})

We have the natural decomposition (with obvious notation)
\begin{align}\label{E:q' e}
q = (q' , q'') &= (-D_{\bar x'} c'(x'(q'), \bar x_{\e,\d}'), -D_{\bar x''} c''(x''(q''), \bar x_{\e,\d}''))\\\nonumber
 & \in T^*_{\bar x_{\e,\d}} \bar M = T^*_{\bar x_{\e,\d}'} \bar M' \times T^*_{\bar x_{\e,\d}''} \bar M''.
\end{align}
(Here, one should keep in mind that, by the definition of $\bar x_{\e,\d}$, the component $\bar x_{\e,\d}''$ in $M''$ does not depend on $\e$.) 
The modified cost $\tilde c(q, \bar y)$  has the decomposition
$$
\tilde c (q, \bar y)= \tilde c' (q', \bar y') + \tilde c '' (q'', \bar y'')
$$
where
$$
\tilde c' (q', \bar y') = c'(x'(q'), \bar y') - c'(x'(q'), \bar
x_{\e,\d}')\qquad \hbox{for $q' \in T^*_{\bar x_{\e,\d}'} \bar M'$,}
$$
and
$$
\tilde c''(q'', \bar y'') = c'(x''(q''), \bar y'') - c''(x''(q''),
\bar x_{\e,\d}'') \qquad\hbox{for $q'' \in T^*_{\bar x_{\e,\d}''} \bar M''$.}
$$
Let $\tilde \pi', \tilde \pi''$ denote the canonical projection
from $T^*_{\bar x_{\e,\d}} \bar M$ onto $T^*_{\bar x_{\e,\d}'} \bar M'$ and $T^*_{\bar x_{\e,\d}''} \bar M''$, respectively.

Now, let us construct a convex set $\tilde C \subset
T^*_{q_0}(T^*_{\bar x_{\e,\d}} \bar M)$ that we will use later to
estimate $|\partial^c \phi(Z_{\e,\d,h})|$ from below (see
Proposition~\ref{P:bound partial c by tilde C}).
The strategy of the proof follows the lines of the one of \cite[Proposition 7.6]{FKM}.

\begin{proposition}\label{P:tilde C}
Fix $0< \e <1$, and assume that $0<\d\leq \d_0$ and $0<h\leq \d^2$,
with $\d_0$ as in Proposition~\ref{P:regular in ''}. Then there exists a
convex set $\tilde C \in T^*_{q_0}(T^*_{\bar x_{\e,\d}} \bar M)$ satisfying
the following properties:
\begin{itemize}
\item[(1)] $\tilde C \subset \{0 \} \times T^*_{q_0''} (T^*_{\bar
x''_{\e,\d}} \bar M'') \subset T^*_{q_0'}(T^*_{\bar x_{\e,\d}'} \bar M')
\times T^*_{q_0''}(T^*_{\bar x_{\e,\d}''} \bar M'');$ 
\item[(2)] $ 
  \tcexp_{q_0} \tilde C = \{\bar z \in M | -\partial_q \tilde c (q_0, \bar z) \cap \tilde C \ne \emptyset \} \subset [\partial^{c} \phi (Z_{\e,\d,h})]_{x_0} \subset \partial^{c} \phi (Z_{\e,\d,h}),$  where $\partial_q$ denotes the subdifferential with respect to $q$ variable;
\item[(3)] $\Haus{n''} (\tilde C) \Haus{n''}(\tilde \pi''(\tilde
Z_{\e,\d,h})) \gtrsim h^{n''},$ where $\gtrsim$ is independent of
$h, \d, \e$.
\end{itemize}

\end{proposition}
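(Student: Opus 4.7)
The plan is to model the construction on the proof of \cite[Proposition~7.6]{FKM}, adapted so that $\tilde C$ lives entirely in the smooth $''$-subspace. In the $q$-coordinates of Section~\ref{SS:coordinate}, Assumption~\ref{A:convex DASM} (\textbf{convex DASM}) makes $\varphi$ and every function $-\tilde c(\cdot,\bar y)$ convex, so the sublevel set $\tilde Z_{\e,\d,h}=\{\varphi\le\varphi(q_0)+h\}$ is convex with $q_0$ in its interior, and its projection $W := \tilde\pi''(\tilde Z_{\e,\d,h}) \subset T^*_{\bar x''_{\e,\d}}\bar M''$ is a bounded convex set with $q_0''$ in its interior. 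By Proposition~\ref{P:regular in ''}, for $\d \le \d_0$ and $h \le \d^2$ the $''$-component of every element of $\partial^c\phi(Z_{\e,\d,h})$ stays uniformly away from the cut-locus of the $''$-components of points in $Z_{\e,\d,h}$, so $\tilde c''$ is smooth with non-degenerate mixed Hessian over the relevant range; by contrast, $-\tilde c'$ remains only convex, possibly non-smooth at $q_0'$ because of the cut-locus factors in $M'$.

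\textit{Construction of $\tilde C$ and properties (1), (3).} Apply John's Lemma (Lemma~\ref{L:John}) to $W$ to obtain an affine map $A$ and a John center $q_b'' \in W$ with $q_b'' + A(B_1) \subset W \subset q_b'' + A(B_{n''})$. Define
\[
\tilde C_0 := \bigl\{p'' \in T^*_{q_0''}(T^*_{\bar x''_{\e,\d}}\bar M'') : \langle p'', q''-q_b''\rangle \le h \ \text{for all}\ q'' \in W\bigr\},
\]
and set $\tilde C := \{0\} \times \tilde C_0$, a convex subset of $T^*_{q_0'}(T^*_{\bar x_{\e,\d}'}\bar M') \times T^*_{q_0''}(T^*_{\bar x_{\e,\d}''}\bar M'')$; then (1) is immediate. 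For (3), observe that $\tilde C_0 = h\,(W-q_b'')^\circ$, and John's lemma sandwiches this polar body between $(A^*)^{-1}(B_{1/n''})$ and $(A^*)^{-1}(B_1)$ up to a dimensional constant, yielding $\Haus{n''}(\tilde C_0) \gtrsim h^{n''}/|\det A|$; combining with $|\det A|\lesssim \Haus{n''}(W)$ from the outer inclusion produces $\Haus{n''}(\tilde C)\cdot \Haus{n''}(W) \gtrsim h^{n''}$.

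\textit{Proof of (2) and main obstacle.} Fix $\bar z \in \tcexp_{q_0}(\tilde C)$, so $(0,p'')\in -\partial_q \tilde c(q_0,\bar z)$ for some $p''\in\tilde C_0$. We must show $\bar z\in[\partial^c\phi(Z_{\e,\d,h})]_{x_0}$, which in $q$-coordinates reads
\[
\rho(q):=-\tilde c(q,\bar z)+\tilde c(q_0,\bar z)\le h \qquad \forall q\in\partial\tilde Z_{\e,\d,h}
\]
(equivalently on all of $\tilde Z_{\e,\d,h}$, by convexity of $\rho$ coming from (\textbf{convex DASM})). Decomposing $\rho=\rho'+\rho''$ along $\tilde c=\tilde c'+\tilde c''$: the smooth $''$-part admits a Taylor bound $\rho''(q) \le \langle p'', q''-q_0''\rangle + C|q''-q_0''|^2$, and by Proposition~\ref{P:d i h i} the set $\tilde Z_{\e,\d,h}$ shrinks to $\{x_0\}$ as $\d,\frac{h}{\d}\to 0$; the slack afforded by centering at the John point $q_b''$ (rather than $q_0''$) absorbs the quadratic remainder into the polar-body inequality $\langle p'', q''-q_b''\rangle \le h$, giving $\rho''\le h$ on $W$. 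For the $'$-part, the condition $0\in -\partial_{q'}\tilde c'(q_0',\bar z')$ forces $\bar z'$ into a cut-locus fiber of $x_0'$ compatible with the cut-exposed structure of Section~\ref{S:cut-exposed} (the components $\bar z^i$ being pinned by \eqref{E:y j in cut}--\eqref{E:y j not in cut} and the maximality of $a_0$); applying Lemma~\ref{L:phi on M i x 0} repeatedly, together with Lemma~\ref{L:whole image}, yields $\rho'\le 0$ throughout $M'$. Combining these two bounds delivers (2). The main obstacle is exactly this $'$-step: since $\rho'$ is convex but possibly non-smooth and a priori unbounded at $q_0'$, one must select $\bar z'$ compatible with the cut-locus so that $0$ becomes a subgradient, while ensuring $\rho'\le 0$ holds globally; this is why the delicate combinatorics of Section~\ref{S:cut-exposed} is needed, and why centering on the John point $q_b''$ (not the naive $q_0''$) is what lets the $''$-estimate close.
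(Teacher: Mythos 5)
Your construction of $\tilde C$ via a polar body is a genuinely different route from the paper's. The paper builds $\tilde C$ as the convex hull $\co(\hat p_1,\ldots,\hat p_{n''})$ of covectors obtained by a ``sliding mountain'' argument: touch $\varphi$ from below at the tangency points $q_i$, interpolate along the $\tilde c$-segment $\bar z_i(t)$ until the supporting function $m_{\bar z_i(\tau_i)}$ exactly matches $\varphi(q_0)$ at $q_0$, and then take $\hat p_i = -D_{q''}\tilde c''(q_0'',\bar z_i''(\tau_i))$. This guarantees by ({\bf DASM}) that $\bar z_i(\tau_i)\in[\partial^c\phi(Z_{\e,\d,h})]_{x_0}$ directly, and (2) then follows from the $c$-convexity of $[\partial^c\phi(Z_{\e,\d,h})]_{x_0}$ (Lemma~\ref{L:DASM for c-sub of a set}(1)). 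In other words, inclusion (2) is baked into the construction of $\tilde C$, rather than being checked afterwards.

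Your proposal, by contrast, defines $\tilde C$ first (independently of $\varphi$) and tries to verify (2) by a pointwise estimate $\rho\le h$ on $\tilde Z_{\e,\d,h}$. This is where the argument breaks. Two gaps:

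\emph{(i) The quadratic remainder in $\rho''$ is not controlled by $h$.} You expand
\[
\rho''(q'') \le \langle p'', q''-q_0''\rangle + C|q''-q_0''|^2
= \langle p'', q''-q_b''\rangle + \langle p'', q_b''-q_0''\rangle + C|q''-q_0''|^2.
\]
Polarity controls $\langle p'',q''-q_b''\rangle\le h$, but the John recentering introduces the extra term $\langle p'',q_b''-q_0''\rangle$, which is only bounded by $O(n''h)$ (in either sign) and so does not help; it makes the bookkeeping worse, not better. More seriously, the remainder $C|q''-q_0''|^2\le C\,\mathrm{diam}(W)^2$ is merely $o(1)$ as $\d,h/\d\to0$ by Proposition~\ref{P:d i h i}, and there is no estimate anywhere in the setup giving $\mathrm{diam}(W)^2\lesssim h$. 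Without that you cannot conclude $\rho''\le h$. The paper's construction circumvents this entirely because $\hat p_i$ is the \emph{exact} gradient of $m_{\bar z_i(\tau_i)}$ at $q_0''$ --- no Taylor remainder ever enters the inclusion (2); Lemma~\ref{L:c estimate} is used only to compare $\hat p_i$ with $p_i(\tau_i)$ multiplicatively for the volume estimate (3), which is a weaker need.

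\emph{(ii) The $\rho'\le 0$ step is inconsistent with convexity.} If $\bar z\in\tcexp_{q_0}\tilde C$ with $(0,p'')\in-\partial_q\tilde c(q_0,\bar z)$, then $0\in\partial_{q'}\rho'(q_0')$ and $\rho'(q_0')=0$. Since by ({\bf convex DASM}) the function $\rho'$ is convex in $q'$, this forces $\rho'\ge 0$ everywhere, not $\rho'\le 0$. The two are compatible only when $\rho'\equiv 0$, i.e.\ when $\bar z'=\bar x_{\e,\d}'$ --- but the set-valued definition of $\tcexp_{q_0}\tilde C$ in the statement of (2) admits other $\bar z'$ with $x_0'$-cut-locus components, and for those $\rho'$ is strictly positive somewhere. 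The appeal to Lemma~\ref{L:whole image} does not repair this: that lemma, applied to the $\cd$-components, yields inequality~\eqref{ineq: dasm dot}, which runs in the \emph{opposite} direction to what is needed here.

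Finally, note that your $\tilde C$ is $n''$-dimensional and contains $0$ in its interior, while the paper's is a simplex containing $0$ on its boundary; the latter choice is not cosmetic, since the proof of Proposition~\ref{P:bound partial c by tilde C} uses the image $\eta(\tilde C)$ as one of the sets in Lemma~\ref{lemma:mutiple orthogonal sections}, and that lemma needs each $S_i$ to share a common anchor point (the $s_i$'s form a simplex). So even if (2) were patched, the downstream use in Section~\ref{S:final argument} would need reworking. To make a polar-body construction work one would have to (a) use the subgradient of the actual supporting functions of $\varphi$ (not a polar body defined by geometry alone) so that ({\bf DASM}) can be invoked, and (b) obtain some uniform diameter-versus-$h$ estimate, which the paper deliberately avoids needing.
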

\begin{proof}
In the following, we first construct such a set $\tilde C$ and
then we show the desired properties. The set $\tilde C$ will be
given as a convex hull of certain covectors $\hat p_1, \ldots,
\hat p_{n''}$, see \eqref{eq:hat p i at q 0}. We go through
several steps.

First we find some auxiliary covectors $p_1, \ldots, p_{n''}$.
From Lemma~\ref{L:John} applied to the convex set
$\tilde \pi''(\tilde Z_{\e,\d,h})$, there is an ellipsoid $\tilde
E$ such that
\begin{align}\label{E:ellipse}
\tilde E \subset \tilde \pi''( \tilde Z_{\e,\d,h}) \subset n''  \tilde E
\end{align}
 where the scaling $n'' \tilde E$ is  with respect to the barycenter of the ellipsoid.
Let $p_i ''$, $ 1 \le i \le n''$, denote the unit orthogonal
covectors parallel to the axes of the ellipsoid $\tilde E$, and denote by
$a_i$ the length of the $i$-th principal axis of $\tilde
E$. Find hyperplanes $\Pi_i '' \subset  T^*_{\bar x''_{\e,\d} } \bar M''$ that
are orthogonal to $p_i''$ and touch tangentially the boundary
of $\tilde \pi''(\tilde Z_{\e,\d,h})$ at points $q_i''$,
$1 \le i \le n''$. Let $q''_0$ be the point in $T^*_{\bar
x''_{\e,\d}} \bar M''$ corresponding to $x_0$, and denote by $\ell_i$ the
distance from $q''_0$ to $\Pi_i ''$.
Then, thanks to \eqref{E:ellipse} we have
\begin{align}\label{eq:Haus tile Z h''}
\prod_i^{n''} \ell_i \le \prod_i^{n''} (2n'' a_i) \lesssim \Haus{n''}
(\tilde \pi''(\tilde Z_{\e,\d,h})).
\end{align}
For each $q_i''$, there exists $q_i' \in T^*_{\bar x_{\e,\d}'} \bar M'$
such that the hyperplane $\Pi_i:=T^*_{\bar x_{\e,\d}'} \bar M'\times \Pi_i''\subset
T^*_{\bar x_{\e,\d}} \bar M$ tangentially touches the boundary  $\partial \tilde
Z_{\e,\d,h}$ at the point $q_i = (q_i', q_i'')$.  Let $x_i = \cexpb_{\bar x_{\e,\d}} q_i$.
Since $p_i =(0,p_i'')$ is orthogonal to $\Pi_i$ and
$\tilde Z_{\e,\d,h}$ is a sublevel set of the convex function
$\varphi$, there exists a scalar multiple $t_i \in \R_+$ such that
$t_i p_i \in \partial \varphi (q_i)$.
 By  Assumption~\ref{A:c-exp} and Loeper's maximum principle ({\bf DASM})  (Lemma~\ref{L:DASM for c-sub}),
  the point $\bar z_i = \tcexp_{q_i} t_i p_i$ satisfies $\bar z_i \in \partial^{\tilde c} \varphi (q_i) = \partial^c \phi (x_i)$. Note that in fact,
 \begin{align*}
\bar z_i &= \tcexp_{q_i} t_i p_i = \cexp_{x_i } \eta (t_i p_i)
\end{align*}
  where $\eta$ is the affine map given by Lemma~\ref{L:eta shift} (in whose statement we replace $x_0$, $q_0$ and $\bar y_0$ with $x_i$, $q_i$ and $\bar x_{\e,\d}$, respectively).
Moreover, using the decomposition 
\begin{align*}
p_i = (0, p_i'')  \in T^*_{q_i'}(T^*_{\bar x_{\e,\d}'} \bar M') \times T^*_{q_i''}(T^*_{\bar x_{\e,\d}''} \bar M''),
\end{align*}
we see that the $\tilde c$-segment  (with respect to $q_i$)
$$
[0,1] \ni t \mapsto \bar z_i (t) =  \tcexp_{q_i} (1-t) t_i p_i
= \cexp_{x_i}\big{(}(1-t) \eta (t_i p_i)\big{)}
$$ from $\bar z_i (0) = \bar z_i$ to $\bar z_i (1) = \bar x_{\e,\d}$, is of the form
\begin{align*}
\bar z_i (t) = (\bar x'_{\e,\d}, \bar z_i''(t)) \in \bar M' \times \bar M''.
\end{align*}
Observe that by Proposition~\ref{P:regular in ''} and Assumption~\ref{A:domain convex}, we have 
\begin{align}\label{eq:bar z i '' stay away}
\bar z_i '' (t) \in \bar M'' (x''), \qquad \forall t \in [0,1], \forall x \in Z_{\e, \d, h} .
\end{align}


 We use these $\tilde c$-segments $\bar z_i(t)$ to define the points $\hat p_i$, $i=1, \cdots, n''$. 
Define
the function
$$
m_{\bar z_i (t)} (q):= -\tilde c( q , \bar z_i (t)) + \tilde c(q_i, \bar z_i (t)) + \varphi (q_i) .
$$
Clearly, $m_{\bar z_i(0)} \le \varphi$ and $m_{\bar z_i(1)} \equiv \varphi (q_i) =h+\varphi(q_0)$. By continuity there exists $\t_i \in [0, 1)$  such that
\begin{align*}
m_{\bar z_i (\tau_i)} (q_0 ) =\varphi(q_0).
\end{align*}
\begin{figure}[h]
\centerline{\epsfysize=1.5truein \epsfbox{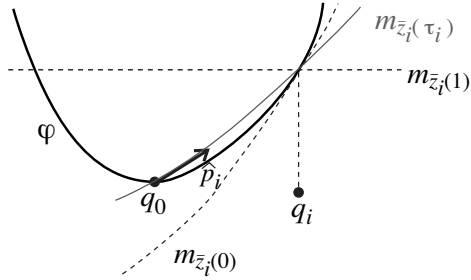}}
\caption{{\small
The supporting function $m_{\bar z_i(0)}=m_{\bar z_i}$ touches $\var$ at $q_i$ from below. By interpolating between $m_{\bar z_i}=m_{\bar z_i(0)}$ and $m_{\bar x_{\e,\d}}=m_{\bar z_i(1)}$
along the $\tilde c$-segment with respect to $q_i$, we can find  $\tau_i \in [0,1)$ such that $m_{\bar z_i(\tau_i)}(q_0)=\var(q_0)$.
Then the covector $\hat p_i$ used to construct $\tilde C$ is defined as 
 $\hat p_i := (0, -D_{q''} \tilde c'' (q_0 '',\bar z_i '' (\tau_i)) \in \partial m_{\bar z_i (\tau_i)} (q_0) $.}}
\end{figure}
Also, Loeper's maximum principle ({\bf DASM}) implies
\begin{align*}
m_{\bar z_i (\t_i)}  & \le \max[h + \varphi(q_0), \varphi ],
\end{align*}
so that in particular
\begin{align*}
m_{\bar z_i (\t_i)}& \le  \varphi 
\hbox{ on $\partial \tilde Z_{\e,\d,h}$,}
\end{align*}
 hence,
by the definition of $[\partial^c \phi (Z_{\e,\d,h})]_{x_0}$,
\begin{align}\label{eq:in c-sub wrt x 0}
\bar z_i (\t_i) \in [\partial^{\tilde c}\varphi (\tilde Z_{\e,\d,h})]_{q_0} = [\partial^c \phi (Z_{\e,\d,h})]_{x_0} \qquad \hbox{for every $i=1, \ldots, n''$}.
\end{align}
For later use, consider the nonzero vectors 
\begin{align}\nonumber
p_i (\t_i) & = (1-\t_i) t_i p_i\\\nonumber
& = (0, (1-\t_i) t_i p_i'')  \\\label{eq:p i tau i}
& = \big{(}0, -D_{q''} \tilde c''( q_i'', \bar z_i '' (\t_i))\big{)} \in T^*_{q_i'}(T^*_{\bar x_{\e,\d}'} \bar M') \times T^*_{q_i''}(T^*_{\bar x_{\e,\d}''} \bar M''), \qquad i = 1, \cdots, n''.
\end{align}
Clearly these vectors are all mutually orthogonal. Moreover, because
$$
p_i (\t_i)  \in \partial m_{\bar z_i (\t_i)} (q_i), \qquad i=1, \cdots, n'',
$$ 
 we have by the convexity of  $m_{\bar z_i (\t_i)}$,
\begin{equation}
\label{E:D i m i ge h / l i}
 |p_i (\t_i) |  \geq \frac{\varphi(q_i)-\varphi(q_0)}{\dist(q_0,\Pi_i)}=\frac{h}{\ell_i}.
\end{equation}
To finish the construction of $\tilde C$, let
 \begin{align}\label{eq:hat p i at q 0}
\hat p_i & := (0,  -D_{q''}\tilde c'' (q_0'',\bar z_i'' (\tau_i)) \\ \nonumber  & \in T^*_{q_0'}(T^*_{\bar x_{\e,\d}'} \bar M') \times T^*_{q_0''}(T^*_{\bar x_{\e,\d}''} \bar M''),  \qquad i=1,
\ldots, n''.
\end{align}
Notice that $\bar z_i (\t_i) =(x_{\e,\d}', \bar z_i(\t_i)'') = \tcexp_{q_0} \hat p_i$.
Let $ \tilde C = \co (\hat p_1 , \ldots, \hat p_{n''})$
 be the convex hull of $\hat p_1 , \ldots, \hat p_{n''}$. In the following, we will see that $\tilde C$ satisfies the desired properties (1), (2) and (3).
First, (1) follows immediately from  \eqref{eq:hat p i at q 0}, while
(2) is a direct consequence of \eqref{eq:in c-sub wrt
x 0} and  Lemma~\ref{L:DASM for c-sub of a set}.

Now, let us show (3).  By \eqref{eq:bar z i '' stay away}
each $\bar z_i(\t_i)''$ stays uniformly away (for small $\d, h$)  from the cut-locus of
 $\pi''(Z_{\e,\d,h})$. 
Hence we can apply Lemma~\ref{L:c estimate} to
 \eqref{eq:p i tau i} and \eqref{eq:hat p i at q 0}  to see that
$\hat p_i$ is close to  $p_i (\t_i)$ when we use the canonical identification $T^*_{q_i} (T^*_{x_{\e,\d}} \bar M) \approx T^*_{q_0} (T^*_{x_{\e,\d}} \bar M)$; more precisely,
$$
 \bigl|\hat p_i - p_i (\t_i)\bigr| \leq o_h(1)|\hat p_i|,
$$
where $o_h(1)$ is a quantity which goes to $0$ as $h \to 0$.
Since the vectors
 $\bigl\{  p_i (\t_i)  \bigr\}_{i=1,\ldots,n''}$ are all mutually orthogonal,
$\hat p_i$ are almost mutually orthogonal covectors, which by \eqref{E:D i m i ge h / l i} satify
$$
|\hat p_i | = |\hat p_i''|  \gtrsim \bigl| p_i (\t_i) \bigr| \ge \frac{h}{\ell_i}.
$$
(Here, for sufficiently small $\d$ and $\frac{h}{\d}$, the inequality
$\gtrsim$ and the almost orthogonality are independent of $\d$,
$h$ and $\e$.) This gives
$$
\Haus{n''} (\tilde C) \gtrsim \prod_{i=1}^{n''} \frac{h}{\ell_i}.
$$
This estimate combined with \eqref{eq:Haus tile Z h''} shows (3).
This completes the proof. 
\end{proof}

\subsection{Proof of Theorem~\ref{T:right Alex} (Alexandrov lower bound near cut-exposed point): final
argument }\label{S:final argument} In this section we finish the
proof of Theorem~\ref{T:right Alex}. Let $0< \e < 1$, and fix
$0<\d\leq \d_1(\e):=\min\{\d_0(\e),\d_2(\e)\}$ and $0<h\leq h_1(\e,\d):=\min\{\d^2,h_2(\e,\d)\}$,
with $\d_0(\e)$ and $\d_2(\e),h_2(\e,\d)$ as in
Proposition~\ref{P:regular in ''} and \ref{P:|Z h i|} respectively. The estimates
$\lesssim$, $\gtrsim$, $\approx$ in this section are all
independent of $\e,\d$ and $h$.

To make use of the results of previous sections, we need the
following comparison result:
\begin{proposition}\label{P:bound partial c by tilde C}
The set $\tilde C$ constructed in Proposition~\ref{P:tilde C} satisfies
\begin{align*}
  \Haus{n''} (\tilde C) \lesssim |\partial^c \phi (Z_{\e,\d,h})|.
\end{align*}
\end{proposition}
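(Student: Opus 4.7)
The strategy is to thicken the low-dimensional set $\tcexp_{q_0}(\tilde C)$ into an $n$-dimensional subset of $\partial^c\phi(Z_{\e,\d,h})$ whose Lebesgue measure dominates $\Haus{n''}(\tilde C)$. Indeed, by Proposition~\ref{P:tilde C}(1), $\tilde C$ sits inside the slice $\{0\}\times T^*_{q_0''}(T^*_{\bar x_{\e,\d}''}\bar M'')$, so $\tcexp_{q_0}(\tilde C)\subset\{\bar x_{\e,\d}'\}\times\bar M''$, which has zero $n$-volume; accordingly, the inequality cannot follow merely from Proposition~\ref{P:tilde C}(2), and one must recover the missing $n'$ dimensions from the $\bar M'$ factor.

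The plan is to construct an auxiliary convex set $\tilde C_{\rm fat}\subset T^*_{q_0'}(T^*_{\bar x_{\e,\d}'}\bar M')$ whose $n'$-dimensional Hausdorff measure is bounded below by a constant depending only on the manifold and $\l$ (uniformly in $\e,\d,h$), and such that $\tcexp_{q_0}$ carries $\tilde C_{\rm fat}\times\tilde C$ into $\partial^c\phi(Z_{\e,\d,h})$. The existence of $\tilde C_{\rm fat}$ will be driven by the cut-exposed structure of $x_0=(-\bar x_0^\cdot,x_0^{\cdot\cdot})$. For each antipodal component $i\le a_0$, Lemma~\ref{L:phi on M i x 0}(1) produces, for any touching point $x'\in Z_{\e,\d,h}$ of a $\bar y\in\tcexp_{q_0}(\tilde C)$, an entire $n_i$-dimensional slice $M^i_{\bar y}$ contained in $\partial^c\phi(x')$, at least after a small perturbation making the $i$-th coordinate of $\bar y$ antipodal to $(x')^i$. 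For each exposed component $a_0<i\le b_0$, the $i$-th coordinate of $\bar y$ stays uniformly away from the cut-locus of $(x')^i$ by the choice of $b_0$ in \eqref{E:y j not in cut}, so the openness of $\partial^c\phi$ supplies a uniform $n_i$-dimensional thickening.

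Once $\tilde C_{\rm fat}$ is in hand, the almost-orthogonality of the $T^*_{q_0'}$ and $T^*_{q_0''}$ directions together with Lemma~\ref{lemma:orthogonal sections} gives $\Haus{n}(\tilde C_{\rm fat}\times\tilde C)\gtrsim \Haus{n'}(\tilde C_{\rm fat})\cdot\Haus{n''}(\tilde C)$. By Proposition~\ref{P:regular in ''} the $\bar y''$-components stay uniformly away from the cut-locus, and by construction the $\bar y'$-components stay in a region where $\tcexp_{q_0}$ has Jacobian bounded above and below uniformly in the small parameters. Consequently
\[
|\partial^c\phi(Z_{\e,\d,h})|\ge \bigl|\tcexp_{q_0}(\tilde C_{\rm fat}\times\tilde C)\bigr|\gtrsim \Haus{n'}(\tilde C_{\rm fat})\cdot\Haus{n''}(\tilde C)\gtrsim \Haus{n''}(\tilde C),
\]
which is the desired estimate.

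The main obstacle will be the construction of $\tilde C_{\rm fat}$ with a uniform lower bound on $\Haus{n'}(\tilde C_{\rm fat})$ while keeping $\tcexp_{q_0}(\tilde C_{\rm fat}\times\tilde C)\subset\partial^c\phi(Z_{\e,\d,h})$. The antipodal-component argument requires propagating the cut-locus slice of Lemma~\ref{L:phi on M i x 0} across different touching points through Loeper's maximum principle, while the exposed-component argument requires quantifying the openness of $\partial^c\phi$ in terms of the ambient geometry alone; both steps are delicate because $\tilde C$ itself may degenerate as $h\to 0$, so the thickening must be extracted from structural features of $M$ rather than from $\tilde C$.
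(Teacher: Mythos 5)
Your strategy — fatten the $n''$-dimensional set $\tilde C$ into something of full dimension inside $\partial^c\phi(Z_{\e,\d,h})$ using the cut-locus structure of $x_0$ — is the right idea and is what the paper does, but your proposal has several genuine gaps where a correct argument is substituted by a claim that either is false or requires precisely the construction you are missing.

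First, the indices $a_0<i\le b_0$ are \emph{not} handled by ``openness of $\partial^c\phi$.'' The $c$-subdifferential is a closed graph, not open, and for a generic $\bar y$ in it there is no $n_i$-dimensional thickening available from openness. The actual mechanism, encoded in the definition of $b_0$ (\eqref{E:y j in cut}), is that for every $i\le b_0$ there exists $\bar y_i\in\partial^c\phi(x_0)$ with $\bar y_i^i=-x_0^i$; Lemma~\ref{L:phi on M i x 0}(1) then inserts the full slice $M^i_{\bar y_i}$ into $\partial^c\phi(x_0)$. The indices $a_0<i\le b_0$ and $i>b_0$ play opposite roles to what you describe: the condition \eqref{E:y j not in cut} you cite applies to $j>b_0$, which is the $M''$ block where $\tilde C$ already lives, not to $a_0<i\le b_0$.

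Second, for the antipodal block $i\le a_0$ you propose perturbing $\bar y\in\tcexp_{q_0}(\tilde C)$ so that its $i$-th coordinate becomes antipodal to $(x')^i$, where $x'$ is the touching point of $\bar y$. This is circular: $x'$ depends on $\bar y$, so the perturbation alters the touching point, and there is no guarantee the perturbed pair remains in the $c$-subdifferential. The paper sidesteps this entirely by working at the fixed base point $x_0$ and using $\bar x_{\e,\d}\in[\partial^c\phi(Z_{\e,\d,h})]_{x_0}$ (which holds by the very definition of $Z_{\e,\d,h}$), so Lemma~\ref{L:phi on M i x 0} applies with no perturbation.

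Third, and most seriously, your set $\tilde C_{\rm fat}\times\tilde C$ is a \emph{product}. To show $\tcexp_{q_0}(\tilde C_{\rm fat}\times\tilde C)\subset[\partial^c\phi(Z_{\e,\d,h})]_{x_0}$ via $c$-convexity you must check that \emph{every} extreme point $(p',\hat p_i)$, $p'$ ranging over $\tilde C_{\rm fat}$ and $\hat p_i$ over the extreme points of $\tilde C$, maps into this set. You give no mechanism for this simultaneous containment; the slicing of Lemma~\ref{L:phi on M i x 0} applied at $x_0$ gives sets anchored at $\bar y_i$ or $\bar x_{\e,\d}$, not at the $\bar z_i(\t_i)=\tcexp_{q_0}\hat p_i$. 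The paper instead forms the convex hull $\mathcal{A}=\co(C^1,\ldots,C^{b_0},\tilde C_0)$, where each $C^i$ and $\tilde C_0$ is shown individually to lie in the preimage, and then invokes the $c$-convexity of $[\partial^c\phi(Z_{\e,\d,h})]_{x_0}$ (Lemma~\ref{L:DASM for c-sub of a set}) to conclude. The volume lower bound then comes from Lemma~\ref{lemma:mutiple orthogonal sections}, whose hypotheses (mutually orthogonal anchoring directions) the $C^i$ and $\tilde C_0$ satisfy by construction; a plain product would not automatically.

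Fourth, you correctly flag the Jacobian degeneracy of $\tcexp_{q_0}$ near $q_0$ (equivalently, of $\cexp_{x_0}$ near $\eta_\e^\cd\to\partial M^{\cd*}(x_0^\cd)$ as $\e\to 0$), but your proposal does not actually resolve it: to keep the Jacobian bounded one must drag $\eta(\tilde C)$ into the uniform interior of $M^*(x_0)$, which the paper achieves with the cone-and-slice construction $\tilde C_0 := \co\big{(}(0,\eta_\d^\cdd),\eta(\tilde C)\big{)}\cap\{\eta_\e^\cd/2\}\times T^*_{x_0^\cdd}M^\cdd$, losing only a bounded factor of $\Haus{n''}$. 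This is the step that ``must be extracted from structural features of $M$,'' as you anticipate, but it is precisely the construction your proposal leaves unspecified.
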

Note that even with Proposition~\ref{P:tilde C} (2), this estimate is not obvious because $n'' < \dim M$.

\begin{proof}
For each $\e,\d,h$  as in Proposition~\ref{P:tilde C}, we will find an auxiliary set
$\mathcal{A}=\mathcal{A}_{\e,\d,h} \subset \mathcal{D}$ in a fixed
(thus independent of $\e,\d,h$) compact set $\mathcal{D}\subset
M^*(x_0) \subset T^*_{x_0} M$ such that
\begin{align}\label{E:A in partial c}
\cexp_{x_0} (\mathcal{A}) & \subset [\partial^c \phi
(Z_{\e,\d,h})]_{x_0} \subset \partial^c \phi
(Z_{\e,\d,h});\\\label{E:A bounded by tilde C} |\mathcal{A}| &
\gtrsim \Haus{n''}(\tilde C).
\end{align}
Once such a set is constructed, the desired estimate follows from
\begin{align*}
|\partial^c \phi (Z_{\e,\d,h}) | \ge |\cexp_{x_0} ( \mathcal{A} )|
\gtrsim  | \mathcal{A}| \qquad \hbox{(since $\mathcal{A} \subset
\mathcal{D}$)}.
\end{align*}
The construction of $\mathcal{A}$ goes through several steps.
First, apply to the set $\tilde C$ the (extended) map $p \in
T^*_{q_0}(T^*_{\bar x_{\e,\d}} \bar M) \mapsto  \eta(p) \in T^*_{x_0} M$
as in Lemma~\ref{L:eta shift} (with $\bar y_0 = \bar x_{\e,\d}$),
and let $\eta(\tilde C) \subset T^*_{x_0} M$ denote its image.
Notice that by Proposition~\ref{P:tilde C}(2)
$$
\cexp_{x_0} (\eta (\tilde C))  = \tcexp_{q_0} \tilde C \subset [\partial^c \phi(Z_{\e,\d,h})]_{x_0}.
$$
Let us compare $\Haus{n''} (\eta(\tilde C))$  with $\Haus{n''} (\tilde C)$.
For each $p= (0, p '') \in \tilde C$, Lemma~\ref{L:eta shift} applies as
$$
\eta (p)= \big{(} \eta_{\e, \d}' \ , \ p ''\bigl( -D_{x''} D_{\bar x''}
c'' (x_0'', \bar x''_{\e,\d})\bigr) + \eta_{\e,\d}''\big{)},
$$
where $\eta_{\e,\d}=-D_x c (x_0, \bar x_{\e,\d})$ (thus,
$\cexp_{x_0} (\eta_{\e,\d}) = \bar x_{\e,\d}$). Therefore
\begin{align*}
\eta(\tilde C)
\subset \{ \eta_{\e,\d}'\} \times  T^*_{x_0''}M
\end{align*}
and
\begin{align*}
\Haus{n''} (\eta(\tilde C))=  |\det D_{x''} D_{\bar x''} c'' (x_0'', \bar x''_{\e,\d})| \Haus{n''} (\tilde C)
\approx \Haus{n''} (\tilde C).
\end{align*}
Notice that  $\bar x_{\e,\d}''$ is independent of $\e$ (see \eqref{E: bar x e d})
and stays uniformly away from $\Cut (x_0'')$, so that the above estimate is
independent of $\e, \d$ and $h$.

We now use a convexity argument to construct $\mathcal{A}$. We
will first construct some suitable sets $C^1, \ldots, C^{b_0}$, and
$\tilde C_0$, inside a fixed compact set (independent of
$\e,\d,h$) in $ M^*(x_0)$, which satisfy the properties of the
sets $S_i$ in Lemma~\ref{lemma:mutiple orthogonal sections}. These
sets will also satisfy:
\begin{align*}
& \cexp_{x_0}(C^1) \cup \ldots \cup \cexp_{x_0}( C^{b_0}) \cup \cexp_{x_0}(\tilde C_0 ) \ \  \subset \ \ [\partial^c \phi (Z_{\e,\d,h})]_{x_0};\\
& \Haus{n_i}(C^i) \gtrsim 1, \qquad i=1, \ldots b_0;\\
& \Haus{n''}( \tilde C_0) \gtrsim \Haus{n''}(\eta(\tilde C)).
\end{align*}
Then $\mathcal{A}$ will be given as the convex hull of these sets, that
is $\mathcal{A} = \co(C^1, \ldots, C^{b_0},\tilde C_0 )$. By
convexity of $M^*(x_0)$, $\mathcal{A}$ will be in a fixed compact set,
say $\mathcal{D}$, independent of $\e,\d,h$, and the $c$-convexity
of $[\partial^c \phi (Z_{\e,\d,h})]_{x_0}$  (see Lemma~\ref{L:DASM for c-sub of a set}) will imply
$\cexp_{x_0}(\mathcal{A}) \subset [\partial^c \phi
(Z_{\e,\d,h})]_{x_0}$, showing \eqref{E:A in partial c}. We will then
apply Lemma~\ref{lemma:mutiple orthogonal sections} to get
$$|\mathcal{A}| \gtrsim \Haus{n''}(\eta(\tilde C))\approx
\Haus{n''}(\tilde C),$$ which gives \eqref{E:A bounded by tilde
C}. Hence we are let to construct  $C^1, \ldots, C^{b_0}, \tilde C_0$.

To construct $C^1, \ldots, C^{b_0}$, recall that $M'=M^1 \times \ldots \times M^{b_0}$,
and for every $i \in \{1, \ldots, b_0\}$ there exists $\bar y_i \in \partial^c \phi (x_0)$ with $\bar y_i^i =-x_0^i$. Moreover $M_{\bar y_i}^i \subset \partial^c \phi(x_0)$ by Lemma~\ref{L:phi on M i x 0}.
We further observe that the same inclusion holds for all the components $y_i^l$ of $y_i$ that satisfy $y_i^l=-x_0^l$. Hence, once $\bar y_i$ has a cut-locus component with $x_0$, then one can change such component arbitrarily, and the resulting point still remains inside $\partial^c \phi (x_0)$. 
Combining this fact with Loeper's maximum principle ({\bf DASM}) we can find a covector $v_i $ and a set $C^i \subset
\partial \phi (x_0) \subset T^*_{x_0} M$, with $v_i \in C^i$  whose components are either 
 $v_i^l = 0$ or $v_i^l \in M^{l*}(x_0^l)$, and
$$
C^i =\{ q \in T^*_{x_0} M \ | \ 2 q^i \in M^{i*}(x_0^i) \hbox{,
and } q^l =v_i^l \hbox{ for $l\ne i$ }\}.
$$
Clearly, $C^i$ is compact and $C^i \subset M^*(x_0)$. Moreover
$\cexp_{x_0} C^i \subset \partial^c \phi (x_0) \subset [\partial^c
\phi (x_0)]_{x_0} $ and $\Haus{n_i}(C^i) \gtrsim 1$. Also, observe
that the construction of $C^1, \ldots, C^{b_0}$ is
independent of $\e, \d, h$.

Let us now construct the set $\tilde C_0$. From Propositions~\ref{P:d
i h i} and \ref{P:regular in ''} we see that for $\d$ and $\frac{h}{\d}$
sufficiently small there exists a compact set $C'' \subset
\bar M''(x_0'')$ (independent of $\e,\d,h$) with $\pi''(\partial^c
\phi(Z_{\e,\d,h})) \subset C''$.  Recall the definition of $a_0$,
$b_0$, $\bar x_{\e,\d}=(\bar x_\e^\cd, \bar x_\d^\cdd)$, and that
$\bar x_\d^\cdd \in \bar M^\cdd(x_0^\cdd)$. Then we write
$\eta_{\e,\d}=(\eta_\e^\cd, \eta_\d^\cdd) \in T^*_{x_0^\cd}M^\cd
\times T_{x_0^\cdd}^* M^\cdd$ and we observe that $\eta_\d^\cdd$
is uniformly away from the boundary of $M^{\cdd *}
(x_0^\cdd)$. These facts imply that there exists a compact set
$C_2^\cdd \subset M^{\cdd*}(x_0^\cdd)$ (independent of $\e,\d,h$)
such that
$$ \eta (\tilde C) \subset \{\eta_{\e}^\cd\}\times C_2^\cdd.
$$
 However, $\eta_\e^\cd \to \partial M^{\cd*} (x_0^\cd)$ as $\e \to 0$, thus $\eta(\tilde C)$ is not kept in a fixed compact set in $M^*(x_0)$. In particular, we cannot take $\eta (\tilde C)$ for $\tilde C_0$, and this motivates the following:
Since $\bar x_\e^\cd = -x_0^\cd$ and $\bar x_{\e,\d} \in
[\partial^c\phi(Z_{\e,\d,h})]_{x_0}$, applying Lemma~\ref{L:phi on
M i x 0} as in the previous paragraph we see that
 the set $M^\cd \times \{ \bar x_\d^\cdd\}$, in particular,
 $(x_0^\cd, \bar
x_\d^\cdd)$ belongs to $ [\partial^c\phi(Z_{\e,\d,h})]_{x_0}$. This point
$(x_0^\cd, \bar x_\d^\cdd)$ corresponds to the covector
$(0,\eta_\d^\cdd)$. Consider the cone $\co\big{(}(0, \eta_\d^\cdd)\, ,
\, \eta(\tilde C) \big{)}$, and define $\tilde C_0$ as
$$
\tilde C_0 := \co\big{(}(0, \eta_\d^\cdd)\, , \, \eta(\tilde C)
\big{)} \cap \Big\{ \Big( \frac{\eta_\e^\cd}{2}, q^\cdd \Big) \in
T^*_{x_0}M \ | \ q^\cdd \in T^*_{x_0^\cdd}M^\cdd\Big\}.
$$
By a simple geometric argument
$$
\Haus{n''}(\tilde C_0) \gtrsim \Haus{n''}(\eta(\tilde C)),
$$
and moreover, since $\frac{\eta_\e^\cd}{2} \in  \frac{1}{2}M^{\cd*}(x_0^\cd)$,
the set $\tilde C_0$ is contained in a fixed
compact set  in $M^*(x_0)$  independently of $\e,\d,h$. By $c$-convexity of
$[\partial^c \phi (Z_{\e,\d,h})]_{x_0}$,
$$
\cexp_{x_0} (\tilde C_0) \subset [\partial^c \phi
(Z_{\e,\d,h})]_{x_0}.
$$
Note that by construction this set $\tilde C_0$, together with
$C^1, \ldots, C^{b_0}$, satisfy the property of the sets $S_i$ in
Lemma~\ref{lemma:mutiple orthogonal sections}. Furthermore they
are in a fixed compact set in $M^*(x_0)$ independent of $\e,\d,h$.
This completes the proof.
\end{proof}

Combining Propositions~\ref{P:bound partial c by tilde C} and
\ref{P:tilde C}(3) we obtain
\begin{align}\label{E:h n'' bound}
h^{n''} \lesssim \Haus{n''}(\pi''(\tilde Z_{\e,\d,h}))|\partial^c \phi (Z_{\e,\d,h})|.
\end{align}

We will finish the proof by applying Proposition~\ref{P:|Z h i|}. First, we need some preliminary steps.
 Use the notation given in Section~\ref{S:regular component}.
Let $Z_{\e,\d,h}'$ be the slice of $ Z_{\e,\d,h}$  in $M' \times \{ x''_0\}$, that is
$$
Z'_{\e, \d, h} := \{ x' \in M' \ | (x' , x''_0 ) \in Z_{\e,\d,h}  \}.
$$
Then $Z_{\e,\d,h}'$ is embedded via  $x' \mapsto -D_{\bar x'} c'(x',
\bar x'_{\e.\d})$ into $\tilde Z_{\e,\d,h}' \subset \bar M'^*(\bar
x_{\e,\d}')$, where
$$
\tilde Z'_{\e, \d, h} :=  \{ q' \in \bar M'^* (\bar x_{\e, \d}')  \ | \ (q', q_0'') \in \tilde
Z_{\e,\d,h}  \}.
$$
Embed in the same way each $Z_{\e,\d,h}^i$  (see \eqref{E:Z e d h i}), $i=1, \ldots b_0$,
into $\tilde Z_{\e,\d,h}^i  \subset \bar M^{i*} (\bar x_{\e,\d}^i).$
\begin{proposition}\label{P:Z h and tilde Z h'}
Assume that $0<\d\leq \d_0$ and $0<h\leq \d^2$,
with $\d_0$ as in Proposition~\ref{P:regular in ''}.
Then the following inequalities
hold:
$$
\Big(\min_{x' \in Z'_{\e,\d,h}} |\det (D_{x'}D_{\bar x'} c'(x',
\bar x_{\e,\d}'))| \Big)|Z'_{\e,\d,h}|  \le |\tilde Z'_{\e,\d,h} |
\le \Big(\max_{x' \in Z'_{\e,\d,h}} |\det (D_{x'}D_{\bar x'}
c'(x', \bar x_{\e,\d}'))| \Big)|Z'_{\e,\d,h}|,
$$
$$
\Big(\min_{x^i \in Z^i_{\e,\d,h}} |\det (D_{x^i}D_{\bar x^i}
c^i(x^i, \bar x_{\e,\d}^i))| \Big)|Z^i_{\e,\d,h}|  \le |\tilde
Z^i_{\e,\d,h} | \le \Big(\max_{x^i \in Z^i_{\e,\d,h}} |\det
(D_{x^i}D_{\bar x^i} c^i(x^i, \bar x_{\e,\d}^i))|
\Big)|Z^i_{\e,\d,h}|,
$$
where $|\cdot|$ denotes the Riemannian volume (in the appropriate submanifold).
\end{proposition}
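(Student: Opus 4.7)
The proposition is a routine change-of-variables estimate, so my plan is simply to connect the change of coordinates to the Jacobian formula in a few steps.

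Step one will be to check that the map $F:x'\mapsto -D_{\bar x'}c'(x',\bar x'_{\e,\d})$ is a smooth diffeomorphism from $Z'_{\e,\d,h}$ onto $\tilde Z'_{\e,\d,h}$ with a well-defined, non-vanishing Jacobian. The hypotheses $\d\le \d_0$ and $h\le \d^2$ place us in the regime covered by Proposition~\ref{P:regular in ''} and by the analysis of Section~\ref{S:analysis near cut-exposed}, which together imply that $Z_{\e,\d,h}\subset M(\bar x_{\e,\d})$, and in particular $Z'_{\e,\d,h}\subset M'(\bar x'_{\e,\d})$. Hence $c'(\cdot,\bar x'_{\e,\d})$ is smooth on $Z'_{\e,\d,h}$, and Assumption~\ref{A:twist} for the cost $c'$ tells us that $F$ is a smooth embedding of $M'(\bar x'_{\e,\d})$ onto $\bar M'^*(\bar x'_{\e,\d})$. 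By the very definition of $\tilde Z'_{\e,\d,h}$ we have $F(Z'_{\e,\d,h})=\tilde Z'_{\e,\d,h}$, and Remark~\ref{R:DbarDc nonsingular} gives the non-degeneracy of the mixed matrix $D_{x'}D_{\bar x'}c'(x',\bar x'_{\e,\d})$ throughout $Z'_{\e,\d,h}$.

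Step two is the change of variables itself. Working in orthonormal local frames at $x'\in M'$ and at $\bar x'_{\e,\d}\in \bar M'$, so that both the Riemannian volume on $M'$ and the metric-induced volume on the cotangent space $T^*_{\bar x'_{\e,\d}}\bar M'$ coincide with Lebesgue measure in the chosen coordinates, the differential $dF_{x'}:T_{x'}M'\to T^*_{\bar x'_{\e,\d}}\bar M'$ is represented by the matrix $-D_{x'}D_{\bar x'}c'(x',\bar x'_{\e,\d})$. Its Jacobian determinant is therefore $|\det(D_{x'}D_{\bar x'}c'(x',\bar x'_{\e,\d}))|$, and the standard change-of-variables formula yields
$$|\tilde Z'_{\e,\d,h}|=\int_{Z'_{\e,\d,h}}\bigl|\det\bigl(D_{x'}D_{\bar x'}c'(x',\bar x'_{\e,\d})\bigr)\bigr|\,d\vol_{M'}(x').$$
The first chain of inequalities in the statement then follows immediately by sandwiching the continuous integrand between its minimum and maximum on the compact set $Z'_{\e,\d,h}$.

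Step three is to observe that the same argument applies verbatim with the cost $c^i$ in place of $c'$ for each $i\in\{1,\ldots,b_0\}$: Proposition~\ref{P:|Z h i|} ensures that $Z^i_{\e,\d,h}$ is close to $\{x_0^i\}$ for small $h$ and in particular lies in $M^i(\bar x^i_{\e,\d})$, so the map $x^i\mapsto -D_{\bar x^i}c^i(x^i,\bar x^i_{\e,\d})$ is a diffeomorphism onto $\tilde Z^i_{\e,\d,h}$ there, and the same Jacobian computation produces the second pair of inequalities. There is no substantive obstacle here; the only point requiring a moment's care is the identification of volumes on the cotangent space, which orthonormal frames handle cleanly.
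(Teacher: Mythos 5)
Your proof is correct and follows essentially the same route as the paper's: the paper simply writes down $D_{x'}q' = -D_{x'}D_{\bar x'}c'(x'(q'),\bar x'_{\e,\d})$ and invokes the change-of-variables formula $|\tilde Z'_{\e,\d,h}|=\int_{Z'_{\e,\d,h}}|\det D_{x'}q'|\,dx'$, leaving the remaining details (smoothness of $F$, non-degeneracy of the Jacobian, the choice of frames, and the analogous computation for each $c^i$) implicit. You have spelled out exactly those implicit details; there is no substantive difference.
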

\begin{proof}
From \eqref{E:q e}
$$
D_{x'} q' = - D_{x'}D_{\bar x'} c'( x'(q'), \bar x_{\e,\d}'),
$$
and so the first inequality follows from
$$
|\tilde Z_{\e,\d,h}' | = \int_{Z_{\e,\d,h}'} |\det D_{x'} q' |
\,dx' .
$$
The proof of the second inequality is analogous.
\end{proof}
By convexity  and Lemma \ref{lemma:mutiple orthogonal
sections} one has
$$
\Haus{n'}(\tilde Z_{\e,\d,h}')\gtrsim \prod_{i=1}^{b_0}
\Haus{n_i}(\tilde Z_{\e,\d,h}^i),
$$
while Propositions~\ref{P:Z h and tilde Z h'} and \ref{P:|Z h i|}
imply
\begin{align*}
\prod_{i=1}^{b_0} \Haus{n_i}(\tilde Z_{\e,\d,h}^i)
& \ge  \prod_{i=1}^{b_0}\Big(\min_{x^i \in Z^i_{\e,\d,h}} |\det (D_{x^i}D_{\bar x^i} c^i(x^i, \bar x_{\e,\d}^i))|\Big)| Z_{\e,\d,h}^i|\\
& \gtrsim \bigg{[}\prod_{i=1}^{b_0}\Big(\min_{x^i \in
Z^i_{\e,\d,h}} |\det (D_{x^i}D_{\bar x^i} c^i(x^i, \bar
x_{\e,\d}^i))|\Big)\bigg{]} \frac{h^{n'}}{\e^{a_0}}
\end{align*}
Combining these estimates with \eqref{E:h n'' bound} we get
\begin{align*}
h^{n'+ n''} & \lesssim \e^{a_0}
\bigg{[}\prod_{i=1}^{b_0}\Big(\min_{x^i \in Z^i_{\e,\d,h}} |\det
(D_{x^i}D_{\bar x^i} c^i(x^i, \bar
x_{\e,\d}^i))|\Big)\bigg{]}^{-1}
 \Haus{n'}(\tilde Z_{\e,\d,h}') \, \Haus{n''}(\pi''(\tilde Z_{\e,\d,h}))|\partial^c \phi (Z_{\e,\d,h})|\\
& \lesssim \e^{a_0} \bigg{[}\prod_{i=1}^{b_0}\Big(\min_{x^i \in
Z^i_{\e,\d,h}} |\det (D_{x^i}D_{\bar x^i} c^i(x^i, \bar
x_{\e,\d}^i))|\Big)\bigg{]}^{-1}
|\tilde Z_{\e,\d,h}||\partial^c \phi (Z_{\e,\d,h})|
 \qquad \hbox{(by Lemma~\ref{lemma:orthogonal sections})}\\
& \lesssim \e^{a_0} |Z_{\e,\d,h}| |\partial^c \phi (Z_{\e,\d,h})|,
\end{align*}
where the last inequality follows from
$$
|\tilde Z_{\e,\d,h}| \lesssim  \Big(\max_{x \in Z_{\e,\d,h}} \det
(D_{x} D_{\bar x} c(x, \bar x_{\e,\d})) \Big) |Z_{\e,\d,h}|
$$
(see Proposition~\ref{P:Z h and tilde Z h'}) and
$$
\frac{\max_{x' \in Z'_{\e,\d,h}} \det (D_{x'}D_{\bar x'} c'(x',
\bar x_{\e,\d}')) }{\Big{[}\prod_{i=1}^{b_0}\Big(\min_{x^i \in
Z^i_{\e,\d,h}} |\det (D_{x^i}D_{\bar x^i} c^i(x^i, \bar
x_{\e,\d}^i)|\Big)\Big{]}} \lesssim 1\qquad \mbox{as $\d,\frac{h}{\d} \to 0$}
$$
(see Propositions~\ref{P:d i h i} and
\ref{P:regular in ''}). This concludes the proof of
Theorem~\ref{T:right Alex}, and Theorem~\ref{T:stay-away} is
proved.

\bibliographystyle{plain}

\end{document}